\def\0{\mathbf{0}}
\def\eps{\varepsilon}
\def\rr{\rightarrow}
\def\dr{\downarrow}
\def\beqa{\begin{eqnarray}}
\def\eeqa{\end{eqnarray}}
\def\beqas{\begin{eqnarray*}}
\def\eeqas{\end{eqnarray*}}
\newtheorem{theorem}{Theorem}[section]
\newtheorem{lemma}[theorem]{Lemma}
\newtheorem{proposition}[theorem]{Proposition}
\newtheorem{corollary}[theorem]{Corollary}
\newtheorem{remark}[theorem]{Remark}
\newtheorem{definition}[theorem]{Definition}
\numberwithin{equation}{section}
\newcommand{\old}[1]{{}}
\def\endpf{{\ \hfill\hbox{\vrule width1.0ex height1.0ex}\parfillskip 0pt}}
\newenvironment{proof}{\noindent{\bf Proof:}}{\endpf}
\newcommand{\qed}{\hfill\rule{2mm}{2mm}}
\newcommand{\bd}{\begin{displaymath}}
\newcommand{\ed}{\end{displaymath}}
\newcommand{\be}{\begin{equation}}
\newcommand{\ee}{\end{equation}}
\newcommand{\bq}{\begin{eqnarray}}
\newcommand{\eq}{\end{eqnarray}}
\newcommand{\bn}{\begin{eqnarray*}}
\newcommand{\en}{\end{eqnarray*}}
\newcommand{\dl}{\delta}
\newcommand{\re}{\mathds{R}}
\title{The Multifractal Nature of Volterra-L\'{e}vy Processes}
\author{Eyal Neuman   \\ \\ Faculty of Industrial Engineering \\ and Management  \\ Technion - Institute of Technology \\ Haifa 3200 \\ Israel }
\date{}
\begin{document}

\maketitle

\paragraph{Abstract.}
We consider the regularity of sample paths of Volterra-L\'{e}vy processes.
These processes are defined as stochastic integrals
$$ M(t)=\int_{0}^{t}F(t,r)dX(r), \   \ t \in \mathds{R}_{+}, $$
where $X$ is a L\'{e}vy process and $F$ is a deterministic real-valued function. We derive the spectrum of singularities and a result on the 2-microlocal frontier of $\{M(t)\}_{t\in [0,1]}$, under regularity assumptions on the function $F$.

\section{Introduction and main results}
\subsection{Volterra Processes} \label{Semimartingales}
A Volterra process is given by
\be \label{volterra}
 M(t)=\int_{0}^{t}F(t,r)dX(r), \   \ t \in \mathds{R}_{+},
\ee
where $\{X(t)\}_{t\geq0}$ is a semimartingale and $F(t,r)$ is a bounded deterministic real-valued function of two variables which sometimes is called a kernel.
One of the questions addressed in the research of Volterra and related processes is studying their regularity
properties.
It is also the  main goal of this paper. Before we describe our results let us give a short introduction to this area.
First, let us note that one-dimensional fractional processes, which are the close relative of Volterra processes,
have been extensively studied in the literature. One-dimensional fractional processes are usually defined by
\begin{eqnarray}
\label{frac:1}
X(t)&=& \int_{-\infty}^{\infty} F(t,r) dL(r),
\end{eqnarray}
where $L(r)$ is some stochastic process and $F(t,r)$ is some specific kernel. For example in the case of $L(r)$
being a two-sided standard Brownian motion and $F(t,r)=\frac{1}{\Gamma(H+1/2)}\left((t-s)^{H-1/2}_+ -(-s)^{H-1/2}_+\right)$, $X$ is called fractional Brownian motion with Hurst index $H$ (see  e.g. Chapter 1.2 of~\cite{oks08} and Chapter 8.2 of~\cite{nualart}). It is also known that the fractional Brownian motion with Hurst index $H$ is H\"older
continuous with any exponent less than $H$ (see e.g.~\cite{mandel.vanness}). Another prominent example is the case of the fractional $\alpha$-stable L\'{e}vy process which can be also defined via~(\ref{frac:1}) with $L(r)$ being a two-sided $\alpha$-stable L\'{e}vy process and
\bd
F(t,r)=a\{(t-r)^d_{+}-(-r)^d_{+}\}+b\{(t-r)^d_{-}-(-r)^d_{-}\}.
\ed
Takashima in~\cite{A8} studied the regularity properties of the sample paths of this process.
Another well-studied process is the so-called
fractional L\'{e}vy process, which again is defined via~(\ref{frac:1}) with the following kernel function
 \be \label{Kernel-frac-Levy}
F(t,r)=\frac{1}{\Gamma(d+1)}[(t-r)^d_{+}-(-r)^d_{+}],
\ee
and $L(r)$ being a two-sided L\'evy process.
For example,  Marquardt in \cite{A4} studied the case where $E[L(1)]=0$, $E[L(1)^2 ]<\infty$ and $L$ does not have a Brownian component. It was proved in \cite{A4}, that the sample paths of the
fractional L\'{e}vy process are $P$-a.s. local H\"{o}lder continuous of any order $\beta < d$. Later in \cite{MytnikNeuman}, it was proved that the sample paths of the fractional L\'{e}vy process are $P$-a.s. local H\"{o}lder continuous of order $d$.
The regularity properties of the analogous multidimensional processes have been also studied.
For example, consider the process
\be \label{MRT}
\hat{M}(t)=\int_{\mathds{R}^m}F(t,r)L(dr), \   \ t \in \mathds{R}^N,
\ee
where $L(dr)$ is some random measure and $F$ is a real valued function of two variables. A number of important results have been derived recently by Ayache, Roueff and Xiao in \cite{aya1}, \cite{aya2},
on the regularity properties of $\hat{M}(t)$ for some particular choices of $F$ and $L$.
As for the earlier work on the subject we can refer to K\^{o}no and Maejima in \cite{kono} and \cite{A6}.
We should also mention the book of Samorodnitsky and Taqqu \cite{A3} and the work of Marcus and Rosi\'{n}sky in \cite{marcus} where the regularity properties of processes related to $\hat{M}(t)$ in (\ref{MRT}) were also studied.

\subsection{Functions of Smooth Variation as Kernel Function of Volterra Processes} \label{smoothvar}
In this section we make our assumptions on the kernel function $F(s,r)$ in (\ref{volterra}). We review the notation and definitions from \cite{MytnikNeuman} that are relevant to this context.
First we introduce the following notation. \\\\
We denote 
\bd
F^{(n,m)}(s,r) \equiv \frac{\partial^{n+m} F(s,r) }{\partial s^{n}  \partial r^{m}}, \ \ \forall n,m=0,1,\ldots.
\ed
We also define the following sets in $\mathds{R}^2$:
\bd
E=\{(s,r):-\infty<r \leq s < \infty \},
\ed
\bd
\tilde{E}=\{(s,r):-\infty<r < s < \infty \}.
\ed
We denote by $K$ a compact set in $E$, $\tilde{E}$ or $\mathds{R}$, depending on the context.
We define the following spaces of functions that are essential for the definition of functions
of \emph{smooth variation}.
\begin{definition} \label{CE}
Let $\mathbb{C}_{+}^{(k)}(E)$ denote the space of functions $F$ from the domain $E$ in $\mathds{R}^2$ to $\mathds{R^1}$ satisfying  \\
1. $F$ is continuous on $E$;\\
2. $F$ has continuous partial derivatives of order $k$ on $\tilde{E}$. \\
3. $F$ is strictly positive on $\tilde{E}$.
\end{definition}
Note that functions of smooth variation of one variable have been studied extensively in the literature; \cite{reg} is the standard reference for these and related functions.
Here we generalize the definition of functions of smooth variation to functions on $\mathds{R}^2$.
\begin{definition} \label{smt.var2.0}
Let $F \in \mathbb{C}_{+}^{(k)}(E)$ satisfying for every compact set $K \subset \mathds{R}$ \\
a)
\bd
\lim_{h\downarrow 0}\sup_{ t \in K}\bigg| \frac{h F^{(0,1)}(t,t-h)}{F(t,t-h)}+\rho\bigg|=0,
\ed
b)
\bd
\lim_{h\downarrow 0}\sup_{t \in K}\bigg| \frac{h F^{(1,0)}(t+h,t)}{F(t+h,t)}-\rho\bigg|=0,
\ed
c)
\bd
\lim_{h\downarrow 0}\sup_{t \in K}\bigg| \frac{h^{j} F^{(j-1,1)}(t,t-h)}{F(t,t-h)}+\rho(\rho-1)...(\rho-j+1)\bigg|=0, \ \ j=2,...,k,
\ed
d)
\bd
\lim_{h\downarrow 0}\sup_{t \in K}\bigg| \frac{h^2 F^{(0,2)}(t,t-h)}{F(t,t-h)}-\rho(\rho-1)\bigg|=0.
\ed
Then $F$ is called a function of smooth variation of index $(\rho,k)$ at the diagonal and is denoted as
$f \in SR_\rho^k(0+)$.
\end{definition}
The trivial example for a function of smooth variation $SR_\rho^k(0+)$, for all $k\in\mathds{N}$, is $f(t,r)=(t-r)^{\rho}$.
Another example would be $f(t,r)=(t-r)^{\rho}|\log(t-r)|^{\eta}$ where $\eta\in \mathds{R}$. \\\\
In \cite{MytnikNeuman} the following results for the sample path properties of Volterra processes were proved.
\begin{theorem} \label{thm1}
Let $F(t,r)$ be a function of smooth variation of index $(d,2)$ and let $\{X(t)\}_{t\geq 0}$ be a semimartingale.
Denote $\Delta_{X}(s)=X(s)-X(s-)$. Define
\begin{displaymath}
M(t) =  \int_{0}^{t}F(t,r)dX(r), \ \ t\geq 0.
\end{displaymath}
Then,
\begin{itemize}
  \item [{\bf (a)}]
\begin{displaymath}
\lim_{h\downarrow 0}  \frac{M(s+h)-M(s)}{F(s+h,s)}=\Delta_{X}(s) , \ \ \forall s \in [0,1], \ \ P-\rm{a.s.},
\end{displaymath}
 \item [{\bf (b)}]
\begin{displaymath}
\lim_{h\downarrow 0} \ \  \sup_{0< s<t <1,\ \ |t-s|\leq h}\frac{|M(t)-M(s)|}{F(t,s)} =  \sup_{s\in [0,1]} |\Delta _{X}(s)|, \   \ P-\rm{a.s.}
\end{displaymath}
\end{itemize}
\end{theorem}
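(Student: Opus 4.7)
The plan is to decompose the increment $M(s+h)-M(s)$ into the leading jump contribution $F(s+h,s)\Delta_{X}(s)$ plus two error terms and to show that both error terms are $o(F(s+h,s))$ simultaneously for every $s\in[0,1]$ on a single set of full probability. Since condition (a) of Definition \ref{smt.var2.0} with $\rho=d>0$ forces $F(t,t)=0$, one may split
\begin{eqnarray*}
M(s+h) - M(s) = F(s+h,s)\Delta_{X}(s) + I_1(h,s) + I_2(h,s),
\end{eqnarray*}
where $I_1(h,s) := \int_{(s,s+h]} F(s+h,r)\,dX(r)$ captures the mass accrued on the ``new'' interval $(s,s+h]$ and $I_2(h,s):= \int_{[0,s)}[F(s+h,r)-F(s,r)]\,dX(r)$ measures the change of the kernel over the past.

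For $I_1$ I would use $F(s+h,s+h)=0$ to write $F(s+h,r)=-\int_r^{s+h}F^{(0,1)}(s+h,u)\,du$ and then apply a stochastic Fubini to obtain
\begin{eqnarray*}
I_1(h,s) = -\int_s^{s+h} F^{(0,1)}(s+h,u)\bigl[X(u)-X(s)\bigr]du.
\end{eqnarray*}
The compact-uniform condition (a) of Definition \ref{smt.var2.0}, together with a Potter-type estimate for regularly varying functions, gives $\int_0^h |F^{(0,1)}(s+h,s+h-w)|\,dw \leq C\,F(s+h,s)$ for $h$ small; hence $|I_1(h,s)|\leq C\,\eta_s(h)\,F(s+h,s)$ with $\eta_s(h):=\sup_{u\in(s,s+h]}|X(u)-X(s)|$. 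Right-continuity of the c\`adl\`ag path $X$, valid off a single null set, forces $\eta_s(h)\to 0$ for every $s$, so $I_1(h,s)=o(F(s+h,s))$ pointwise in $s$ on the good event.

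For $I_2$, use $F(s+h,r)-F(s,r)=\int_0^h F^{(1,0)}(s+v,r)\,dv$ for $r<s$ and apply stochastic Fubini again to get $I_2(h,s)=\int_0^h G(s+v,s)\,dv$ with $G(u,s):=\int_{[0,s)}F^{(1,0)}(u,r)\,dX(r)$. What is then needed is a uniform-in-$s$ bound of the form $h\,\sup_{v\in[0,h]}|G(s+v,s)|=o(F(s+h,s))$, which is where the second-order conditions (b), (c), (d) of Definition \ref{smt.var2.0} and the full hypothesis $F\in\mathbb{C}_{+}^{(2)}(E)$ come in: these let one replace $F^{(1,0)}(u,r)$ by its diagonal asymptotics and balance the resulting integral against $F(s+h,s)\sim h^d\ell(h)$ with $\ell$ slowly varying. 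I expect this to be the main obstacle, since the near-diagonal blow-up of $F^{(1,0)}$ must be matched against the scaling of $F(s+h,s)$, and because the resulting estimate must be uniform in $s\in[0,1]$.

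Part (b) then follows by combining the above. For the $\le$ inequality, apply the Step-1 decomposition with an arbitrary $t\in(s,s+h]$ in place of $s+h$ and take $\sup$, using uniform versions of the Step-2 and Step-3 estimates to conclude $\sup_{|t-s|\le h}|M(t)-M(s)|/F(t,s)\le\sup_s|\Delta_{X}(s)|+o(1)$. For the $\ge$ inequality, pick $s_n\in[0,1]$ with $|\Delta_{X}(s_n)|\uparrow\sup_s|\Delta_{X}(s)|$ and any $h_n\downarrow 0$; part (a) applied at $s_n$ with $t_n=s_n+h_n$ yields $|M(t_n)-M(s_n)|/F(t_n,s_n)\to|\Delta_{X}(s_n)|$, so the $\sup$ in (b) is eventually at least $|\Delta_{X}(s_n)|$, and therefore at least $\sup_s|\Delta_{X}(s)|$ after letting $n\to\infty$.
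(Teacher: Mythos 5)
This theorem is quoted from \cite{MytnikNeuman} and the present paper gives no proof of it, so I can only judge your argument on its own terms. Your decomposition and your treatment of $I_1$ are sound: after integration by parts the bound $|I_1(h,s)|\leq \eta_s(h)\int_0^h|F^{(0,1)}(s+h,s+h-w)|\,dw$ combined with the analogue of Lemma \ref{seq56tag2} does give $I_1=o(F(s+h,s))$, and your lower-bound argument for part (b) is fine (the supremum of $|\Delta_X|$ is attained since only finitely many jumps exceed any threshold). The problem is the term you yourself flag as the main obstacle. The bound you propose to establish for $I_2$, namely $h\,\sup_{v\in[0,h]}|G(s+v,s)|=o(F(s+h,s))$ with $G(u,s)=\int_{[0,s)}F^{(1,0)}(u,r)\,dX(r)$, is \emph{false} in general: as $v\downarrow 0$ the kernel $F^{(1,0)}(s+v,r)\sim d\,(s+v-r)^{d-1}$ blows up near $r=s$, and for an infinite-variation integrator the quantity $G(s+v,s)$ itself blows up as $v\downarrow 0$. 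For instance, if $X$ is Brownian motion (a legitimate semimartingale here) and $F(t,r)=(t-r)^d$ with $d\leq 1/2$, then $\mathrm{Var}\,G(s+v,s)\asymp v^{2d-1}\to\infty$, so $\sup_{v\in[0,h]}|G(s+v,s)|=\infty$; similarly for $\alpha$-stable $X$ with $d<1-1/\alpha$. What is actually true and needed is the integrated bound $\int_0^h|G(s+v,s)|\,dv=o(F(s+h,s))$, and even that cannot be obtained by ``replacing $F^{(1,0)}$ by its diagonal asymptotics'' alone, because $\int_0^s|F^{(1,0)}(u,r)|\,|dX|(r)$ may be infinite. One must first center the integrator, writing $I_2$ (after pathwise integration by parts) in terms of $X(r)-X(s-)$, absorb the boundary terms (which produce a harmless $X(s-)[F(s+h,0)-F(s,0)]=O(h)=o(h^d)$), and then exploit the left-limit $X(r)\to X(s-)$ to beat the integrable singularity $\int_0^s[(s-r)^{d-1}-(s+h-r)^{d-1}]\,dr$. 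This cancellation is the essential missing idea; without it the estimate genuinely fails, not merely lacks detail.

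Two further points. First, the cleaner route — and the one suggested by Lemma \ref{IntParts} of this paper — is to pass at the outset to the pathwise representation $M(t)=-\int_0^t F^{(0,1)}(t,r)X(r)\,dr$ and never form stochastic integrals against the exploding kernels $F^{(1,0)}(s+v,\cdot)$; this sidesteps both the ill-defined suprema above and the stochastic Fubini you invoke for $I_2$, whose hypotheses are not checked and are delicate precisely because of the singularity at $v=0$. Second, for the upper bound in part (b) your sketch hides a real issue: applying the triangle inequality to $F(t,s)\Delta_X(s)+I_1+I_2$ and bounding $|X(u)-X(s)|$ by the c\`adl\`ag modulus gives $2\sup_s|\Delta_X(s)|$, since the main term and the $I_1$ bound can each contribute a full jump. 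One must recombine them as $-\int_s^t F^{(0,1)}(t,u)[X(u)-X(s-)]\,du$ so that only $|X(u)-X(s-)|\leq \sup_s|\Delta_X(s)|+o(1)$ enters, and the $I_2$ estimate must then be made uniform in $s$, which your sketch does not address.
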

Part (a) of Theorem \ref{thm1} gives us information about the regularity of the sample paths of $M$. It also shows that at the point of jump $s$, the increment of the process behaves like $F(s+h,s)\Delta_X(s)$. Part (b) of Theorem  \ref{thm1} gives uniform in time bound on the increments of the process $M$.
 \\\\
We will use the results of Theorem \ref{thm1} throughout this work.
\subsection{Multifractal Analysis and 2-Microlocal Analysis} \label{Microlocal} \label{Multi}
We now discuss briefly the area of \emph{multifractal analysis} and \emph{2-microlocal analysis} of stochastic processes. In this section we give some necessary
definitions and results. Later on we will use the material of this section for analyzing the multifractal nature of
a particular Volterra process.\\\\
We begin with the definition of \emph{pointwise regularity} which is used for analyzing the regularity of
not necessarily differentiable sample paths.
\begin{definition} \label{pointwise} \textbf{Pointwise Regularity} \mbox{\boldmath$C^{l}(t_0)$}
[See e.g. Section 1 of \cite{jaffard2}]. Let $t\in \mathds{R}$ and let
$l$ be a positive real number. A function $g(t)$ is $C^{l}(t_{0})$ if there exists a constant $C>0$ and a
polynomial $P_{t_{0}}$ of degree at most $\lfloor l \rfloor$ such that in the neighborhood of $t_{0}$,
\bd
|g(t)-P_{t_{0}}(t)|\leq C|t-t_0|^l.
\ed
\end{definition}
The following number is called the H\"{o}lder exponent of $g$ at $t_0$:
\be \label{Holderexponent}
h_g(t_{0})=\sup\{l:g\in C^{l}(t_0)\}.
\ee
Multifractal analysis deals with the study of the sets $S^g_h$, that contain the points where a function $g$ has a
given H\"{o}lder exponent $h$. Denote by $d(h)$ the Hausdorff dimension of $S^g_h$.
The function $h\rightarrow d(h)$ is called the \emph{spectrum of singularities} of $g$. Functions with a non-constant
$d(h)$ are called \emph{multifractal functions}.\\\\
Multifractal functions were first introduced in \cite{frisch} in the area of physics called fully developed turbulence.
Later on the spectrum of singularities of various functions was studied (see \cite{jaffard2}, \cite{wave}, \cite{mendel} and references therein). \\\\
The spectrum of singularities has been also investigated for some random processes. Jaffard in \cite{jaffard} studied the multifractal nature of general L\'{e}vy processes (for the Brownian motion case see e.g. \cite{Csrgo}, \cite{Deh}, \cite{Steven}). Barral and L{\'e}vy V{\'e}hel studied  in \cite{Barral-Vehel2004} the spectrum of singularities for a class of additive processes with correlated non-stationary increments. The spectrum of singularities of L\'{e}vy processes in multifractal time was studied in \cite{Barral-Seuret2007}. The multifractal structure of super-Brownian motion was studied by Perkins and Taylor in \cite{Perkins98}. Finaly the multifractal spectrum for a class of superprocess with stable branching in one dimension was derived in \cite{Mytnik-Wachtel2012}.
In particular, the case of L\'{e}vy processes has attracted considerable interest.
Before we describe the results in \cite{jaffard}, we define the following parameter
\be \label{beta}
\beta=\inf\bigg \{ \gamma \geq 0 : \int_{|x|\leq 1} |x|^\gamma \pi(dx) <\infty\bigg\}, \ \ \beta'=\left \{ \begin{array}{ll}
\beta  \ \ \textrm{    if   } Q=0, & \\
2 \  \  \textrm{   if   } Q\not = 0.
\end{array} \right.
\ee
where $\pi(dx)$ is a L\'{e}vy measure. Clearly $\beta \in [0,2]$. Here $Q$ denotes the diffusion coefficient of the L\'{e}vy processes. In the case where $Q=0$, the L\'{e}vy processes have no Brownian component.      \\\\
Let
\bd
d_\beta(h)=
\left \{ \begin{array}{ll}
\beta h \ \ \textrm{    if   } h \in [0,\beta'], & \\
1 \  \  \textrm{  if    }  h =1/\beta', & \\
-\infty \  \  \textrm{  else;    }
\end{array} \right.
\ed
\be \label{Cj}
C_j=\int_{2^{-j-1}\leq |x|\leq 2^{-j}}\pi(dx).
\ee
Jaffard in \cite{jaffard} showed the following:
\begin{theorem} \textbf{[Theorem 1 in \cite{jaffard}]} \label{Thm-Jaffard}
Let $X(t)$ be a L\'{e}vy process of L\'{e}vy measure $\pi(dx)$ satisfying $\beta>0$ and
\be \label{sumcond}
\sum 2^{-j}\sqrt{C_j \log(1+C_j)}<\infty.
\ee
\begin{itemize}
  \item The spectrum of singularities of almost every sample path of $X(t)$ is $d_\beta(h)$.
  \item If $\beta = 0$ but $\pi(\mathds{R})=\infty$, then for each $h$, with probability 1, $d(h)=0$.
\end{itemize}
\end{theorem}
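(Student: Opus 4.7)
The natural approach is to exploit the Lévy--Itô decomposition $X(t)=at+\sqrt{Q}\,W(t)+J(t)$, where $J$ is the pure jump part, and to show that the singularity structure of $X$ is inherited entirely from $J$ (the drift is $C^\infty$, and when $Q\neq 0$ the Brownian component contributes the single exponent $h=1/2$, absorbed in the definition of $\beta'$). The core of the argument is therefore to determine $d(h)$ for the jump part. Here I would split $J$ at every dyadic scale $2^{-j}$ into jumps of modulus $\geq 2^{-j}$ (finitely many on any compact time interval, forming a compound Poisson process), jumps of modulus in $[2^{-j-1},2^{-j})$ (counted by the intensity $C_j$), and a compensated remainder $R_j$ coming from jumps of modulus $<2^{-j-1}$.

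The summability hypothesis $\sum 2^{-j}\sqrt{C_j\log(1+C_j)}<\infty$ is precisely what is needed to show that $R_j$ is uniformly small: by standard exponential/Kolmogorov maximal inequalities applied on the $2^j$ dyadic subintervals, one deduces $\sup_{t\in[0,1]}|R_j(t)|=O(2^{-j}\sqrt{C_j\log(1+C_j)})$ almost surely, and summing over $j$ yields an absolutely continuous (in fact Hölder-continuous of every exponent $<1$) residual contribution. Hence the pointwise Hölder exponent of $X$ at $t_0$ is determined by the distance from $t_0$ to the ``visible'' jumps of appropriate sizes, and the multifractal analysis reduces to a deterministic question about the $\limsup$ sets generated by a random Poisson point pattern in time--jump-size space.

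For the upper bound on $d(h)$, I would argue as follows: by the construction above and the definition of $\beta$, for each $h\in(0,1/\beta')$ the set $\{t_0:h_X(t_0)\geq h\}$ is contained in a $\limsup$ of balls $B(s_i,|\Delta X(s_i)|^{1/h})$ indexed by jumps. The Poisson intensity of jumps of size roughly $2^{-j}$ is $C_j$, so the total $s$-dimensional Hausdorff premeasure of the natural covering by balls of radius $2^{-j/h}$ is of order $\sum_j 2^{-js/h}C_j$, and this is finite for $s>\beta h$ because $\sum_j 2^{-\beta' j}C_j<\infty$ for any $\beta'>\beta$. This gives $d(h)\leq \beta h$ directly.

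For the matching lower bound, which is the main obstacle, I would use a ubiquity / mass transference argument in the spirit of Jaffard. The goal is to produce, for each $h\in(0,1/\beta')$, a set of dimension $\beta h$ of points $t_0$ that are simultaneously (i) well approximated by jumps: there exist infinitely many jumps at times $s_i$ with $|t_0-s_i|\leq|\Delta X(s_i)|^{1/h}$, forcing $h_X(t_0)\leq h$; and (ii) not too well approximated: $|t_0-s_i|\geq |\Delta X(s_i)|^{1/h+\epsilon}$ eventually, forcing $h_X(t_0)\geq h$. Step (i) is obtained from the second Borel--Cantelli lemma applied to the Poisson jump configuration, combined with a Cantor-type subset construction; the Hausdorff dimension $\beta h$ arises because at scale $2^{-j}$ the number of relevant jumps is $\asymp 2^{\beta j}$ and each contributes an interval of length $\asymp 2^{-j/h}$. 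Step (ii) is a Borel--Cantelli upper bound using the independence of the Poisson point process at disjoint scales. This is where the delicate balancing happens and where the bulk of the technical work lies; the compensated residual $R_j$ must also be shown not to shift the exponent, which again uses the summability assumption. The special values $h=1/\beta'$ (with $d=1$) and the endpoint at $h=\beta'$ are handled separately: the former by the global behaviour of $X$ on Lebesgue-typical points, the latter by the absence of jumps larger than the envelope predicts.

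Finally, the $\beta=0$, $\pi(\mathbb{R})=\infty$ case is essentially a degeneracy of the same argument: the upper-bound computation forces $d(h)\leq 0$ for every $h>0$, so any given $h$ is realised almost surely on at most a countable set, giving $d(h)=0$ for each $h$.
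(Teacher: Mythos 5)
First, a point of comparison: the paper does not prove this statement at all --- it is quoted verbatim as Theorem 1 of Jaffard \cite{jaffard}, so there is no in-paper proof to measure your attempt against. Your outline does, however, track the architecture of Jaffard's actual argument quite faithfully: the decomposition into independent compensated compound Poisson pieces $X^j$ with jumps in the dyadic annuli $\Gamma_j$, the use of the summability condition (\ref{sumcond}) to control the small-jump remainder via exponential maximal inequalities on dyadic subintervals, the covering of the singularity sets by the limsup sets $E_\delta$ of intervals centred at the jump times (exactly the sets the present paper recalls in Section 4), and a ubiquity-type argument for the lower bound. At the level of strategy the proposal is the right one, and you correctly identify the lower bound as the place where the real work lies.

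There is, however, one step that fails as written. In your upper bound you assert that $\{t_0 : h_X(t_0)\ge h\}$ is contained in the limsup of the balls $B(s_i,|\Delta X(s_i)|^{1/h})$. The inclusion goes the other way: proximity to infinitely many large jumps forces the H\"older exponent to be \emph{small}, so it is the set $\{t_0 : h_X(t_0)\le h\}$ (minus the jump times themselves) that is contained in $E_\alpha$ for every $\alpha<1/h$ --- this is Jaffard's Proposition 1, $S^X_{1/\delta}\subset\bigcap_{\alpha<\delta}E_\alpha$, which the present paper invokes as (\ref{funix}). As literally stated your inclusion is false: $\{h_X\ge h\}$ has full Lebesgue measure for $h<1/\beta'$ (Lebesgue-a.e.\ point has exponent $1/\beta'$), so it cannot sit inside a set of dimension $\beta h<1$. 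The covering computation you then perform, $\sum_j 2^{-js/h}C_j<\infty$ for $s>\beta h$, is the correct one and, applied to the correct set, does yield $\dim S_h\le\beta h$. A smaller point: in the $\beta=0$ case, $\dim S_h\le 0$ does not imply $S_h$ is countable (dimension-zero sets can be uncountable); the conclusion $d(h)=0$ follows directly from the dimension bound once one checks $S_h\neq\emptyset$, which is where $\pi(\mathbb{R})=\infty$ enters and which your sketch does not address.
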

\begin{remark}
Later in \cite{Balanca}, it was shown that assumption (\ref{sumcond}) in Theorem \ref{Thm-Jaffard} can be removed.
\end{remark}
Durand in \cite{durand} extended the result of Jaffard by describing the spectrum of singularities
of L\'{e}vy processes in a more general context.  \\\\
It was shown in \cite{Herbin-Levy2009} that multifractal analysis does not give a complete description of the local regularity in some cases.
It is also known that it lacks the stability needed under the transformation of pseudo-differential operators.
The way to overcome these problems is the so-called 2-microlocal analysis. In \cite{Bony}, \cite{Jaffard-Meyer1996} and \cite{Levy-Sueret2004} the 2-microlocal analysis was introduced in the study of deterministic functionals. A time domain characterization of the 2-microlocal spaces was introduced in \cite{Kolwankar-Levy2002} for deterministic functions.
Lately Herbin and L\'{e}vy-V\'{e}hel in \cite{Herbin-Levy2009} and Balan\c{c}a and Herbin in \cite{Balanca-Herbin2012}, developed a stochastic approach to $2$-microlocal analysis. In the rest of this section we give some necessary definitions and results from the field of 2-microlocal analysis. The definitions are taken from Sections 2 and 3 in \cite{Kolwankar-Levy2002} and from Section 2 in \cite{Balanca-Herbin2012}.
We begin definitions of some functional spaces, which are known as \emph{2-microlocal spaces}.
\paragraph{Notation.} Let $t_0\in \re$ and $h>0$. We denote by $B(t_0,h)$ a ball centered $t_0 $ with radius $h$. \\
Denote by $\dim(A)$ the Hausdorff dimension of a set $A\subseteq \mathds{R}$.
\begin{definition} \label{Def-micro-space}  \textbf{2-Microlocal Space} \mbox{\boldmath$C^{\sigma,s'}(t_0)$}
Let $t_0\in \re$, $s'\leq 0$ and $\sigma \in (0,1)$. A continuous function $g:\re\rr\re^d$ belongs to $C_{t_0}^{\sigma,s'}$ if there exist $C>0,\rho>0$ and a polynomial $P_{t_0}$ of degree at most $\lfloor \sigma -s'\rfloor$ such that for all $u,v\in B(t_0,h)$,
\bq \label{micro1}
|(g(u)-P(u))-(g(v)-P(v))|\leq C|u-v|^{\sigma}(|u-t_0|+|v-t_0|)^{-s'}.
\eq
\end{definition}
Note that in the case where $\sigma \in [0,1)$ and $\sigma-s'\in[0,1)$, then $P_{t_0}\equiv 0$ and (\ref{micro1}) becomes
\bn
|g(u)-g(v)|\leq C|u-v|^{\sigma}(|u-t_0|+|v-t_0|)^{-s'}, \ \forall u,v\in B(t_0,h).
\en
The \emph{2-microlocal frontier} of a function $g$ at $t_0$ is defined as the map $s'\mapsto\sigma_{g,t_0}^{(s')}$ such that
\bq\label{def-frontier}
\forall s'\leq 0, \ \sigma_{g,t_0}(s')=\sup\{\sigma\in [0,1): g\in C_{t_0}^{\sigma,s'}\}.
\eq
\paragraph{Notation.}
Let $C^{\alpha}(E)$ be the space of functions with global H\"{o}lder $\alpha$ on $E\subset \re$.    \medskip \\
Assume $g\in C^\eps(\re)$ for some $\eps>0$. Then by Proposition 3 in \cite{Levy-Sueret2004}, the H\"{o}lder exponent of $g$ at $t_0$ satisfies
\bq \label{holder-frontier}
h_g(t_0)=-\inf\{s',\sigma_{g,t_0}(s')\geq 0\},
\eq
with the convention that $h_g(t_0)=\infty$ if $\sigma_{g,t_0}(s')>0$ for all $s'$.  \medskip \\
In \cite{Balanca}, Balan\c{c}a studied the following sets
\bn \label{def-E-sigma-sets}
E_{h}=\{t_0\in S_h:\forall s'\leq 0, \sigma_{X,t}(s')=(h+s')\wedge 0 \}.
\en
Let $\mathcal{O}$ be the collection of all nonempty open sets of $\re$. It was shown in \cite{Balanca} that the sample paths of a L\'{e}vy process $X$ almost surely satisfy
\bq \label{res-balanca}
\forall V\in \mathcal{O},  \ \dim(\tilde E_h\cap V ) = d_{\beta}(h).
\eq
Note that (\ref{holder-frontier}) with (\ref{res-balanca}) generalizes the result in Theorem \ref{Thm-Jaffard} in the sense that assumption (\ref{sumcond}) can be removed.
Finally in \cite{Balanca}, the following linear fractional stable processes was studied
\be \label{lin-frac-stable}
M_t =  \int_{\re}\{(t-r)^{H-1/\alpha}_{+}-(-r)^{H-1/\alpha}_{+}\}M_{\alpha,\beta}(dr),
\ee
where $M_{\alpha,\beta}$ is an $\alpha$-stable measure with $\alpha\in [1,2)$ and $H\in(0,1)$.
Denote
\bq \label{def-E-sigma-sets2}
E_{\sigma,s'}=\{t_0\in\re_{+}:\forall u'>s', M_\cdot \in C_{t_0}^{\sigma,u'} \textrm{ and } \forall u'<s', M_\cdot \not \in C_{t_0}^{\sigma,u'}\}.
\eq
It was shown in \cite{Balanca} that the process $M$ satisfies almost surely for all $\sigma\in[H-1/\alpha-1,H-1/\alpha]$,
\bd
\forall V\in \mathcal{O},  \ (E_{\sigma,s'}\cap V ) =
\left \{ \begin{array}{ll}
\alpha(s-H)+1  \ \ \textrm{    if   } s \in [H-1/\alpha,H], & \\
-\infty, \  \  \textrm{  otherwise,    }
\end{array} \right.
\ed
where $s=\sigma - s'$.
\begin{remark} The proof of the result in \cite{Balanca} relies heavily on the following property (see Theorem 1.1 in \cite{Jaffard-Meyer1996}),
\bq \label{PSEODO}
\forall a>0, \ f\in C_t^{\sigma,s'} \Longleftrightarrow I^{a}f\in C_t^{\sigma+a,s'},
\eq
where $I^{a}f$ denotes the fractional integral of $f$ of order $a$.
\end{remark}
\subsection{Main Results} \label{TheoremsSection}
From now on we consider a semimartingale $\{X(t)\}_{t\geq 0}$ such that $X(0)=0$ $P$-a.s. Without
loss of generality we assume further that $X(0-)=0$, $P$-a.s. \\
Next we provide a detailed description of the pointwise regularity of Volterra processes.
Before we present our results, we will need the following definitions.
In what follows define
\be \label{slx}
S_l^X=\{t\in[0,1]:  \ h_X(t)=l\}.
\ee
We define the following space of functions which is a generalization of the 2-microlocal space introduced
in Definition \ref{Def-micro-space}.
\begin{definition} \label{Def-F-micro-space}  \textbf{\mbox{\boldmath$C_F^{d,s'}(t_0)$: 2-Microlocal Space with Gauge Function $F$} }
Let $t_0\in \re$, $d\in(0,1)$, $s'\leq 0$, and $F\in SR_d^{2}(0+)$. A continuous function $g:\re\rr\re^d$ belongs to $C_{F}^{d,s'}(t_0)$ if there exist $C>0,h>0$ and a polynomial $P_{t_0}$ at degree at most $\lfloor d -s'\rfloor$ such that,
\bq \label{micro}
|(g(u)-P(u))-(g(v)-P(v))|\leq C F(u,v)(|u-t_0|+|v-t_0|)^{-s'}, \  \ \forall  u,v\in B(t_0,h).
\eq
\end{definition}
Note that in the case where $d-s'\in[0,1)$, we have $P_{t_0}\equiv 0$ and (\ref{micro}) becomes
\bn
|g(u)-g(v)|\leq C F(u,v)(|u-t_0|+|v-t_0|)^{-s'}, \ \forall u,v\in B(t_0,h).
\en
We define the \emph{H\"{o}lder exponent with gauge function $F$} of $g$ at $t_0$ to be
\be \label{Def-F-Holderexponent}
\bar{h}^d_g(t_{0})=d-\inf\{s':g\in C_F^{d,s'}(t_0)\}.
\ee
Our goal is to find the Hausdorff dimension of the sets
\be \label{Sets-QM}
Q^{M}_{l,d}=\{t\in[0,1]:\bar{h}^d_ M(t_{0})=l+d\},
\ee
where $M$ is some Volterra process to be defined later. Derivation of the Hausdorff dimension of the sets $Q^{M}_{l,d}$ will be a generalization of the result in \cite{Balanca} where the sets in (\ref{def-E-sigma-sets2}) were studied.
The following theorem provides a lower bound on the H\"{o}lder exponent with gauge function $F$ of a Volterra process $M$ at a time $t$ when the semimartingale $X(t)$ has H\"{o}lder exponent $l$.
\begin{theorem} \label{Corollary- point1}
Let $d\in(0,1)$ and let $\{M(s)\}_{s\geq 0}$ be a Volterra process
\bd
M(s)=\int_0^sF(s,r)dX(r) , \  s \geq 0,
\ed
where $\{X(r)\}_{r\geq 0}$ is a semimartingale.  Let $l\geq 0$ and $F\in SR^{\lfloor d+l \rfloor +2}_d(0+)$. Then
\bd
\bar{h}^d_M(t) \geq l+d, \ \ \forall t\in S_l^X, \ \  P-\rm{a.s.}
\ed
\end{theorem}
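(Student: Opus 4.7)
The plan is to freeze the H\"older polynomial of $X$ at $t$, propagate the resulting decomposition through the integral that defines $M$, and estimate the residual directly. Fix $l'$ with $l-\eps<l'<l$ for a small $\eps>0$; by $h_X(t)=l$ one may choose a polynomial $P_{X,t}$ of degree at most $\lfloor l\rfloor$ and constants $C_0,\delta>0$ with
\bd
|Z(r)|\leq C_0|r-t|^{l'},\quad r\in[t-\delta,t+\delta],
\ed
where $Z(r):=X(r)-P_{X,t}(r)$. For $s$ in a smaller neighborhood of $t$ I split
\bd
M(s)=W(s)+V_P(s)+V_Z(s),
\ed
with $W(s)=\int_0^{t-\delta/2}F(s,r)dX(r)$, $V_P(s)=\int_{t-\delta/2}^s F(s,r)dP_{X,t}(r)$ and $V_Z(s)=\int_{t-\delta/2}^s F(s,r)dZ(r)$. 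The first two summands are deterministic and smooth in $s$, to be absorbed into the polynomial $P$ required by Definition \ref{Def-F-micro-space}; $V_Z$ is the genuine fluctuation, to be estimated directly.

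For the smooth pieces, the kernel $F(s,r)$ is $C^{\lfloor l+d\rfloor+2}$ in $s$ uniformly in $r\leq t-\delta/2$ (since $s-r\geq\delta/4$ is bounded away from the diagonal), so differentiating under the integral gives $W\in C^{\lfloor l+d\rfloor+2}$ near $t$. For $V_P$, since $P'_{X,t}$ is polynomial, the change of variable $u=s-r$ rewrites $V_P(s)=\int_0^{s-t+\delta/2}F(s,s-u)P'_{X,t}(s-u)du$; conditions (a)--(d) of Definition \ref{smt.var2.0}, together with the hypothesis $F\in SR^{\lfloor l+d\rfloor+2}_d(0+)$, imply that $F(s,s-u)$ factors as $u^d$ times a function with $\lfloor l+d\rfloor+1$ continuous $s$-derivatives uniformly in $u$, so $V_P\in C^{\lfloor l+d\rfloor+1}$ near $t$. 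Let $P$ be the Taylor polynomial of $W+V_P$ at $t$ of degree $\lfloor l+d\rfloor$; then $R:=W+V_P-P$ has $R^{(j)}(t)=0$ for $0\leq j\leq\lfloor l+d\rfloor$ and $|R^{(\lfloor l+d\rfloor+1)}|$ bounded on a neighborhood of $t$, giving $|R(u)-R(v)|\leq C|u-v|(|u-t|+|v-t|)^{\lfloor l+d\rfloor}$. Since $F(u,v)\geq c(u-v)^d$ by smooth variation and $|u-v|\leq|u-t|+|v-t|$, a short algebraic manipulation (splitting $|u-v|=|u-v|^d|u-v|^{1-d}$ and using $1-d\geq l'-\lfloor l+d\rfloor$) shows this is dominated by $CF(u,v)(|u-t|+|v-t|)^{l'}$, which is the required 2-microlocal estimate for $R$.

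For the fluctuation $V_Z$, take $v<u$ near $t$ and decompose
\bd
V_Z(u)-V_Z(v)=\int_v^u F(u,r)dZ(r)+\int_{t-\delta/2}^v[F(u,r)-F(v,r)]dZ(r).
\ed
Integration by parts on the first summand, using $F(u,u)=0$ (from $d>0$), produces $-F(u,v)Z(v)-\int_v^u Z(r-)F^{(0,1)}(u,r)dr$; together with $|Z(r)|\leq C_0(|u-t|+|v-t|)^{l'}$ on $[v,u]$ and $\int_v^u|F^{(0,1)}(u,r)|dr\sim F(u,v)$ from condition (a) of Definition \ref{smt.var2.0}, this is bounded by $CF(u,v)(|u-t|+|v-t|)^{l'}$. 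For the second summand I write $F(u,r)-F(v,r)=\int_v^u F^{(1,0)}(s,r)ds$, apply Fubini, and integrate by parts in $r$ against $dZ$ inside; the boundary contribution at $r=t-\delta/2$ is a smooth function of $s$ which, after Taylor expansion at $t$, can be folded into $P$ without disturbing the 2-microlocal estimate, while the interior term involves $Z(r-)F^{(1,1)}(s,r)$ and is controlled by condition (c) of Definition \ref{smt.var2.0} with $j=2$. Iterating this procedure at most $\lfloor l+d\rfloor$ times---each step producing a boundary term to be absorbed into $P$ and passing one more $s$-derivative onto $F$---yields, using the full smoothness $F\in SR^{\lfloor l+d\rfloor+2}_d(0+)$, the bound $|V_Z(u)-V_Z(v)|\leq CF(u,v)(|u-t|+|v-t|)^{l'}$. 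Combining the two estimates and letting $l'\uparrow l$ gives $M\in C_F^{d,s'}(t)$ for every $s'>-l$, hence $\bar h_M^d(t)\geq l+d$, $P$-a.s. The hardest part, I expect, is the iterated integration-by-parts bookkeeping: one must repeatedly extract the smooth-in-$s$ boundary contributions at $r=t-\delta/2$ and fold them into $P$, while verifying at every step that the smooth-variation bounds on $F^{(j-1,1)}$ still suffice to keep the interior term of the correct order. A secondary, purely deterministic, point is the proof that $V_P\in C^{\lfloor l+d\rfloor+1}$ at $s=t$ directly from the smooth-variation conditions, which is exactly why the "$+2$" appears in the hypothesis on $F$.
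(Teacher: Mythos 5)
Your overall architecture is sound and is, in spirit, the one the paper follows (separate the H\"older polynomial of $X$ at $t$, treat the far-from-$t$ and polynomial contributions as smooth functions to be folded into $P$, and estimate the genuine fluctuation against $F(u,v)(|u-t|+|v-t|)^{l-\eps}$ using the pointwise bound on $X-P_{X,t}$ and the smooth-variation asymptotics). But two steps, as written, are genuine gaps rather than routine verifications.

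First, the smoothness of $V_P$. You claim that conditions (a)--(d) of Definition \ref{smt.var2.0} imply $F(s,s-u)=u^d g(s,u)$ with $g$ having $\lfloor l+d\rfloor+1$ continuous $s$-derivatives uniformly in $u$. They do not. The definition is purely asymptotic and controls only the specific combinations $F^{(0,1)},F^{(1,0)},F^{(j-1,1)},F^{(0,2)}$ relative to $F$; the best one can extract for the diagonal profile is $\partial_s^j\bigl[F(s,s-u)\bigr]=o(u^{d-j-\eps})$, which blows up for $j\geq 2$, so nothing forces the profile to be even $C^2$ in $s$ uniformly as $u\downarrow 0$. (It happens to be constant in $s$ for the model examples $(t-r)^d$ and $(t-r)^d|\log(t-r)|^\eta$, which is probably why the claim looks plausible.) The fix is to never differentiate $F$ in $s$ near the diagonal for this term: integrate $\int F(s,r)P'_{X,t}(r)\,dr$ by parts in $r$ using $r$-antiderivatives $\tilde F^{(k)}$ chosen to vanish on the diagonal, which reduces $V_P(u)-V_P(v)$ to differences $\tilde F^{(k)}(u,c)-\tilde F^{(k)}(v,c)$ at a point $c$ bounded away from the diagonal, where Taylor expansion in the first variable is legitimate. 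This is exactly what the paper's Lemma \ref{BR12} does.

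Second, and more seriously, the tail term $\int_{t-\dl/2}^{v}[F(u,r)-F(v,r)]\,dZ(r)$ is where essentially all the work of the theorem lives, and your treatment of it is a sketch of a strategy rather than a proof. After your Fubini/integration-by-parts step the interior term is of the form $\int_v^u\int Z(r)F^{(k,1)}(s,r)\,dr\,ds$ with $|F^{(k,1)}(s,r)|$ of order $(s-r)^{d-k-1}$, and one must show that $\int |r-t|^{l-\eps}|F^{(k,1)}(s,r)|\,dr$, integrated over $s\in(v,u)$, is dominated by $F(u,v)(|u-t|+|v-t|)^{l-\eps}$; the competition between the singularity of $F^{(k,1)}$ at $r=s$ and the weight $|r-t|^{l-\eps}$ is delicate and does not follow from a single application of condition (c) with $j=2$. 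The paper's proof of the corresponding bound (Lemma \ref{BR4}) requires a Taylor expansion in the first variable to order $\lfloor l+d\rfloor+1$, a splitting of $[0,v]$ into three regions $[0,t-h]$, $[t-h,t-\dl]$, $[t-\dl,v]$, and separate integrability and monotonicity lemmas (Lemmas \ref{BR28}, \ref{lem-conv}, \ref{BR25}, \ref{BR26}) to control each piece. You correctly identify this bookkeeping as "the hardest part," but identifying it is not the same as carrying it out, and without it the claimed bound on $V_Z$ is unsupported. (A cosmetic point: $W(s)=\int_0^{t-\dl/2}F(s,r)\,dX(r)$ is random, not deterministic; it is, however, pathwise $C^{\lfloor l+d\rfloor+2}$ in $s$ near $t$ after an integration by parts in $r$, which is all you need.)
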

Whenever $X$ is a L\'{e}vy process one can deduce a result on the spectrum of
singularities for $M$ which in this case is called the Volterra-L\'{e}vy process.
\begin{theorem} \label{Theorem-Spect}
Let $\{X(t)\}_{t\geq 0}$ be a L\'{e}vy process without a Brownian component, of L\'{e}vy measure $\pi(dx)$ satisfying $\beta>0$.
Fix $d\in(0,1)$ and let $F\in SR^{\lfloor d+1/\beta \rfloor +2}_d(0+)$. Let $\{M(t)\}_{t\geq0}$ be a Volterra-L\'{e}vy process
\bd
M(t)=\int_0^tF(t,r)dX(r)\  \ \ ,t\geq 0.
\ed
Then
\bd
\dim (Q^{M}_{v,d}) = \beta v, \ \  \forall v  \in [0,1/\beta), \ \ P-\rm{a.s.}
\ed
\end{theorem}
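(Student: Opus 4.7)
The plan is to establish the pointwise identity $\bar{h}^d_M(t) = h_X(t) + d$ for every $t \in [0,1]$, $P$-a.s. Granted this, the set $Q^M_{v,d}$ coincides with the H\"older level set $S^X_v$ of the underlying L\'evy process, and the theorem reduces to Jaffard's Theorem \ref{Thm-Jaffard}, which yields $\dim(S^X_v) = d_\beta(v) = \beta v$ for $v$ in the advertised range (using that $\beta' = \beta$ since $X$ has no Brownian component).

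The lower bound $\bar{h}^d_M(t) \geq h_X(t) + d$ is essentially free: apply Theorem \ref{Corollary- point1} pointwise with $l = h_X(t)$. The hypothesis $F \in SR^{\lfloor d + 1/\beta \rfloor + 2}_d(0+)$ is calibrated so that the smoothness requirement of Theorem \ref{Corollary- point1} is satisfied for every value of $l$ that actually occurs as a H\"older exponent of $X$; by Theorem \ref{Thm-Jaffard} one has $h_X(t) \leq 1/\beta$ almost surely on $[0,1]$, so $\lfloor d+h_X(t)\rfloor+2 \leq \lfloor d+1/\beta\rfloor+2$.

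For the opposite inequality $\bar{h}^d_M(t) \leq h_X(t) + d$, the guiding principle is the pseudo-differential property (\ref{PSEODO}): since $F$ behaves to leading order like $(t-r)^d$ near the diagonal, the Volterra operator $X \mapsto M$ should shift the $2$-microlocal frontier by exactly $+d$. To implement this, I would first dispose of jump times $s$ of $X$: by Theorem \ref{thm1}(a), $M(s+h) - M(s) \sim F(s+h,s)\Delta_X(s)$, which immediately gives $\bar{h}^d_M(s) = d = h_X(s) + d$. For non-jump points $t_0$, I would adapt the fractional-differentiation argument of Balan\c{c}a in \cite{Balanca}: starting from the polynomial decomposition in Definition \ref{Def-F-micro-space}, integrate by parts against $F$ and its higher partial derivatives, whose diagonal asymptotics are controlled by conditions (a)--(d) of Definition \ref{smt.var2.0}, to reconstruct the local behavior of $X$ near $t_0$ with its $2$-microlocal frontier shifted down by $d$ relative to that of $M$.

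The main obstacle is this upper bound. Inverting the Volterra operator for a general function of smooth variation, rather than for the pure power $(t-r)^d$, requires careful control of the error terms caused by the merely asymptotic, rather than exact, power-law behavior of $F$ at the diagonal. The extra order of regularity demanded by $F \in SR^{\lfloor d+1/\beta\rfloor+2}_d(0+)$ is present precisely so that these error terms remain of lower order than the principal $(t-r)^d$ contribution throughout the inversion, which is what permits one to transfer the sharp $2$-microlocal information from $M$ back to $X$ and close the loop with Theorem \ref{Thm-Jaffard}.
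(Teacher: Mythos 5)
Your lower bound is the paper's: Theorem \ref{Corollary- point1} applied with $l=h_X(t)$. The genuine gap is in the converse inequality. You propose to prove $\bar{h}^d_M(t)\leq h_X(t)+d$ at \emph{every} $t$ by inverting the Volterra operator in the spirit of the pseudo-differential property (\ref{PSEODO}), but this is precisely the route the paper rules out in Remark \ref{remark}: for admissible kernels such as $F(t,r)=(t-r)^d|\log(t-r)|^{\eta}$ in (\ref{examp}) the integral transform is not even one-to-one, so there is no inverse to apply, and (\ref{PSEODO}) is a statement about the exact fractional integral $I^a$, not about a general kernel of smooth variation. As written, the "reconstruction of $X$ from $M$" is a plan rather than an argument, and it is a plan that cannot work uniformly over the class $SR_d^k(0+)$. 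A secondary issue: invoking Theorem \ref{Thm-Jaffard} for $\dim(S^X_v)$ smuggles in hypothesis (\ref{sumcond}), which Theorem \ref{Theorem-Spect} does not assume.

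The paper's actual proof avoids any inversion and never establishes the pointwise identity everywhere. For the dimension lower bound it shows $S^X_{1/\dl}\setminus S\subset\bigcap_{0<\alpha<\dl}E_\alpha$, where $E_\alpha$ are the limsup sets of shrinking intervals around the jump times of $X$, and at points of $E_\dl$ the upper bound $\bar h^d_M(t)\le 1/\dl+d$ comes directly from the \emph{jumps} of $M$: Theorem \ref{thm1}(a) gives $|M(r_n+u)-M(r_n)|\ge \tfrac{1}{2}|s_n|F(r_n+u,r_n)$ at nearby jump times $r_n$, and Lemma \ref{lemma-reg} (the analogue of Jaffard's oscillation lemma, with $F$ as gauge) converts this into Lemma \ref{lemma-dimE}; combined with Theorem \ref{Corollary- point1} this places $S^X_{1/\dl}\setminus S$ inside $Q^M_{1/\dl,d}$, whose dimension is then bounded below by Balan\c{c}a's $\dim(S^X_{1/\dl}\setminus S)=\beta/\dl$. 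For the dimension upper bound the paper argues by contraposition using only the \emph{lower} bound of Theorem \ref{Corollary- point1}: if $t_0\notin\bigcap_{0<\alpha<\dl}E_\alpha$ and $t_0\notin S$, then $h_X(t_0)>1/\dl$, hence $\bar h^d_M(t_0)>1/\dl+d$; therefore $\{t:\bar h^d_M(t)\le 1/\dl+d\}\setminus S\subset\bigcap_{0<\alpha<\dl}E_\alpha$, whose Hausdorff dimension is at most $\beta/\dl$ by Jaffard's covering estimate (\ref{n121}). To repair your proof you would need to replace the inversion step by this jump-oscillation argument on the sets $E_\alpha$ and run the dimension count there.
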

The definition of $\beta$ is given in (\ref{beta}).  \\\\
The following theorem provides us with the 2-microlocal frontier of the Volterra-L\'{e}vy process $M$ at a time $t$ when the L\'{e}vy process $X(t)$ has H\"{o}lder exponent $l$.
\begin{theorem} \label{point1}
Let $\{X(t)\}_{t\geq 0}$ be a L\'{e}vy process without a Brownian component, of L\'{e}vy measure $\pi(dx)$ satisfying $\beta>0$.
Fix $d\in(0,1)$ and let $F\in SR^{\lfloor d+1/\beta \rfloor +2}_d(0+)$. Let $\{M(t)\}_{t\geq0}$ be a Volterra-L\'{e}vy process
\bd
M(t)=\int_0^tF(t,r)dX(r)\  \ \ ,t\geq 0.
\ed
Then
\bd
\sigma_{M,t}(-l) = d, \ \ \forall t\in S_l^X, \ \  P-\rm{a.s.}
\ed
\end{theorem}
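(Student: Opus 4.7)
The plan is to prove both inequalities $\sigma_{M,t}(-l) \geq d$ and $\sigma_{M,t}(-l) \leq d$ for every $t \in S_l^X$, almost surely. The first is obtained by translating Theorem \ref{Corollary- point1} into the classical (non-gauged) 2-microlocal setting, while the second requires transferring a hypothetical excess regularity of $M$ back to $X$ and contradicting Balan\c{c}a's characterization (\ref{res-balanca}).

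For the lower bound, I would invoke Theorem \ref{Corollary- point1} to obtain $\bar h_M^d(t) \geq l + d$, which by (\ref{Def-F-Holderexponent}) means $M \in C_F^{d,s'}(t)$ for every $s' > -l$. Since $F \in SR_d^k$, writing $F(u,v) = (u-v)^d \ell(u-v)$ with $\ell$ slowly varying and applying Potter-type bounds gives $F(u,v) \leq C_\epsilon |u-v|^{d-\epsilon}$ near the diagonal for every $\epsilon > 0$. This yields the inclusion $C_F^{d,s'}(t) \subset C^{d-\epsilon,s'}(t)$ (the admissible polynomial transfers verbatim when $d + l$ is non-integer), and hence $\sigma_{M,t}(s') \geq d - \epsilon$ for all $s' > -l$ and $\epsilon > 0$. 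Because the 2-microlocal frontier is non-decreasing, concave and $1$-Lipschitz in $s'$, hence continuous, letting $\epsilon \downarrow 0$ and $s' \downarrow -l$ gives $\sigma_{M,t}(-l) \geq d$.

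For the upper bound, I would argue by contradiction. Suppose $\sigma_{M,t}(-l) > d$ at some $t \in S_l^X$, so that $M \in C_t^{d+\epsilon,-l}$ for some $\epsilon > 0$. Decompose
\[ M(t) = \Gamma(d+1)\, I^d X(t) + R(t), \qquad R(t) = \int_0^t [F(t,r) - (t-r)^d]\, dX(r),\]
using the integration-by-parts identity $\int_0^t (t-r)^d\, dX(r) = \Gamma(d+1)\, I^d X(t)$, which is legitimate since $X$ has no Brownian component. Granting for the moment that $\sigma_{R,t}(-l) > d$, the hypothesis on $M$ forces $I^d X \in C_t^{d+\epsilon',-l}$ for some $\epsilon' > 0$, and the pseudo-differential transfer (\ref{PSEODO}) applied with $a = -d$ yields $X \in C_t^{\epsilon',-l}$, i.e.\ $\sigma_{X,t}(-l) \geq \epsilon' > 0$. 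This contradicts Balan\c{c}a's result $\sigma_{X,t}(-l) = (l + (-l)) \wedge 0 = 0$ at $t \in S_l^X$ (restricted if needed to the full-dimensional subset $\tilde E_l$ from (\ref{res-balanca})).

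The main obstacle is establishing the improved regularity $\sigma_{R,t}(-l) > d$ of the residual Volterra process $R$. The $SR_d^k$ hypothesis only makes $F(t,r) - (t-r)^d$ asymptotically smaller than $(t-r)^d$ by a slowly varying factor rather than a polynomial one, so $R$ is not immediately recognizable as a fractional integral of order $d' > d$. The specific choice $k = \lfloor d + 1/\beta \rfloor + 2$ is tailored to provide enough controlled derivatives of $F$ to Taylor expand $F(t,r)$ around $(t-r)^d$ up to an order strictly above $d + 1/\beta \geq d + l$. One then either re-applies Theorem \ref{Corollary- point1} to the residual kernel in a quantitative way that captures this extra order, or redoes its proof adapted to extract an exponent strictly above $d$. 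This step, analogous in spirit to Balan\c{c}a's use of (\ref{PSEODO}) for the linear fractional stable process but carried out in the smooth-variation setting, is where the bulk of the technical work lies.
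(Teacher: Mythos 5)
Your lower bound is essentially the paper's own argument and is correct: Theorem \ref{Corollary- point1} (via Proposition \ref{propoint1}) combined with the Potter-type bound $F(u,v)\le C_\eps|u-v|^{d-\eps}$ from Lemma \ref{lemma-aux1}(a) and the elementary inequality (\ref{n124}) gives $M\in C^{d-2\eps,-l}_t$ for every $\eps>0$, hence $\sigma_{M,t}(-l)\ge d$.

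The upper bound, however, has a genuine gap, and it is exactly the one Remark \ref{remark} of the paper warns about. Your decomposition $M=\Gamma(d+1)I^dX+R$ hinges on showing $\sigma_{R,t}(-l)>d$, i.e.\ that the residual kernel $F(t,r)-(t-r)^d$ yields a process strictly more regular than order $d$. For a general $F\in SR_d^k(0+)$ this is false: with $F(t,r)=(t-r)^d|\log(t-r)|^\eta$, $\eta>0$, the residual kernel equals $(t-r)^d(|\log(t-r)|^\eta-1)$, which is of the same (in fact larger) order as $(t-r)^d$ near the diagonal, so $R$ carries all of the order-$d$ irregularity of $M$ and the transfer to $I^dX$ never starts. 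Smooth variation only pins $F$ down up to slowly varying factors, so no Taylor expansion around $(t-r)^d$ can push the residual to a strictly higher power; the differentiability order $\lfloor d+1/\beta\rfloor+2$ is used to control iterated $t$-derivatives of the kernel in Lemma \ref{BR4}, not to produce such an expansion. You flag this step as the main obstacle, but it is not merely technical: it is the reason the paper avoids the pseudo-differential route altogether. (A secondary issue: (\ref{res-balanca}) concerns the subsets $\tilde E_h\subset S_h$, so even granting the transfer you would only obtain the conclusion on $\tilde E_l$, not on all of $S_l^X$ as the theorem asserts.) The paper's actual upper bound is a direct oscillation argument: for $t\in S_l^X\setminus S$ with $l=1/\dl$, the inclusion (\ref{funix}) places $t$ in $E_{\dl-\eps}$, producing jump times $r_n\to t$ with jump sizes $s_n\asymp 2^{-n}$ and $|r_n-t|\le 2^{-\dl n}$; Theorem \ref{thm1}(a) converts each jump into an oscillation $|M(r_n+u)-M(r_n)|\ge\frac{1}{2}|s_n|F(r_n+u,r_n)$, Lemma \ref{lemma-reg} then gives $\bar h^d_M(t)\le l+d$, and Lemma \ref{lemma-aux1}(b) together with (\ref{n124}) turns this into $\sigma_{M,t}(-l)\le d$. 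Replacing your residual step with this local argument is the way to close the proof.
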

\begin{remark} \label{remark}
Note that the proofs of spectrum of singularities of integral operators like in Theorem 2 in \cite{Balanca} relies heavily on the fact that pseudo-differential operators are invertible. The proof of Theorem \ref{Theorem-Spect} does not rely on an inverse transform. In fact, in the case where
\be \label{examp}
 F(t,r)=(t-r)^{d}|\log(t-r)|^\eta, \  d\in(0,1), \  \eta>0,
\ee
the integral transform is not even one-to-one. We refer to Chapter 6, Section 34.2, subsection 32.6 in \cite{samko} for further discussion on this example.
\end{remark}
Let us survey a few applications and examples to the results in this section. Assume that $X$ is an $\alpha$-stable process with $\alpha\in (1,2)$ and
\bn
M(t)=\int_{\re}[(t-r)^{H-1/\alpha}_{+}-(-r)^{H-1/\alpha}_{+}]dX(r),  \ t\in \re_{+},
\en
where $H\in (1/\alpha,1)$. Note that $M$ is a linear fractional stable processes. From integration by parts (see for example Proposition 2 in \cite{Balanca}) and the decomposition in the proof of Theorem 1.6 in \cite{MytnikNeuman}, we immediately get
\bq \label{dec-frac-stable}
M(t)&=&C(H)\int_{\re}X(u)[(t-u)_{+}^{H-1/\alpha-1}-(-u)^{H-1/\alpha-1}_{+}]du   \\
&=& C(H)\int_{0}^{t}X(u)(t-u)^{H-1/\alpha-1}+ C(H)\int_{-\infty}^{0}X(u)[(t-u)^{H-1/\alpha-1}-(-u)^{H-1/\alpha-1}]du, \nonumber \\ 
&& \ \forall t\in(0,1),  \ P-\rm{a.s.} \nonumber
\eq
Now argue in the proof of Theorem 1.6 in \cite{MytnikNeuman}, that the regularity of $M$ is determined by the first term on the right hand side of (\ref{dec-frac-stable}). Recall the definition of $E_{\sigma,s'}$ in (\ref{def-E-sigma-sets2}). In this example we get that $Q^{M}_{v,H-1/\alpha}= E_{H-1/\alpha,-v}\cap [0,1], \ \forall v>0$, and the results of Theorem \ref{Theorem-Spect} coincide with the results of Theorem 2 in \cite{Balanca}. \medskip \\
Assume that $X$ is a L\'{e}vy process such that $E[X(1)]=0$, $E[X(1)^2 ]<\infty$, and that $X$ does not have a Brownian component. Define $F$ to be as in (\ref{Kernel-frac-Levy}). By the same argument that was used for the linear fractional stable processes, it is straightforward to show that the results of Theorem \ref{Theorem-Spect} coincide with Proposition 3 in \cite{Balanca}. \medskip \\
Finally, let $M$ be a Volterra process where $X$ is an $\alpha$-stale L\'{e}vy process where $\alpha\in(0,2)$ and $F$ as in (\ref{examp}). This type of process is a special case of the moving average $\alpha$-stable processes that were studied in \cite{A3}. Note that in this example $M\in C_F^{d,s'}(t)$, means that there exists $C(\omega)>0,h(\omega)>0$ and a polynomial $P_{t_0}(\omega)$ at degree at most $\lfloor d -s'\rfloor$ such that,
\bn
|(M(u)-P(u))-(M(v)-P(v))|\leq C (u-v)^{d}|\log(u-v)|^\eta(|u-t_0|+|v-t_0|)^{-s'}, \  \ \forall  u,v\in B(t_0,h).
\en
From Theorem \ref{Theorem-Spect} follows
\bn
\dim\{t\in[0,1]:\bar{h}^d_ M(t_{0})=v+d\}=\alpha v, \ \  \forall v  \in [0,1/\alpha), \ \ P-\rm{a.s.}
\en
Recall that by remark \ref{remark}, we cannot use the fact that the integral transform in (\ref{volterra}) is invertible, and we need to use the methods which were developed in this work.
Note that similar results can be derived for the moving average L\'{e}vy processes that was studied in \cite{A4}, for the special case where $F$ is defined by (\ref{examp}). We also notice from the theorems and the examples in this section that the regularity of the process $M$ is determined by the regularity of $X$ and the regularity of the function $F$ on the diagonal.  \\\\
In Section \ref{sec-prop-spaces} we study the properties of 2-microlocal spaces and the H\"{o}lder exponent with gauge functions. In Section \ref{pointChap} we prove Theorem \ref{Corollary- point1}. Section \ref{Aplications} is devoted to the derivation of a lower bound of the Hausdorff dimension of the sets $Q_{v,d}^M$ and then to the proof of Theorem \ref{point1}. In Section \ref{Aplications2} we derive the upper bound of the Hausdorff dimension of the sets $Q_{v,d}^M$ and prove Theorem \ref{Theorem-Spect}.
\section{Properties of 2-Microlocal Spaces and H\"{o}lder Exponent with Gauge Functions} \label{sec-prop-spaces}
In this section we study the properties of 2-microlocal spaces and H\"{o}lder exponents with gauge functions of smooth variation. The following theorem gives some basic properties of the 2-microlocal spaces with gauge function $F$, and their relations to the regularity spaces which were introduced in Section \ref{Microlocal}.
\begin{theorem}  \label{thm-2-micro}
Let $F\in SR_d^{2}(0+)$, $d\in(0,1)$ and $s\leq 0$. Then for every $t_0\in\re$ we have
\begin{itemize}
  \item [\bf{(a)}]If $\tilde s <s$, then $C_{F}^{ d,\tilde s}(t_0) \subset  C_{F}^{ d, s}(t_0)$,
  \item [\bf{(b)}]  $C_{F}^{ d, s}(t_0) \subset C^{d-\eps,s}(t_0), \ \forall   \eps\in (0,d)$,
  \item [\bf{(c)}] $C^{d+\eps,s}(t_0) \subset C_{F}^{ d, s}(t_0), \ \forall \eps\in (0,\infty)$,
  \item [\bf{(d)}] $ C_{F}^{ d, s}(t_0)\subset C^{d-s-\eps}(t_0) , \ \forall \eps\in (0,d)$.
\end{itemize}
\end{theorem}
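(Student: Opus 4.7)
The plan is to compare $F(u,v)$ with $|u-v|^{d}$ via a Potter-type bound and then deduce the four inclusions, modulo polynomial-degree adjustments. The preliminary tool, which I would establish first by integrating the smooth-variation relations a) and b) of Definition \ref{smt.var2.0} to obtain $F(t,t-h)=h^{d}L(t,h)$ with $L$ slowly varying in $h$, is the pair of bounds
\bd
c_\eta\,|u-v|^{d+\eta}\ \leq\ F(u,v)\ \leq\ C_\eta\,|u-v|^{d-\eta},\qquad 0<|u-v|<h_0,
\ed
valid for every $\eta>0$ and uniformly in $t_0$ on compact sets. This is the workhorse throughout.

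For (b), I would insert the upper Potter bound directly into the defining inequality of $C_F^{d,s}(t_0)$ with the same polynomial $P$; the only subtlety arises when $\lfloor d-\eta-s\rfloor=\lfloor d-s\rfloor-1$, in which case I would truncate $P$ by one degree and absorb the removed top-degree monomial into the error using $R^{k}\leq R^{d-\eta-s}$ with $R=|u-t_0|+|v-t_0|\leq 1$, an inequality that holds precisely in the parameter range where such truncation is needed. Part (c) is the mirror argument: write $|u-v|^{d+\eps}=|u-v|^{d+\eps/2}|u-v|^{\eps/2}\leq c_{\eps/2}^{-1}h_0^{\eps/2}F(u,v)$, truncate the ambient polynomial down to degree $\lfloor d-s\rfloor$, and absorb the extra monomials by the same argument used in (a) below. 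For (d) I would specialize the defining inequality to $v=t_0$ (permissible since $g$ is continuous by the definition of $C_F^{d,s}$), obtaining
\bd
|g(u)-P(u)-(g(t_0)-P(t_0))|\ \leq\ C\,F(u,t_0)|u-t_0|^{-s}\ \leq\ C'\,|u-t_0|^{d-s-\eta};
\ed
a final degree truncation of $P(u)+g(t_0)-P(t_0)$ down to degree $\lfloor d-s-\eta\rfloor$ then produces the pointwise H\"{o}lder condition of Definition \ref{pointwise} with exponent $d-s-\eta$.

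Part (a) is the most delicate step and the main obstacle I anticipate. Given $g\in C_F^{d,\tilde s}(t_0)$ with polynomial $P$ of degree $N:=\lfloor d-\tilde s\rfloor$, let $Q$ be its truncation to degree $N':=\lfloor d-s\rfloor\leq N$. In a ball of radius $h\leq 1$ the hypothesis already yields $|(g-P)(u)-(g-P)(v)|\leq CF(u,v)R^{-\tilde s}\leq CF(u,v)R^{-s}$, since $-\tilde s\geq -s\geq 0$ and $R\leq 2h\leq 2$. It remains to bound
\bd
|(P-Q)(u)-(P-Q)(v)|\ \leq\ \sum_{k=N'+1}^{N} k|a_k|\,|u-v|\,R^{k-1}
\ed
by a constant multiple of $F(u,v)R^{-s}$. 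Every $k$ in the sum satisfies $k+s>d$, so I would split on the sign of $k-1+s$: when $k-1+s\geq 0$, use $R^{k-1+s}\leq(2h)^{k-1+s}$ together with $|u-v|=|u-v|^{d+\eta}|u-v|^{1-d-\eta}\leq c_\eta^{-1}(2h)^{1-d-\eta}F(u,v)$ from the lower Potter bound, making the product $(2h)^{k+s-d-\eta}F(u,v)$, which is tame for small $h$ once $\eta$ is chosen below $(k+s-d)\wedge(1-d)$; when $k-1+s<0$, use $R^{k-1+s}\leq|u-v|^{k-1+s}$ to reduce to $|u-v|^{k+s}\leq C\,F(u,v)$, which follows from the lower Potter bound with $\eta=(k+s-d)/2$. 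Each case delivers the required estimate on a sufficiently small ball, and taking the minimum of the finitely many such radii completes the proof.
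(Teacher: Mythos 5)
Your proof is correct and follows essentially the same route as the paper: the paper's one-line argument rests on Lemma \ref{lemma-aux1}, which supplies exactly the pair of Potter-type bounds $c_\eta|u-v|^{d+\eta}\leq F(u,v)\leq C_\eta|u-v|^{d-\eta}$ that you take as your workhorse. The polynomial-truncation bookkeeping you carry out (correctly, via the split on the sign of $k-1+s$; the cruder bound you quote for part (b) should be replaced by that same argument) is simply left implicit in the paper.
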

Before we prove Theorem \ref{thm-2-micro}, we introduce the following auxiliary lemma.
\begin{lemma}\label{lemma-aux1}
Let $F(t,r)\in SR_d^{\lfloor l+d\rfloor+2}(0+)$. Then for every $\eps>0$ and $-\infty <a<b<\infty$ we have
\begin{itemize}
\item [{\bf (a)}]
\begin{displaymath}
\lim_{h\downarrow 0} \ \  \sup_{a< v<u <b,\ \ |u-v|\leq h}(u-v)^{k+j-d+\eps} F^{(k,j)}(u,v)=0, \ \forall j=0,1,2,\ k\in 0,...,\lfloor l+d\rfloor+2-j,
\end{displaymath}
\item [{\bf (b)}]\begin{displaymath}
\lim_{h\downarrow 0} \ \  \sup_{a< v<u <b,\ \ |u-v|\leq h}\frac{(u-v)^{d+\eps}}{F(u,v)} =0.
\end{displaymath}
\end{itemize}
\end{lemma}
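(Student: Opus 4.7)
\textbf{Proof plan for Lemma \ref{lemma-aux1}.} The guiding heuristic is that $F\in SR_d^{\lfloor l+d\rfloor+2}(0+)$ behaves near the diagonal like $F(u,v)\asymp(u-v)^d$, with every partial derivative in $s$ or $r$ costing one extra power of $(u-v)$. Both conclusions of the lemma then reduce to absorbing the small gain $(u-v)^\eps$. The cornerstone is a two-sided diagonal bound: for any compact $K\subset\mathds{R}$ and any $\eta>0$ there exist $h_0,c,C>0$ with
\bd
c(u-v)^{d+\eta}\leq F(u,v)\leq C(u-v)^{d-\eta},\qquad v\in K,\ 0<u-v<h_0.
\ed
This would be extracted from condition (a) of Definition \ref{smt.var2.0} by noting that $\partial_h\log F(t,t-h)=-F^{(0,1)}(t,t-h)/F(t,t-h)=d/h+\omega(t,h)/h$, with $\sup_{t\in K}|\omega(t,h)|\to 0$ as $h\downarrow 0$, and integrating in $h$ from a fixed $h_1$ down to $h$. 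Part (b) of the lemma is then immediate upon choosing $\eta<\eps$ in the lower bound.

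For part (a), I would proceed case-by-case on $(k,j)$. When $(k,j)$ is among the pairs directly handled by conditions (a)--(d), namely $(0,1)$, $(1,0)$, $(0,2)$, and $(i,1)$ for $1\leq i\leq \lfloor l+d\rfloor+1$, the corresponding clause gives $|F^{(k,j)}(u,v)|\leq C_{k,j}F(u,v)/(u-v)^{k+j}$ uniformly for $v\in K$ and $u-v$ small. Combining with the upper bound of the first step,
\bd
(u-v)^{k+j-d+\eps}|F^{(k,j)}(u,v)|\leq C(u-v)^{\eps-\eta}\longrightarrow 0,
\ed
once $\eta<\eps$ is fixed. The remaining derivatives, namely $F^{(k,0)}$ for $2\leq k\leq \lfloor l+d\rfloor+2$ and $F^{(k,2)}$ for $1\leq k\leq \lfloor l+d\rfloor$, are not directly addressed by the four conditions. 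I would recover them by one-variable integration against the already-controlled derivatives; for instance,
\bd
F^{(k,0)}(u,v)=F^{(k,0)}(u,u-h_1)+\int_{v}^{u-h_1}F^{(k,1)}(u,r)\,dr,\qquad v<u-h_1<u,
\ed
whose integrand has just been estimated by $|F^{(k,1)}(u,r)|\leq C(u-r)^{d-k-1-\eta}$ for $k\leq \lfloor l+d\rfloor+1$; a direct evaluation of the integral produces $|F^{(k,0)}(u,v)|=O((u-v)^{d-k-\eta})$, and multiplication by $(u-v)^{k-d+\eps}$ yields $O((u-v)^{\eps-\eta})\to 0$. A symmetric integration in $v$ against $F^{(k,1)}$ or $F^{(0,2)}$ handles the $F^{(k,2)}$ family.

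The principal technical obstacles are twofold: first, the very top-order case $(k,j)=(\lfloor l+d\rfloor+2,0)$ cannot be reached by integration from a strictly higher-order controlled derivative, and would instead require a local Taylor expansion of $u\mapsto F^{(\lfloor l+d\rfloor+1,0)}(u,v)$ at an interior point at distance of order $(u-v)$ from the diagonal, combined with the continuity of $F^{(\lfloor l+d\rfloor+2,0)}$ on $\tilde E$ to transfer the integrated information to a pointwise bound; second, one must maintain uniformity in $v\in K$ across these iterative integration arguments. The latter concern is addressed systematically by checking at each step that every constant introduced depends only on $K$, on $\eta$, and on the uniform rates of convergence appearing in conditions (a)--(d) of Definition \ref{smt.var2.0}.
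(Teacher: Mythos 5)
The paper offers no argument for this lemma beyond the remark that it ``follows directly from the properties of smoothly varying functions,'' so your proposal must stand on its own. Its core is sound for most cases: the two-sided Karamata-type bound $c(u-v)^{d+\eta}\leq F(u,v)\leq C(u-v)^{d-\eta}$, obtained by integrating condition (a) of Definition \ref{smt.var2.0} in $h$ and using positivity and continuity of $F$ on the compact set $\{(t,t-h_1):t\in K\}\subset\tilde E$, is correct and gives part (b) and the $(k,j)=(0,0)$ case at once; combining it with conditions (a)--(d) correctly disposes of the pairs $(0,1)$, $(1,0)$, $(0,2)$ and $(i,1)$ for $1\leq i\leq\lfloor l+d\rfloor+1$; and recovering $F^{(k,0)}$ for $2\leq k\leq\lfloor l+d\rfloor+1$ by integrating the controlled $F^{(k,1)}(u,\cdot)$ in the second variable from a fixed distance $h_1$ off the diagonal works (modulo a slip: in the near-diagonal regime $0<u-v<h_1$ the base point satisfies $u-h_1<v$, and the fundamental-theorem identity carries the opposite sign from what you wrote).

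There are, however, two genuine gaps. First, the claim that ``a symmetric integration in $v$ against $F^{(k,1)}$ or $F^{(0,2)}$ handles the $F^{(k,2)}$ family'' is not an argument: $F^{(k,2)}=\partial_r F^{(k,1)}=\partial_s^k F^{(0,2)}$ is a \emph{derivative} of the controlled quantities, not an antiderivative of any of them, and a uniformly small function can have an arbitrarily large derivative, so no bound on $F^{(k,1)}$ or $F^{(0,2)}$ alone can control $F^{(k,2)}$ for $k\geq 1$. Second, the top case $(k,j)=(\lfloor l+d\rfloor+2,0)$ would require integrating $F^{(\lfloor l+d\rfloor+2,1)}$, which Definition \ref{smt.var2.0} does not constrain, and the fallback you sketch --- continuity of $F^{(\lfloor l+d\rfloor+2,0)}$ on $\tilde E$ --- yields no quantitative rate as $(u,v)$ approaches the diagonal, which is precisely what the lemma asserts. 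As far as I can tell, Definition \ref{smt.var2.0} as written simply does not control these two families, so a complete proof needs either a strengthened hypothesis (the analogue of condition (c) for all mixed partials $F^{(i,j)}$ with $i+j\leq \lfloor l+d\rfloor+2$, which the paper's examples $(t-r)^{d}$ and $(t-r)^{d}|\log(t-r)|^{\eta}$ do satisfy) or a restriction of the statement to the cases actually invoked later in the paper, namely $j\in\{0,1\}$ with $k+j\leq\lfloor l+d\rfloor+1$ together with $(0,2)$.
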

The proof of Lemma \ref{lemma-aux1} follows directly from the properties of smoothly varying functions.
\paragraph{Proof of Theorem \ref{thm-2-micro}} (a) follows from Definition \ref{Def-F-micro-space}. (b) follows immediately from Definitions \ref{Def-micro-space}, \ref{Def-F-micro-space} and Lemma \ref{lemma-aux1}(a). From Definitions \ref{Def-micro-space}, \ref{Def-F-micro-space} and Lemma \ref{lemma-aux1}(b) we get (c). (d) follows from Definition \ref{pointwise}  and \ref{Def-micro-space} and Lemma \ref{lemma-aux1}(a).  \qed \medskip \\
The following proposition shows the connection between the H\"{o}lder exponent with gauge function $F$, the H\"{o}lder exponent defined in (\ref{Holderexponent}) and the 2-microlocal frontier.
\begin{proposition} \label{proposition-prop-exp}
Let $d\in(0,1)$, and $l\geq 0$.  For every function $g$ we have
\begin{itemize}
  \item [\bf{(a)}] $\{t:\bar h^d_g(t) =l\}\subset  \{t:h_g(t)\geq l+d\}$,
  \item [\bf{(b)}] $\dim\{t:\bar h^d_g(t) =l\}\leq \dim \{t:h_g(t)\geq l+d\}$,
  \item [\bf{(c)}] $\{t:\bar h^d_g(t) =l\} \subset \{t:\sigma_{g,t}(-l)=d\}$.
\end{itemize}
\end{proposition}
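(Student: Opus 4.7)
The three parts all hinge on Theorem \ref{thm-2-micro}, which interlinks the gauge 2-microlocal space $C_F^{d,s'}(t_0)$ with the ordinary H\"older space $C^{\alpha}(t_0)$ and with the standard 2-microlocal space $C^{\sigma,s'}(t_0)$. The starting point in every case is to unpack the identity $\bar h^d_g(t_0)=l$ from (\ref{Def-F-Holderexponent}), extracting a threshold $s^{\ast}$ for the $s'$-parameter: $g\in C_F^{d,s'}(t_0)$ holds for every $s'$ slightly above $s^{\ast}$ and fails for every $s'$ strictly below $s^{\ast}$.

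For part (a), pick any $s'>s^{\ast}$ with $g\in C_F^{d,s'}(t_0)$ and apply Theorem \ref{thm-2-micro}(d), which gives $g\in C^{d-s'-\epsilon}(t_0)$ for every $\epsilon\in(0,d)$. Letting $s'\downarrow s^{\ast}$ and $\epsilon\downarrow 0$ reads off the H\"older bound $h_g(t_0)\geq l+d$ from the definition of $\bar h^d_g$. Part (b) is then immediate, since (a) is a set inclusion and Hausdorff dimension is monotone under inclusion.

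For part (c), one must establish the two inequalities $\sigma_{g,t_0}(-l)\geq d$ and $\sigma_{g,t_0}(-l)\leq d$. For the lower bound, apply Theorem \ref{thm-2-micro}(b) to $g\in C_F^{d,s'}(t_0)$ (with $s'$ slightly above $s^{\ast}$) to obtain $g\in C^{d-\epsilon,s'}(t_0)$, and then invoke the basic 2-microlocal embedding $C^{\sigma,s'}(t_0)\subset C^{\sigma-h,s'-h}(t_0)$ for $h\geq 0$ (which is immediate from $|u-v|\leq |u-t_0|+|v-t_0|$) to slide the $s'$-coordinate down to $-l$ while keeping the $\sigma$-coordinate arbitrarily close to $d$. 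For the upper bound, argue by contradiction: if $g\in C^{d+\epsilon,-l}(t_0)$ for some $\epsilon>0$, then the same embedding gives $g\in C^{d+\epsilon-h,-l-h}(t_0)$ for small $h>0$; since $d+\epsilon-h>d$, Theorem \ref{thm-2-micro}(c) then places $g$ in $C_F^{d,-l-h}(t_0)$, which contradicts the fact that $s^{\ast}$ is the threshold of membership in the gauge 2-microlocal space.

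The main obstacle is the trading step in part (c): the embedding $C^{\sigma,s'}\subset C^{\sigma-h,s'-h}$ must be used in both directions to transfer information from an $s'$ close to $s^{\ast}$ to the exact coordinate $s'=-l$, and the limits in $s'$, $\epsilon$, and $h$ must be coordinated so that the two one-sided inequalities pinch $\sigma_{g,t_0}(-l)$ down to $d$ exactly. The remaining arguments are direct applications of Theorem \ref{thm-2-micro} together with elementary manipulations of infima and Hausdorff dimension.
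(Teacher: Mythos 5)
Your argument is correct and follows the paper's own route: part (a) via Theorem \ref{thm-2-micro}(d) together with (\ref{Holderexponent}) and (\ref{Def-F-Holderexponent}), part (b) by monotonicity of Hausdorff dimension under inclusion, and part (c) by pinching the frontier at $s'=-l$ between the embeddings of Theorem \ref{thm-2-micro}(b),(c) --- the paper omits this last argument, deferring to the proof of Theorem \ref{point1}, where your diagonal shift $C^{\sigma,s'}\subset C^{\sigma-h,s'-h}$ appears in the concrete form of inequality (\ref{n124}) and Lemma \ref{lemma-aux1}. The only caveat is notational and pre-exists in the paper: reading (\ref{Def-F-Holderexponent}) literally, $\bar h^d_g(t)=l$ would force the threshold $s^{\ast}=d-l$ rather than $-l$, so one must (as you implicitly do) adopt the normalization $s^{\ast}=-l$ consistent with (\ref{Sets-QM}) for the exponents $l+d$ in (a) and $-l$ in (c) to come out as stated.
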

\begin{proof}
From (\ref{Holderexponent}), (\ref{Def-F-Holderexponent}), and Theorem \ref{thm-2-micro}(d) we immediately get (a). (b) follows directly from (a). The proof of (c) follows the same lines as the proof of Theorem \ref{point1}, hence it is omitted.
\end{proof}
\\\\
Recall the definition of the local H\"{o}lder exponent (see for example in Section 2.2 of \cite{Kolwankar-Levy2002}). Let $\rho\in(0,1)$ and $A\subset \re$. We say that $g\in C_{\textrm{loc}}^\rho(A)$ if there exists a constant $C>0$ such that
\bn
\frac{|g(x)-g(y)|}{|x-y|^\rho}\leq C, \ \forall x,y\in A.
\en
Define
\bd
\alpha_{\textrm{loc}}(g,t_0,h)=\sup \{\rho:g\in C_{loc}^\rho(B(t_0,h))\}.
\ed
Let $g$ be a continuous function. The local H\"{o}lder exponent of $g$ at $t_0$ is defined as
\bn
\alpha_{\textrm{loc}}(g,t_0)=\lim_{h\dr 0} \alpha_{\textrm{loc}}(g,t_0,h).
\en
The following corollary provides us with a lower bound on the local H\"{o}lder exponent of Volterra processes. This corollary follows directly from Theorem \ref{thm1} and Lemma \ref{lemma-aux1}.
\begin{corollary} \label{corollarty-local}
Let $d\in(0,1)$ and let $\{M(s)\}_{s\geq 0}$ be a Volterra process
\bd
M(s)=\int_0^sF(s,r)dX(r) , \  s \geq 0,
\ed
where $\{X(r)\}_{r\geq 0}$ is a semimartingale and $F\in SR^{2}_d(0+)$. Then
\be \label{cor-ineq}
\alpha_{\textrm{loc}}(M,t) \geq d, \ \ \forall t\in [0,1], \ \  P-\rm{a.s.}
\ee
Let $S$ be the set of jump times of $X$ on $t\in[0,1]$. Then (\ref{cor-ineq}) holds with equality for every $t\in S$.

\end{corollary}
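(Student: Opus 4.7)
The plan is to prove the two claims separately, using Theorem \ref{thm1}(b) together with the smooth variation bound (Lemma \ref{lemma-aux1}(a) with $k=j=0$) for the lower bound, and then Theorem \ref{thm1}(a) together with Lemma \ref{lemma-aux1}(b) for the matching upper bound at jump times. Throughout, one works on the probability-one event on which $\sup_{s\in[0,1]}|\Delta_X(s)|<\infty$, which holds since $X$ is a c\`adl\`ag semimartingale on a compact interval.

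For the lower bound, fix $\varepsilon\in(0,d)$ and $t\in[0,1]$. By Theorem \ref{thm1}(b), $P$-a.s.\ there exists a random $h_1>0$ and a finite random constant $K$ (depending on $\sup_{s\in[0,1]}|\Delta_X(s)|$) such that
\[
|M(u)-M(v)| \le K\, F(u\vee v, u\wedge v), \qquad \forall\, u,v\in[0,1],\ |u-v|\le h_1.
\]
By Lemma \ref{lemma-aux1}(a) applied with $k=j=0$ on a compact interval containing $B(t,h_1)$, there is a further deterministic $h_2\le h_1$ such that $F(u\vee v,u\wedge v)\le |u-v|^{d-\varepsilon}$ for all $u,v\in B(t,h_2)$. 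Combining these two inequalities shows $M\in C^{d-\varepsilon}_{\mathrm{loc}}(B(t,h_2))$, hence $\alpha_{\mathrm{loc}}(M,t,h_2)\ge d-\varepsilon$. Letting $h\downarrow 0$ and then $\varepsilon\downarrow 0$ yields $\alpha_{\mathrm{loc}}(M,t)\ge d$ for every $t\in[0,1]$, simultaneously on the same almost-sure event.

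For the reverse inequality at jump times, fix $t\in S$, so that $\Delta_X(t)\neq 0$ a.s. on the event $\{t\in S\}$. Theorem \ref{thm1}(a) gives
\[
\lim_{h\downarrow 0}\frac{M(t+h)-M(t)}{F(t+h,t)}=\Delta_X(t),
\]
so for every sufficiently small $h>0$ one has $|M(t+h)-M(t)|\ge \tfrac{1}{2}|\Delta_X(t)|\,F(t+h,t)$. Suppose, for contradiction, that $\alpha_{\mathrm{loc}}(M,t)>d$; then there exist $\varepsilon>0$ and $h_0>0$ with $|M(t+h)-M(t)|\le C\,h^{d+\varepsilon}$ for $h\in(0,h_0]$. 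This forces
\[
\frac{h^{d+\varepsilon}}{F(t+h,t)} \ge \frac{|\Delta_X(t)|}{2C},
\]
which contradicts Lemma \ref{lemma-aux1}(b) (yielding $(u-v)^{d+\varepsilon}/F(u,v)\to 0$). Hence $\alpha_{\mathrm{loc}}(M,t)\le d$, and combined with the lower bound we obtain equality on $S$.

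Neither step poses a real obstacle; the only point requiring mild care is making the almost-sure bound uniform in $t\in[0,1]$ for the lower bound, which is already built into Theorem \ref{thm1}(b) (the supremum is taken over all $s<t$), and checking that the applications of Lemma \ref{lemma-aux1} are legitimate for the regularity index $SR^2_d(0+)$ required in the hypothesis.
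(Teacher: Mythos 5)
Your proof is correct and follows exactly the route the paper indicates (the paper only remarks that the corollary ``follows directly from Theorem \ref{thm1} and Lemma \ref{lemma-aux1}''): Theorem \ref{thm1}(b) plus Lemma \ref{lemma-aux1}(a) for the uniform lower bound, and Theorem \ref{thm1}(a) plus Lemma \ref{lemma-aux1}(b) for equality at jump times. Nothing further is needed.
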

While the lower bound on $\alpha_{\textrm{loc}}(M,\cdot)$ is immediate, the upper bound for $t\in [0,1]\cap S^c$ is a very interesting open problem.

\section{Proof of Theorem \ref{Corollary- point1}} \label{pointChap}
The goal of this section is to prove Theorem \ref{Corollary- point1}. First we recall some auxiliary results on the Volterra process
from \cite{MytnikNeuman}.  \\\\
We recall the integration by parts formula for Volterra processes (Lemma 2.1 in \cite{MytnikNeuman}). In the following lemma we refer to functions in $\mathds{C}^{(1)}(E)$, which is the space of functions from Definition  \ref{CE}, without the condition that $f>0$ on $\tilde{E}$. It is easy to show that functions of smooth variation satisfy the assumptions of this lemma.
\begin{lemma} \label{IntParts}
Let $X$ be a semimartingale such that $X(0)=0$ a.s. Let $F(t,r)$ be a function in $\mathds{C}^{(1)}(E)$ satisfying $F(t,t)=0$ for all $t\in \mathds{R}$. Denote $f(t,r)\equiv F^{(0,1)}(t,r)$.
Then,
\bd
\int_{0}^{t} F(t,r)dX(r)=-\int_{0}^{t} f(t,r) X(r) dr, \ \ P-\rm{ a.s.}
\ed
\end{lemma}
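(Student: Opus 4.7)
The strategy is to apply the classical semimartingale integration-by-parts formula to the pair of processes $r \mapsto F(t,r)$ and $r \mapsto X(r)$ for fixed $t$, and then exploit the boundary conditions $F(t,t)=0$ and $X(0)=0$. First I would fix $t > 0$ and observe that the map $r \mapsto F(t,r)$ is continuous on $[0,t]$, continuously differentiable on $(0,t)$ with derivative $f(t,r)$, and satisfies $F(t,t) = 0$. For the smoothly varying kernels to which the lemma is applied, property (a) of Definition \ref{smt.var2.0} yields $f(t,t-h) \sim -d F(t,t-h)/h$ as $h \dr 0$, so $|f(t,r)| \lesssim (t-r)^{d-1}$ near the diagonal, and consequently $f(t,\cdot)$ is Lebesgue-integrable on $[0,t]$. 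Thus $r \mapsto F(t,r)$ is a continuous process of finite variation on $[0,t]$, in particular a semimartingale with respect to the filtration of $X$.

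Since $F(t,\cdot)$ is continuous and of finite variation on $[0,t]$, its quadratic covariation with $X$ vanishes and the product rule reads
\bd
F(t,t)X(t) - F(t,0)X(0) = \int_0^t F(t,r) \, dX(r) + \int_0^t X(r-)\,d_r F(t,r).
\ed
The left-hand side is zero by the hypotheses $F(t,t)=0$ and $X(0)=0$. On the right-hand side I would use the absolute continuity $d_r F(t,r) = f(t,r)\,dr$ together with the fact that $X(r-) = X(r)$ Lebesgue-almost everywhere (since $X$ has at most countably many jumps), which rearranges the identity to the desired
\bd
\int_0^t F(t,r)\,dX(r) = -\int_0^t f(t,r)\,X(r)\,dr.
\ed

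The only delicate point is the assertion that $r\mapsto F(t,r)$ is of finite variation on $[0,t]$, which underlies both the well-posedness of the right-hand side and the applicability of the product rule; this is precisely where the smooth-variation hypothesis enters. If one wished to avoid invoking smooth variation inside the proof itself, a safer route is to apply the product rule on $[0,t-\eps]$, where $F(t,\cdot)$ is manifestly $C^1$ with bounded derivative, and then to pass $\eps \dr 0$. The boundary term $F(t,t-\eps)X(t-\eps)\to 0$ by continuity of $F$ at the diagonal combined with path-boundedness of $X$ on $[0,t]$, the stochastic integral on the left converges by the dominated convergence theorem for semimartingale integrals (using $\sup_{r\in[0,t]}|F(t,r)|<\infty$), and the Lebesgue integral converges by monotone or dominated convergence once the integrability of $f(t,\cdot)$ near the diagonal is in hand. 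In either presentation the proof reduces after these observations to an essentially one-line computation.
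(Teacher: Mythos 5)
Your argument is correct. The paper does not actually prove this lemma; it imports it as Lemma 2.1 of \cite{MytnikNeuman}, and your derivation is the standard one that result rests on: the semimartingale product rule applied to $r\mapsto F(t,r)$ and $X$, with the covariation term vanishing because $F(t,\cdot)$ is continuous and of finite variation, the boundary terms killed by $F(t,t)=0$ and $X(0)=0$, and $X(r-)=X(r)$ off a countable set. You are also right to single out the one genuine subtlety: under the bare hypothesis $F\in\mathds{C}^{(1)}(E)$ the derivative $f(t,\cdot)$ need not be integrable up to the diagonal, so finite variation (equivalently, integrability of $f(t,\cdot)$ near $r=t$) must either be added as a hypothesis or supplied externally --- which is exactly why the paper remarks that smoothly varying kernels ``satisfy the assumptions of this lemma,'' since for those $|f(t,r)|\lesssim (t-r)^{d-1}$; your $\eps$-truncation fallback is a clean way to close that gap.
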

\textbf{Convention and Notation}\\
In what follows we use the notation $F(t,r)$ for a smoothly varying function of index $(d,\lfloor d+l \rfloor +2)$ (that is, $F \in SR^{\lfloor d+l \rfloor +2}_d(0+)$), where $d$ is some number in $(0,1)$ and $l\geq 0$. We denote by $f(t,r) \equiv F^{(0,1)}(t,r)$, a smooth derivative of index $(d-1,\lfloor d+l \rfloor +2)$, that is, $f  \in SD^{\lfloor d+l \rfloor +2}_{d-1}(0+)$. \\\\
Let $f(t,r) \in SD^{\lfloor d+l \rfloor +2}_{d-1}(0+)$ where $d\in(0,1)$.
Define the following function,
\bq \label{g-func}
f_{\dl}(t,v)&=&\frac{f(t+\dl,t+\dl-\dl v)}{f(t+\dl,t)} , \ \ t \in[0,1], \ \ v \geq 0,  \ \ \dl>0.
\eq
Let us state a lemma which deals with the properties of function $f_{\dl}$.
\begin{lemma} \label{seq56tag2}
Let $f(t,r) \in SD^{\lfloor d+l \rfloor +2}_{d-1}(0+) $ where $d\in(0,1)$ and $l\geq 0$. Let $f_{\dl}(t,v)$ be defined as in  (\ref{g-func}). Then,
\begin{displaymath}
\lim_{\dl\downarrow 0}\sup_{0 \leq t \leq 1} \bigg| \int_{0}^{1}|f_{\dl}(t,v) |dv - \frac{1}{d} \bigg|=0.
\end{displaymath}
\end{lemma}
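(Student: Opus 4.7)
The plan is to reduce the integral to a standard Karamata-type estimate by a change of variable, and then to deduce the uniform-in-$t$ convergence from the uniformity of the smooth-variation hypothesis on $f$ over compact parameter sets. Concretely, substituting $h=\dl v$ and writing $s=t+\dl$ transforms the integral into
\[
\int_{0}^{1} |f_{\dl}(t,v)|\, dv \;=\; \frac{1}{\dl\,|f(s,s-\dl)|}\int_{0}^{\dl}|f(s,s-h)|\,dh,
\]
so it suffices to show that, uniformly for $s$ in a fixed compact subset of $\re$ (I will take $[0,2]$, which contains $t+\dl$ for $t\in[0,1]$ and $\dl$ small),
\[
\frac{1}{\dl\,|f(s,s-\dl)|}\int_{0}^{\dl}|f(s,s-h)|\,dh \;\longrightarrow\; \frac{1}{d}\qquad (\dl\dr 0).
\]

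The core input is that, by the smooth-variation hypothesis $f\in SD^{\lfloor d+l\rfloor+2}_{d-1}(0+)$ (the analogue of condition (a) of Definition \ref{smt.var2.0} for $f$ with $\rho=d-1$),
\[
\lim_{h\dr 0}\sup_{s\in[0,2]}\Bigl|\tfrac{h\,f^{(0,1)}(s,s-h)}{f(s,s-h)}-(1-d)\Bigr|=0.
\]
This is equivalent to saying that $h\mapsto|f(s,s-h)|$ is regularly varying at $0^{+}$ with index $d-1\in(-1,0)$, with the convergence uniform in $s\in[0,2]$. Integrating the resulting uniform asymptotic $-\partial_h\log|f(s,s-h)|=(1-d)/h+o(1/h)$ from $h=\dl v$ to $h=\dl$ yields
\[
\frac{|f(s,s-\dl v)|}{|f(s,s-\dl)|} \;=\; v^{d-1}\exp(\eta_{\dl}(s,v)),\qquad \eta_{\dl}(s,v)\rr 0
\]
as $\dl\dr 0$, uniformly in $s\in[0,2]$ and $v$ in any compact subset of $(0,1]$. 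In particular the integrand (after the change of variable above, which is $|f(s,s-\dl v)|/|f(s,s-\dl)|$) converges pointwise to $v^{d-1}$.

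To conclude, the integrable singularity at $v=0$ forces one extra estimate: a Potter-type upper bound. Fixing $\eps\in(0,d)$, the same uniform ODE inequality $1-d-\eps\leq -h\,\partial_h\log|f(s,s-h)|\leq 1-d+\eps$, valid for all $h\in(0,\dl_0]$ and $s\in[0,2]$, can be integrated from $\dl v$ up to $\dl$ to yield
\[
\frac{|f(s,s-\dl v)|}{|f(s,s-\dl)|}\leq v^{\,d-1-\eps},\qquad v\in(0,1],
\]
for all $\dl\in(0,\dl_0]$ and $s\in[0,2]$. Since $d-1-\eps>-1$, the right-hand side is integrable on $(0,1]$, and dominated convergence together with the pointwise limit $v^{d-1}$ gives
\[
\int_{0}^{1}\frac{|f(s,s-\dl v)|}{|f(s,s-\dl)|}\,dv \;\longrightarrow\; \int_{0}^{1}v^{d-1}\,dv \;=\; \frac{1}{d}
\]
as $\dl\dr 0$, uniformly in $s\in[0,2]$, which is the claim.

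The main obstacle is passing from the pointwise smooth-variation asymptotic for the logarithmic derivative to the uniform Potter-type majorant while retaining uniformity in $s$; once that integrable majorant is in place the rest is bookkeeping with dominated convergence. The substitution to $s=t+\dl$ is harmless because the smooth-variation hypothesis is stated uniformly on compact sets in $\re$, so moving $t\in[0,1]$ to $s\in[0,2]$ costs nothing.
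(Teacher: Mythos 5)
Your argument is correct, and it is essentially the standard route: the paper itself defers the proof to Section 2 of \cite{MytnikNeuman}, where the same uniform Karamata-type machinery (logarithmic-derivative characterization of smooth variation, a Potter-type majorant, and passage to the limit under the integral) is used. The substitution reducing $\int_0^1|f_\dl(t,v)|\,dv$ to $\int_0^1 |f(s,s-\dl v)|/|f(s,s-\dl)|\,dv$ with $s=t+\dl$ is right, the integrated bound $v^{d-1+\eps}\leq |f(s,s-\dl v)|/|f(s,s-\dl)|\leq v^{d-1-\eps}$ follows correctly from the uniform condition $|h\,\partial_h\log|f(s,s-h)|+(1-d)|\leq\eps$ on $(0,\dl_0]\times[0,2]$, and $d-1-\eps>-1$ makes the majorant integrable. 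The only presentational point worth tightening is the last step: dominated convergence by itself gives convergence for each fixed $s$, not uniformity in $s$; to conclude uniformly one should split $\int_0^1=\int_0^\eta+\int_\eta^1$, kill the first piece with the uniform majorant $\int_0^\eta v^{d-1-\eps}dv$, and use the uniform convergence of the integrand on $[\eta,1]$ for the second. Since you already have both the uniform majorant and the uniform convergence on compact subsets of $(0,1]$, this is bookkeeping rather than a gap.
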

For the proof of Lemma \ref{seq56tag2} we refer to Section 2 of \cite{MytnikNeuman}.
\paragraph{\textbf{Notation:}}
For $I\subset \re$, $D_\re(I)$ denotes the set of real valued c\`{a}dl\`{a}g functions on $I$.  Let $\Gamma\equiv \{\omega\in \Omega:X(\cdot,\omega)\in D_\re(\re_+)\}$. By the assumptions of Theorem \ref{Corollary- point1}, $P(\Gamma)=1$. \\\\
The proof of Theorem \ref{Corollary- point1} follows immediately from the following proposition and  (\ref{Def-F-Holderexponent}).
\begin{proposition} \label{propoint1}
Let $M(s)$ be as in Theorem \ref{point1}.
Then for any $\eps \in (0,l)$, there exists $C_{(\ref{propoint11})}=C_{(\ref{propoint11})}(\omega, t)$, $h_{\ref{propoint11}}=h_{\ref{propoint11}}(\omega, t)$ and a polynomial $P_t$ of degree at most $\lfloor l+d \rfloor$ such that
\be \label{propoint11}
|M(u)-P_t(u)-(M(v)-P_t(v))| \leq C_{\ref{propoint11}}F(u,v)(|u-t|^{l-\eps}+|v-t|^{l-\eps}), \ \  \forall u,v\in B(t,h_{\ref{propoint11}}), \ u>v>0 , \ \ t\in S_l^X, \ \ P-\rm{a.s.}
\ee
\end{proposition}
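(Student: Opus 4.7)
The plan is to apply the integration-by-parts identity (Lemma \ref{IntParts}) to rewrite $M(s) = -\int_0^s f(s, r) X(r)\,dr$ with $f = F^{(0,1)}$, and then exploit the pointwise H\"older regularity of $X$ at $t$. Since $t \in S_l^X$, for any $\epsilon' \in (0, l)$ chosen smaller than $\eps$, there exist a polynomial $P_t^X$ of degree at most $\lfloor l \rfloor$ and constants $C_X, h_0 > 0$ such that $|X(r) - P_t^X(r)| \leq C_X |r-t|^{l-\epsilon'}$ on $B(t, h_0)$, while the c\`{a}dl\`{a}g property of $X$ makes $|X - P_t^X|$ globally bounded on $[0,1]$. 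Writing $X = P_t^X + Y$ and using linearity decomposes $M = M_1 + M_2$, where $M_1(s) = -\int_0^s f(s, r) P_t^X(r)\,dr$ carries the polynomial-driven smooth part and $M_2(s) = -\int_0^s f(s, r) Y(r)\,dr$ is the remainder driven by the small error $Y$.

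For $M_1$, I would iterate integration by parts in $r$ to transfer $\lfloor l \rfloor$ derivatives from $P_t^X$ onto iterated $r$-antiderivatives of $F$, terminating once $(P_t^X)^{(\lfloor l \rfloor + 1)} \equiv 0$. This yields a finite-sum representation of $M_1(s)$ as $F(s, 0) P_t^X(0)$ plus terms of the form $G_k(s)(P_t^X)^{(k)}(s)$, where $G_k(s) = \int_0^s F(s, r)(s-r)^{k-1}/(k-1)!\,dr$. The change of variables $r = s - w$, combined with the cancellations built into the smooth variation properties (a)--(d) of Definition \ref{smt.var2.0} and the uniform bounds of Lemma \ref{lemma-aux1}, shows that each $G_k$ is $C^{\lfloor l + d \rfloor + 1}$ on a neighborhood of $t > 0$. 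Taking $P_t$ to be the Taylor polynomial of $M_1$ at $t$ of degree $\lfloor l + d \rfloor$, iterated mean-value estimates give
\[
|(M_1 - P_t)(u) - (M_1 - P_t)(v)| \leq C |u-v|\,\max(|u-t|, |v-t|)^{\lfloor l+d \rfloor},
\]
which Lemma \ref{lemma-aux1}(b) then converts into the required $F(u,v)(|u-t|^{l-\eps} + |v-t|^{l-\eps})$ form.

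For $M_2$, I would first absorb the far-from-$t$ portion $-\int_0^{t - h_0} f(s, r) Y(r)\,dr$ into $P_t$: on $B(t, h_0/2)$ the integrand is smooth in $s$ since $s - r \geq h_0/2$ there, so its Taylor polynomial of degree $\lfloor l+d \rfloor$ at $t$ is added to $P_t$ and its remainder is controlled as in the previous step. The remaining piece is $-\int_{t - h_0}^s f(s, r) Y(r)\,dr$; writing
\[
M_2(u) - M_2(v) = -\int_{t-h_0}^v [f(u, r) - f(v, r)] Y(r)\,dr - \int_v^u f(u, r) Y(r)\,dr,
\]
I would bound the second integral directly using $|f(u, r)| \sim (u-r)^{d-1}$ from the smooth variation together with $|Y(r)| \leq C_X |r - t|^{l - \epsilon'}$, invoking Lemma \ref{seq56tag2} to produce an $F(u,v)$-sized factor from the $dr$-integration. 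For the first integral I would write $f(u, r) - f(v, r) = \int_v^u f^{(1,0)}(\xi, r)\,d\xi$ and control $f^{(1,0)} = F^{(1,1)}$ via the smooth variation estimate $|F^{(1,1)}(\xi, r)| \lesssim (\xi - r)^{d-2}$ obtained from Definition \ref{smt.var2.0}(c) and Lemma \ref{lemma-aux1}(a).

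The principal obstacle lies in the last step: coordinating these two $M_2$ estimates so that their combined output is precisely $F(u,v) \cdot (|u - t|^{l-\eps} + |v-t|^{l-\eps})$ rather than a cruder product of $(u-v)^d$ with $\max(|u-t|, |v-t|)^l$. This is most delicate in the regime where $|u - v|$ is much smaller than $\max(|u-t|, |v-t|)$, and will require a case split on whether $t$ lies between $v$ and $u$, along with careful use of Lemmas \ref{seq56tag2}--\ref{lemma-aux1} to pass between $(u-v)^d$-type bounds and $F(u, v)$; once this bookkeeping is done, the pointwise bound on $\bar h^d_M(t)$ claimed in Theorem \ref{Corollary- point1} follows from the proposition together with definition (\ref{Def-F-Holderexponent}).
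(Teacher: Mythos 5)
Your overall architecture is the paper's: integration by parts (Lemma \ref{IntParts}), the split $X=P_t^X+Y$ with $|Y(r)|\le C|r-t|^{l-\eps}$ near $t$, the exact treatment of the polynomial-driven part, and the far/near splitting of the remainder all correspond to the paper's decomposition in Lemma \ref{pointdec} and to Lemmas \ref{BR12}, \ref{BR3} and \ref{BR28}. In particular your second integral $\int_v^u f(u,r)Y(r)\,dr$ is exactly the paper's $I_{1,1}$, and your treatment of it via Lemma \ref{seq56tag2} matches Lemma \ref{BR3}.

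The gap is in the term $\int_{t-h_0}^{v}[f(u,r)-f(v,r)]Y(r)\,dr$, and it is not merely the bookkeeping issue you flag at the end. First, the bound $|f(u,r)-f(v,r)|\le\int_v^u|f^{(1,0)}(\xi,r)|\,d\xi\lesssim (u-v)(v-r)^{d-2}$ is not integrable in $r$ up to $r=v$: since $d-2<-1$ and $|Y(r)|\approx |v-t|^{l-\eps}$ is merely bounded, not vanishing, as $r\to v$ when $v\neq t$, the resulting $dr$-integral diverges. The paper avoids this by peeling off the sliver $r\in[t-\dl,v]$ (with $\dl=u-v$) and integrating $f(u,\cdot)-f(v,\cdot)$ exactly there, which produces $F(u,v)-F(u,t-\dl)+F(v,t-\dl)$ and hence the factor $F(u,v)$ directly (Lemma \ref{BR26}). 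Second, even on the remaining range $[t-h,t-\dl]$ a first-order expansion in the $s$-variable cannot give the stated bound once $l+d\ge 1$: it yields at best $(u-v)\cdot O(1)$, and $(u-v)^{1-d}$ is not $\lesssim |u-t|^{l-\eps}+|v-t|^{l-\eps}$ when $u-v$ is comparable to $|u-t|$ and $l-\eps>1-d$. This is precisely why the statement allows a compensating polynomial of degree $\lfloor l+d\rfloor$ rather than $\lfloor l\rfloor$: the paper subtracts a Taylor polynomial $P^4_t$ of degree $\lfloor l+d\rfloor$ in $s$, built from the moments $\int f^{(k,0)}(t,r)Y(r)\,dr$, and estimates the order-$(\lfloor l+d\rfloor+1)$ remainder, with the integrability of $f^{(\lfloor l+d\rfloor+1,0)}$ controlled by a separate monotonicity argument (Lemmas \ref{lem-conv} and \ref{BR25}). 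Your plan would need both devices --- the exact integration on $[t-\dl,v]$ and the higher-order polynomial subtraction on $[t-h,t-\dl]$ --- to close.
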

The following lemma helps us to prove Proposition \ref{propoint1}. 
\begin{lemma} \label{bound}
Let $g$ be a a function in $D_R[0,\infty)$ and suppose $h_g(t_{0})=l$, for some $l>0$.
For every constant $R>0$ and $\eps \in (0,l)$ there exist $C_{t_0}(R,\varepsilon)$ and a polynomial $P_{t_0}$, of degree at most $\lfloor l \rfloor$, such that
\be \label{globlem}
|g(t_0+\dl)-P_{t_0}(t_0+\dl)|\leq C_{t_0}(R,\eps)|\dl|^{l -\varepsilon} , \    \  \forall \dl \in [-R,R].
\ee
\end{lemma}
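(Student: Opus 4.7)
The plan is to split $\delta\in[-R,R]$ into a small window $(-h_1,h_1)$ around zero, where the pointwise H\"{o}lder property of $g$ at $t_0$ supplies the desired bound, and its complement, where c\`{a}dl\`{a}g boundedness on compact sets allows the bound to be enforced just by enlarging the constant.

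\textbf{Local estimate.} Since $h_g(t_0)=l$ and the supremum in (\ref{Holderexponent}) is taken over all $l'$ for which $g\in C^{l'}(t_0)$, we have $g\in C^{l-\varepsilon/2}(t_0)$. Definition \ref{pointwise} then provides a polynomial $P_{t_0}$ of degree at most $\lfloor l-\varepsilon/2\rfloor\leq\lfloor l\rfloor$, a constant $\tilde C>0$ and a radius $h_1\in(0,1]$ with
\bd
|g(t_0+\delta)-P_{t_0}(t_0+\delta)|\leq \tilde C\,|\delta|^{l-\varepsilon/2},\qquad \delta\in(-h_1,h_1).
\ed
Since $|\delta|\leq 1$ and $l-\varepsilon/2>l-\varepsilon$, we obtain (\ref{globlem}) on $(-h_1,h_1)$ with constant $\tilde C$.

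\textbf{Global estimate.} A c\`{a}dl\`{a}g function is bounded on every compact subinterval of its domain, so $g$ is bounded on $[\max(0,t_0-R),t_0+R]$; the continuous polynomial $P_{t_0}$ is bounded there too. Hence there is $M=M(g,R,t_0,P_{t_0})<\infty$ with $|g(t_0+\delta)-P_{t_0}(t_0+\delta)|\leq M$ for all admissible $\delta\in[-R,R]$. For $h_1\leq|\delta|\leq R$, using $l-\varepsilon>0$ and $|\delta|^{l-\varepsilon}\geq h_1^{l-\varepsilon}$, this yields
\bd
|g(t_0+\delta)-P_{t_0}(t_0+\delta)|\leq M\leq M\,h_1^{-(l-\varepsilon)}\,|\delta|^{l-\varepsilon}.
\ed
Setting $C_{t_0}(R,\varepsilon):=\max(\tilde C,\,M h_1^{-(l-\varepsilon)})$ then combines the two regimes into (\ref{globlem}) on all of $[-R,R]$.

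The only delicate point is the introduction of an intermediate exponent $l-\varepsilon/2$ rather than working directly with $l-\varepsilon$: the supremum in (\ref{Holderexponent}) need not be attained at $l$, so $g\in C^l(t_0)$ is not automatic, whereas $g\in C^{l-\varepsilon/2}(t_0)$ is immediate from the sup definition, and the resulting slack $|\delta|^{\varepsilon/2}\leq 1$ on $(-h_1,h_1)\subset(-1,1)$ absorbs effortlessly into the constant. Beyond this, no substantive obstacle arises; the argument is essentially a partition of $[-R,R]$ combined with a monotonicity and a boundedness argument.
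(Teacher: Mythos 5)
Your proposal is correct and follows essentially the same route as the paper: use the pointwise H\"{o}lder bound from $h_g(t_0)=l$ on a small neighbourhood of $t_0$, then handle $|\dl|$ bounded away from $0$ by local boundedness of the c\`{a}dl\`{a}g function and enlargement of the constant. The only cosmetic differences are your intermediate exponent $l-\eps/2$ (the paper works directly with $l-\eps$, which is equally legitimate since $l-\eps<\sup$) and your slightly cleaner handling of the far regime by bounding $|g-P_{t_0}|$ directly on the compact set.
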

\begin{proof}
Since $h_g(t_{0})=l$, then for every $\eps \in (0,l)$ there exists
$\dl'>0$ such that
\be \label{ExpC}
|g(t_0+\dl)-P_{t_0}(t_0+\dl)|\leq C_{\ref{ExpC}}(t_0,\eps)|\dl|^{l-\varepsilon} ,  \    \  \forall \dl\in[-\dl',\dl'].
\ee
Now let $R>0$. If $0<R \leq \dl^{'}$ we are done. Suppose $R>\dl^{'}$. Then if we pick $\dl$ such that $|\dl|\leq \dl^{'}$, then the result follows from (\ref{ExpC}). Let $\dl^{'}<|\dl|<R$.
$g(x)$ is a locally bounded function on $[0,\infty)$; hence there exists a constant $K=K(R)>0$ such that
\begin{eqnarray*}
|g(t_0+\dl)-g(t_0+\dl^{'})| &\leq&\frac{K}{| \dl^{'} |^{l-\varepsilon}}|\dl^{'}|^{l-\varepsilon}.
\end{eqnarray*}
By defining the constant
\bd
C_{t_0,\eps}(R) \equiv \frac{K}{|\dl^{'}|^{l-\varepsilon}}+C_{\ref{ExpC}}(t_0,\eps),
\ed
we get (\ref{globlem}).
\end{proof}
\\\\
Assume for the rest of this section again that $\Gamma \subset \Omega$ is such that $P(\Gamma)=1$ and $X(\cdot, \omega)$ is c\`{a}dl\`{a}g for all $\omega \subset \Gamma$.
In what follows in this section, we will be working with $X(\cdot, \omega)$ for an arbitrary $\omega \subset \Gamma$. We omit $P$-a.s. notation as we will be working with a particular realization of $X$.
For the rest of the section, if it is not stated otherwise, we assume that $t$ is as in the statement of Theorem \ref{Corollary- point1}, that is,
\bd
t\in S^X_l.
\ed
All constants that appear in the rest of the section may depend on $t$ and $\omega$. \\\\
In the next subsection we will prove Proposition \ref{propoint1}.
\subsection{Proof of Proposition \ref{propoint1}}
The decomposition proved in the next lemma is crucial for the proof of Proposition \ref{propoint1}.
This decomposition is derived by a simple change in variables and hence its proof is omitted.
\begin{lemma} \label{pointdec}
Let
\bd
Y(t)=\int_{0}^{t}f(t,r)X(r)dr.
\ed
Then for every polynomial $P_t(\cdot)=P_t(\cdot,\omega)$ and any $u,v\in (0,1]$ with $u-v=\dl>0$, the following decomposition holds:
\bd
Y(u)-Y(v)= I_{1,1}(v,\dl)+I_{1,2}(v,\dl)-I_{2,1}(v,\dl)-I_{2,2}(v,\dl), \ \ \forall t\geq 0.
\ed
where
\begin{eqnarray}
I_{1,1}(v,\dl) &=& \dl\int_{0}^{1} f(v+\dl,v+\dl(1-z))[X(v+\dl(1-z))-P_t(v+\dl(1-z))]dz, \nonumber \\
I_{1,2}(v,\dl) &=& \dl\int_{0}^{1} f(v+\dl,v+\dl(1-z))P_t(v+\dl(1-z))dz, \nonumber \\
I_{2,1}(v,\dl) &=& \dl\int_{0}^{v/\dl}(f(v,v-\dl z)-f(v+\dl,v-\dl z))[X(v-\dl z)-P_t(v-\dl z)]dz, \nonumber \\
I_{2,2}(v,\dl)&=&\dl\int_{0}^{v/\dl}(f(v,v-\dl z)-f(v+\dl,v-\dl z))P_t(v-\dl z)dz. \nonumber
\end{eqnarray}
\paragraph{Notation.} In what follows we fix $t\in S_l^x$ and we restrict the polynomial $P_{t}$ from Lemma \ref{pointdec} to be of degree at most $\lfloor l \rfloor$.
\end{lemma}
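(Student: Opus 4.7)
The plan is to derive the decomposition by a direct calculation: split the integration interval at $v$, insert and subtract $P_t$ in each integrand, and then perform two linear substitutions to bring the four resulting pieces into the precise forms $I_{1,1}, I_{1,2}, I_{2,1}, I_{2,2}$.

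First, writing $u = v+\dl$ and splitting the integral defining $Y(u)$ at $v$, I would obtain
\[
Y(u)-Y(v) \;=\; \int_{v}^{v+\dl} f(v+\dl,r)\, X(r)\, dr \;+\; \int_{0}^{v} \bigl[f(v+\dl,r)-f(v,r)\bigr]\, X(r)\, dr.
\]
In the integrand of each of the two pieces I would insert $X(r) = (X(r)-P_t(r)) + P_t(r)$, which produces four integrals in all. The two terms coming from $\int_v^{v+\dl}$ are destined to become $I_{1,1}$ and $I_{1,2}$, while the two terms coming from $\int_0^v$ are destined to become $-I_{2,1}$ and $-I_{2,2}$; the minus signs appear because the difference $f(v+\dl,r)-f(v,r)$ is rewritten as $-(f(v,r)-f(v+\dl,r))$ to match the convention used in the definitions of the $I_{2,j}$.

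Next I would apply the two substitutions. On each integral over $[v,v+\dl]$ I would set $r = v+\dl(1-z)$, so that $dr=-\dl\,dz$ and the interval $r\in[v,v+\dl]$ maps to $z\in[0,1]$; reversing the limits absorbs the minus sign and produces the prefactor $\dl$, yielding exactly the formulas for $I_{1,1}$ and $I_{1,2}$. On each integral over $[0,v]$ I would set $r = v-\dl z$, so that $dr=-\dl\,dz$ and $r\in[0,v]$ maps to $z\in[0,v/\dl]$; the same flip of limits again produces a prefactor $\dl$, giving the stated expressions for $I_{2,1}$ and $I_{2,2}$.

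There is no real analytic obstacle, since this is a purely algebraic identity and the substitutions are affine. The only thing to watch carefully is the sign bookkeeping in the $I_{2,j}$ pieces, ensuring that the overall minus signs displayed in the statement of the lemma correctly reflect the reversal of the kernel difference. Integrability of every term is inherited from the standing assumptions, namely continuity of $f$ on $\tilde E$ and the càdlàg (hence locally bounded) property of $X$ on $\Gamma$.
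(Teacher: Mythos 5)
Your proposal is correct and is precisely the argument the paper has in mind: the paper itself states that the decomposition "is derived by a simple change in variables" and omits the details, and your splitting of the integral at $v$, insertion of $P_t$, and the affine substitutions $r=v+\dl(1-z)$ and $r=v-\dl z$ (with the sign bookkeeping for the $I_{2,j}$ terms) fill in exactly those details. Nothing further is needed.
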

In the next lemma we get a bound on the term $I_{1,2}(v,\dl)-I_{2,2}(v,\dl)$ in the decomposition of the increment $Y(u)-Y(v)$ from the previous lemma.
\begin{lemma} \label{BR12}
Let $I_{1,2}$ and $I_{2,2}$ be defined as in Lemma \ref{pointdec}. Then for every $\eps\in(0,l)$, there exist $C_{\ref{C1}}=C_{\ref{C1}}(\omega,t)>0$, $h_{\ref{C1}}=h_{\ref{C1}}(t)>0$ and a polynomial $P^1_t$ of degree at most $\lfloor l + d\rfloor$ such that,
\bq \label{C1}
|I_{1,2}(v,\dl)-P^1_t(v)-(I_{2,2}(v,\dl)-P^1_t(u))| &\leq & C_{\ref{C1}}F(u,v)(|v-t|^{l-\eps}+|u-t|^{l-\eps}), \nonumber \\
&& \ \forall u,v \in B(t,h_{\ref{C1}}), \  u-v=\dl>0, \ t > 0.
\eq
\end{lemma}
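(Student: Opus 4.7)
A direct change of variable ($r=v+\delta(1-z)$ in $I_{1,2}$ and $r=v-\delta z$ in $I_{2,2}$) rewrites the integrals as
\[ I_{1,2}(v,\delta) = \int_v^u f(u,r)P_t(r)\,dr, \qquad I_{2,2}(v,\delta) = \int_0^v [f(v,r)-f(u,r)]P_t(r)\,dr, \]
so that $I_{1,2}(v,\delta)-I_{2,2}(v,\delta)=H(u)-H(v)$, where $H(s):=\int_0^s f(s,r)P_t(r)\,dr$. Since $P_t$ depends only on $t$ and $\omega$, $H$ is a deterministic function of the single variable $s$, and the lemma reduces to exhibiting a polynomial $P^1_t$ of degree at most $\lfloor l+d\rfloor$ such that
\[ |H(u)-P^1_t(u)-(H(v)-P^1_t(v))| \leq C\, F(u,v)\bigl(|u-t|^{l-\eps}+|v-t|^{l-\eps}\bigr). \]

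The heart of the argument is to show that $H$ lies in $C^{\lfloor l+d\rfloor+1}$ on a neighborhood of $t$, with derivatives locally bounded (by constants that may depend on $t$, $\omega$, and the coefficients of $P_t$). Since $f(s,r)\sim(s-r)^{d-1}$ is singular on the diagonal $r=s$, direct differentiation of $H$ in $s$ fails beyond first order. Instead, applying Lemma \ref{IntParts} and iterating integration by parts in $r$ --- which terminates after $\lfloor l\rfloor+1$ steps since $P_t^{(\lfloor l\rfloor+1)}\equiv 0$ --- yields an explicit representation
\[ H(s)=-F(s,0)P_t(0)+\sum_{k=1}^{\lfloor l\rfloor}c_k\,\Phi_k(s)\,P_t^{(k)}(s),\qquad \Phi_k(s):=\frac{1}{(k-1)!}\int_0^s F(s,\eta)(s-\eta)^{k-1}d\eta, \]
for suitable signs $c_k\in\{\pm 1\}$. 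In the archetypal case $F(s,r)=(s-r)^d$ one computes $\Phi_k(s)\propto s^{d+k}$, a $C^\infty$ function on $s>0$; in the general case, the hypothesis $F\in SR^{\lfloor l+d\rfloor+2}_d(0+)$ combined with Lemma \ref{lemma-aux1} ensures, by further integration by parts in the second variable to transfer the singular factors $(s-\eta)^{k-1}$ onto higher mixed partials of $F$, that each $\Phi_k$ is at least $C^{\lfloor l+d\rfloor+1}$ in a neighborhood of $t>0$, so the same is true of $H$.

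With the smoothness of $H$ in hand, take $P^1_t$ to be the Taylor polynomial of $H$ at $t$ of degree $\lfloor l+d\rfloor$; then $R:=H-P^1_t$ satisfies $R^{(j)}(t)=0$ for $0\leq j\leq\lfloor l+d\rfloor$, whence $|R'(\xi)|\leq C|\xi-t|^{\lfloor l+d\rfloor}$ for $\xi$ near $t$ by Taylor's theorem. Writing $R(u)-R(v)=\int_v^u R'(\xi)\,d\xi$ and splitting into cases according to whether $u,v$ lie on the same side of $t$ (so $\delta=u-v\leq\max(|u-t|,|v-t|)$) or on opposite sides (so $\delta\geq\max(|u-t|,|v-t|)$), elementary estimates yield in each case a bound comparable with $\delta^d(|u-t|^{l-\eps}+|v-t|^{l-\eps})$, provided $\eps>0$ is chosen small and $h_{\ref{C1}}$ small enough. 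Lemma \ref{lemma-aux1} then lets one replace $\delta^d$ by $F(u,v)$ up to a slowly varying correction, giving (\ref{C1}). The main obstacle is precisely the smoothness of $H$: the polynomial structure of $P_t$ makes the integration by parts in $r$ terminate, while the smooth-variation hypothesis of order $\lfloor l+d\rfloor+2$ on $F$ is what is required to control all the mixed partial derivatives appearing after these manipulations.
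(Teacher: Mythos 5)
Your proof follows essentially the same route as the paper's: the same change of variables reduces $I_{1,2}-I_{2,2}$ to $H(u)-H(v)$ with $H(s)=\int_0^s f(s,r)P_t(r)\,dr$, the same iterated integration by parts (terminating because $\deg P_t\leq\lfloor l\rfloor$) expresses $H$ through $F(\cdot,0)$ and antiderivative-type one-variable functions that are $C^{\lfloor l+d\rfloor+1}$ near $t>0$, and the same Taylor-remainder estimate combined with Lemma \ref{lemma-aux1}(b) converts the factor $(u-v)$ into $F(u,v)$ at the cost of an $\eps$ in the exponent. The only real difference is one of emphasis: the paper simply asserts that the leading term $F(\cdot,0)$ determines the regularity, whereas you explicitly isolate the smoothness of the boundary terms $\Phi_k$ (whose integrands reach the diagonal) as the main obstacle and sketch how to handle it.
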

\begin{proof}
Recall that the function $F$ is the kernel function for the process $M$. Using the notation of Section \ref{smoothvar}, recall that  $F^{(0,k)}(t,r)=\frac{\partial^n}{\partial r^n} F(t,r)$ on the set $\tilde E$. We will also use the notation $\tilde F^{(k)}(t,r)$ for a function which satisfies
\bd
\frac{\partial^k}{\partial r^k} \tilde F^{(k)}(t,r)=F(t,r), \ \  \forall (t,r)\in  E.
\ed
From Definition \ref{smoothvar}, it follows that $F(v,v)=0$ for all $v\geq 0$. By integration we immediately get
\bq \label{new0}
I_{1,2}(v,\dl)-I_{2,2}(v,\dl)=\sum_{k=0}^{\lfloor l \rfloor}C_k\cdot t^{k}(\tilde{F}^{(k)}(v,0)-\tilde{F}^{(k)}(v+\dl,0)),
\eq
where $C_k=C_k(\omega)$ are some random constants independent of $t,v,\dl$.
Assume now that $t>0$ is fixed recall that $v>0$. Note that the first term in the summation in (\ref{new0}) determines the regularity of $I_{1,2}-I_{2,2}$. Since $F\in \mathbb{C}_{+}^{(\lfloor l+d \rfloor+2)}(E)$, we get by Taylor's Theorem that there exists a polynomial $P^1_{t}$ of order $ \lfloor l+d \rfloor$ and $h_{\ref{C1}}>0$ such that
\bq \label{neww2}
&&|F(u,0)-P^1_t(u)-F(v,0)+P^1_t(v)| \nonumber \\
&&\leq \frac{1}{\lfloor l \rfloor !}\bigg[\int_{t}^{u}F^{(0,\lfloor l+d \rfloor+1)}(z,0)(u-z)^{\lfloor l+d \rfloor}dz-\int_{t}^{v}F^{(0,\lfloor l+d\rfloor+1)}(z,0)(v-z)^{\lfloor l+d \rfloor}dz \bigg] \nonumber \\
&&\leq \frac{1}{\lfloor l \rfloor !}\sup_{z\in (t,1]}|F^{(0,\lfloor l+d \rfloor+1)}(z,0)|\bigg[\int_{t}^{v}[(u-z)^{\lfloor l+d \rfloor}-(v-z)^{\lfloor l+d \rfloor}]dz+\int_{v}^{u}(u-z)^{\lfloor l+d \rfloor}dz \bigg] \nonumber \\
&&\leq C(t)(|u-t|^{\lfloor l+d \rfloor+1}+|v-t|^{\lfloor l+d \rfloor+1}+|u-v|^{\lfloor l+d \rfloor+1}) \nonumber \\
&&\leq C(t)(u-v)(|u-t|^{\lfloor l+d \rfloor}+|v-t|^{\lfloor l+d \rfloor}) \nonumber \\
&&\leq C(t)F(u,v)(|u-t|^{l-2\eps}+|v-t|^{l-2\eps}),\ \forall v,u \in B(t,h_{\ref{C1}}), \ \eps\in (0,l),
\eq
where the last inequality follows from Lemma \ref{lemma-aux1}(b). From (\ref{new0}) and (\ref{neww2}), (\ref{C1}) follows.
\end{proof}
\\\\
In the next lemma we get the bound on the term of the $I_{1,1}(v,\dl)$ in the decomposition of the increment $Y(u)-Y(v)$.
\begin{lemma} \label{BR3}
Let $I_{1,1}$ be defined as in Lemma \ref{pointdec}. Then for any $\eps \in (0,l)$ there exists $C_{\ref{CR1}}=C_{\ref{CR1}}(\omega,t)>0$ and $h_{\ref{CR1}}=h_{\ref{CR1}}(t,\omega)>0$ such
that,
\be \label{CR1}
|I_{1,1}(v,\dl)| \leq C_{\ref{CR1}}F(u,v)(|u-t|^{l-\eps}+|u-t|^{l-\eps}), \  \forall u,v \in B(t,h_{\ref{CR1}}),  \ u-v=\dl>0, \ \ t\in S_l^X.
\ee
\end{lemma}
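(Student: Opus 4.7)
My plan is to exploit the fact that $t\in S_l^X$, which via Lemma \ref{bound} gives us a polynomial $P_t$ of degree at most $\lfloor l\rfloor$ and a constant $C$ such that
$|X(t+\sigma)-P_t(t+\sigma)|\le C|\sigma|^{l-\eps}$ on a fixed bounded interval. In the integrand of $I_{1,1}$ the argument of $X-P_t$ ranges over $r = v+\dl(1-z)\in[v,u]\subset B(t,h)$ as $z$ runs over $[0,1]$, so $|r-t|\le |u-t|+|v-t|$. Using the elementary inequality $(a+b)^{l-\eps}\le C(a^{l-\eps}+b^{l-\eps})$ (valid for both $l-\eps\le 1$ and $l-\eps>1$), pull the $X-P_t$ bound out of the integral to reduce \eqref{CR1} to controlling
\begin{displaymath}
\dl\int_0^1 |f(v+\dl,v+\dl(1-z))|\,dz.
\end{displaymath}

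Next, I would perform the substitution $w=1-z$ and recognize the resulting integrand as $|f(v+\dl,v+\dl w)|=|f_\dl(v,1-w)|\cdot |f(v+\dl,v)|$, where $f_\dl$ is the normalization from \eqref{g-func}. This factors the integral as
\begin{displaymath}
\dl\,|f(v+\dl,v)|\int_0^1 |f_\dl(v,z)|\,dz.
\end{displaymath}
By Lemma \ref{seq56tag2}, the remaining integral converges to $1/d$ uniformly in $v$ as $\dl\downarrow 0$, hence for $\dl$ smaller than some $h_{\ref{CR1}}(t,\omega)$ it is bounded by $2/d$. Finally, property (a) of Definition \ref{smt.var2.0} (with $\rho=d$) applied to the pair $(u,v)=(v+\dl,v)$ yields
\begin{displaymath}
\lim_{\dl\downarrow 0}\sup_{v\in K}\left|\frac{\dl\, f(v+\dl,v)}{F(v+\dl,v)}+d\right|=0,
\end{displaymath}
so $\dl|f(u,v)|\le C\, F(u,v)$ for $\dl$ small. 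Combining these three bounds (the H\"older-type bound on $X-P_t$, the uniform bound on $\int |f_\dl|$, and the smooth-variation substitution $\dl|f|\le CF$) gives \eqref{CR1} with a constant depending on $t$ and $\omega$.

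The only genuine obstacle is keeping the various uniformity statements compatible: the Lemma \ref{bound} estimate, the Lemma \ref{seq56tag2} limit, and the smooth-variation limit (a) must all hold on the same neighborhood $B(t,h_{\ref{CR1}})$ and for all $\dl$ smaller than a common threshold. This is handled by simply taking $h_{\ref{CR1}}$ to be the minimum of the three thresholds (which depend on $t,\omega$), and by noting that the polynomial $P_t$ is the same one produced by Lemma \ref{bound} and already fixed in Lemma \ref{pointdec}. No additional polynomial needs to be introduced here, in contrast to Lemma \ref{BR12}, because the full polynomial of degree $\lfloor l+d\rfloor$ in Proposition \ref{propoint1} will be assembled only after combining $I_{1,1}$ with the $P^1_t$ that comes out of Lemma \ref{BR12}.
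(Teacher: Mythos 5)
Your proposal is correct and follows essentially the same route as the paper's proof: bound $X-P_t$ via Lemma \ref{bound}, normalize the integrand by $\dl|f(v+\dl,v)|$ so that Lemma \ref{seq56tag2} controls $\int_0^1|f_\dl(v,z)|\,dz$, and convert $\dl|f(u,v)|$ into $CF(u,v)$ via Definition \ref{smt.var2.0}(a). Your closing remarks on taking the minimum of the thresholds and on not needing an extra polynomial match the paper's (implicit) handling of these points.
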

\begin{proof}
Let $f_\dl$ be defined as in (\ref{g-func}).
To bound $I_{1,1}$ we use the pointwise regularity
of $X$ at the points $t\in S_l^X$. By the definition of $S_l^X$, Lemma \ref{bound} and simple algebra we get that for every $\varepsilon \in (0,l)$ there exists $C_{\ref{CR1}}= C_{\ref{CR1}}(\omega, t)$ and $h_{\ref{CR1}}= h_{\ref{CR1}}(\omega, t)$ such that
\begin{eqnarray} \label{I1}
\frac{|I_{1,1}(v,\dl)|}{\dl |f(v+\dl,v)|}&\leq& \int_{0}^{1} |f_{\dl}(v,z)||X(v+\dl(1-z))-P_t(v+\dl(1-z))|dz  \nonumber \\
&\leq&C(t)\int_{0}^{1} |f_{\dl}(v,z)||v+\dl(1-z)-t|^{l-\eps}dz  \nonumber \\
&\leq&C(t)\bigg[|v-t|^{l-\eps}\int_{0}^{1} |f_{\dl}(v,z)|dz+ \dl^{l-\eps}\int_{0}^{1} |f_{\dl}(v,z)|dz \bigg] \nonumber \\
&\leq& C(\omega,t)(|u-t|^{l-\eps}+|v-t|^{l-\eps})\int_{0}^{1} |f_{\dl}(v,z)|dz \nonumber \\
&\leq & C_{\ref{CR1}}(|u-t|^{l-\eps}+|v-t|^{l-\eps}),\ \  \forall u,v\in B(t,h_{\ref{CR1}}), \ u>v \ , \ \ t\in S_l^X,
\end{eqnarray}
where the last inequality follows from Lemma \ref{seq56tag2}.
From (\ref{I1}) and  Definition \ref{smt.var2.0}(a), (\ref{CR1}) follows.
\end{proof} \\\\
In the next lemma we get the bound on the term $I_{2,1}(v,\dl)$ in the decomposition of the increment $Y(u)-Y(v)$.
\begin{lemma} \label{BR4}
Let $I_{2,1}$ be defined as in Lemma \ref{pointdec}.
Then for any $\eps \in (0,l)$ there exists a polynomial $P^2_t$ of degree at most $\lfloor l+d \rfloor$, and constants $C_{\ref{CR2}}=C_{\ref{CR2}}(\omega,t)>0$, $h_{\ref{CR2}}=h_{\ref{CR2}}(\omega,t)>0$ such that,
\bq \label{CR2}
|I_{2,1}(u,\dl)-P^2_t(u)-(I_{2,1}(v,\dl)-P^2_t(v))| &\leq& C_{\ref{CR2}}F(u,v)(|u-t|^{l-\eps}+|v-t|^{l-\eps}), \\ \nonumber
&&\   \forall u,v\in B(t,h_{\ref{CR2}}),\  u-v=\dl>0, \ t\in S_l^X.
\eq
\end{lemma}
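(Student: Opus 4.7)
The plan is to combine the strategies of Lemma \ref{BR12} (polynomial correction) and Lemma \ref{BR3} (use of pointwise H\"{o}lder regularity of $X$ at $t$), with $P^2_t$ playing the same role for $I_{2,1}$ that $P^1_t$ plays for $I_{1,2}-I_{2,2}$. The first move is to apply the change of variables $w = v - \dl z$ to rewrite
\[
I_{2,1}(v,\dl) = \int_{0}^{v} [f(v,w) - f(u,w)]\,[X(w) - P_t(w)]\, dw, \qquad u = v+\dl,
\]
which displays the dependence on the pair $(u,v)$ entirely through the kernel.

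Next, I split the integration domain into a near-diagonal piece $[v-\dl,v]$ and a far-from-diagonal piece $[0,v-\dl]$. On the near-diagonal piece, the integrable singularity $|f(v,w)| \leq C(v-w)^{d-1}$ (and the analogous bound for $f(u,w)$) combined with $\int_{v-\dl}^v (v-w)^{d-1}\, dw \sim \dl^d \sim F(u,v)$ via Lemma \ref{lemma-aux1}(b), together with the uniform bound $|X(w)-P_t(w)| \leq C(|u-t|^{l-\eps}+|v-t|^{l-\eps})$ supplied by Lemma \ref{bound}, yields a contribution already satisfying the required estimate with no polynomial correction.

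On the far-from-diagonal piece, I Taylor-expand $f(\cdot,w)$ in its first variable around $t$ to order $\lfloor l+d\rfloor$:
\[
f(v,w) - f(u,w) = \sum_{k=1}^{\lfloor l+d\rfloor} \frac{(v-t)^k-(u-t)^k}{k!}\, f^{(k,0)}(t,w) + [R(v,w)-R(u,w)].
\]
Integrating the polynomial sum against $X(w)-P_t(w)$ up to a fixed upper limit (independent of $(u,v)$, with the discrepancy from the true limit $v-\dl$ absorbed into the remainder) produces a polynomial $P^2_t$ of degree at most $\lfloor l+d\rfloor$ such that the original expression equals $P^2_t(u)-P^2_t(v)$ plus the Taylor remainder integral. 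The latter is controlled by combining the smooth-variation bound on $f^{(\lfloor l+d\rfloor+1,0)}$ from Lemma \ref{lemma-aux1}(a) with $|X(w)-P_t(w)| \leq C|w-t|^{l-\eps}$ (Lemma \ref{bound}), producing the desired bound $C F(u,v)(|u-t|^{l-\eps}+|v-t|^{l-\eps})$.

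The main obstacle is the construction of $P^2_t$. Two subtleties interact. First, the upper limit $v-\dl$ of the far-from-diagonal integral depends on the very variables $(u,v)$ being expanded, so the Taylor coefficients are not automatically polynomials in $u-t$ and $v-t$ alone; one must extend the integration to a fixed limit and show that the correction (over the strip $[v-\dl, v-\dl] \setminus \text{fixed}$) is dominated by $F(u,v)(|u-t|^{l-\eps}+|v-t|^{l-\eps})$. Second, the Taylor-coefficient integrands $f^{(k,0)}(t,w)$ themselves possess a non-integrable singularity $\sim (t-w)^{d-1-k}$ at $w=t$ for $k\geq 1$, so when $t \in [0, v-\dl]$ one needs an additional splitting around $w=t$ (in the spirit of the near-diagonal step) to regularise each coefficient, with careful bookkeeping so that the cumulative error still fits inside the final bound.
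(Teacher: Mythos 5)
Your overall strategy is the paper's: rewrite $I_{2,1}$ as $\int_0^v(f(u,r)-f(v,r))[X(r)-P_t(r)]dr$ plus a polynomial part, split the domain, Taylor-expand $f(\cdot,r)$ in its first variable around $t$ to order $\lfloor l+d\rfloor$ to manufacture $P^2_t$, and feed in $|X(r)-P_t(r)|\leq C|r-t|^{l-\eps}$ from Lemma \ref{bound}. The paper does exactly this via the three-way split $[0,t-h]$, $[t-h,t-\dl]$, $[t-\dl,v]$ (Lemmas \ref{BR28}, \ref{BR25}, \ref{BR26}), with $P^2_t=P^3_t+P^4_t$.

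However, the item you flag as the "main obstacle" --- the non-integrable singularity of the Taylor coefficients and remainder near $w=t$ --- is left unresolved, and it is not resolvable by "careful bookkeeping" with the pointwise bounds you invoke. Lemma \ref{lemma-aux1}(a) only gives $|f^{(k,0)}(z,r)|\lesssim (z-r)^{d-1-k-\eps}$, which for $k=\lfloor l+d\rfloor+1$ is far from integrable in $r$ near the diagonal, and an "additional splitting around $w=t$ in the spirit of the near-diagonal step" does not help because on the strip between $t$ and $v-\dl$ you cannot fall back on direct integration of the expanded kernel. The paper's resolution is a dedicated lemma (Lemma \ref{lem-conv}): it uses the sign and monotonicity properties (\ref{mono0})--(\ref{mono2}) of the successive derivatives $f^{(k,0)}$ near the diagonal --- consequences of smooth variation --- to integrate the inner $z$-integral by parts, replacing $|f^{(\lfloor l+d\rfloor+1,0)}|$ by $|f^{(\lfloor l+d\rfloor,0)}(t,\cdot)|$ and gaining exactly one order of integrability, after which the blow-up as $\dl\downarrow 0$ is only $\dl^{-2\eps}$ and is absorbed. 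A related, smaller issue infects your near-diagonal piece: bounding $|f(v,w)|\leq C(v-w)^{d-1-\eps}$ and integrating gives $\sim\dl^{d-\eps}$, which exceeds $F(u,v)\gtrsim\dl^{d+\eps}$ by $\dl^{-2\eps}$; the paper avoids this in Lemma \ref{BR26} by integrating $f$ exactly to recover increments of $F$ itself (again using monotonicity) rather than using power-law majorants. Supplying the analogue of Lemma \ref{lem-conv} is the substantive missing step.
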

The proof of Lemma \ref{BR4} is given in Section \ref{Sec-BR4}.

\section{Lower Bound for the Spectrum of Singularities and Proof of \\ Theorem \ref{point1}} \label{Aplications}
In this section we obtain the lower bound on the spectrum of singularities for the process $M$ defined in Theorem \ref{Theorem-Spect}.
Later in this section we prove Theorem \ref{point1}. \\ \\
The following lemma gives an upper bound to the H\"{o}lder exponent for a more general class of functions.
This lemma corresponds to Lemma 1 in \cite{jaffard2}.
\begin{lemma} \label{lemma-reg}
Let $t_0\in\re$, $d\in(0,1)$ and $F\in SR^2_d(0+)$. Let $\{r_n\}_{n=1,2,...}$ be a sequence of points converging to $t_0$ such that for each point $r_n$, there exists $h_{\ref{lemma-reg}}=h_{\ref{lemma-reg}}(r_n)$ and $s_n\in (0,1)$ such that
\bq \label{reg1}
|g(r_n+\dl)-g(r_n)|\geq s_nF(r_n+\dl,r_n), \ \forall \dl\in(0,h_{\ref{lemma-reg}}).
\eq
Assume that
\bq \label{reg11}
    l:=\liminf_{n\rr\infty} \frac{\mathrm{log}s_n}{\log|r_n-t_0|}<\infty.
\eq
Then
   \bq \label{ddd}
   \bar{h}^d_g(t_{0})\leq l+d.
    \eq
\end{lemma}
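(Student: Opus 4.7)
I argue by contradiction. Suppose $\bar h^d_g(t_0) > l + d$; by (\ref{Def-F-Holderexponent}) this means $g \in C_F^{d,s'}(t_0)$ for some $s' < -l$. Write $s' = -l - \eta$ with $\eta > 0$. Definition \ref{Def-F-micro-space} then supplies constants $C, h > 0$ and a polynomial $P$ of degree at most $\lfloor d + l + \eta \rfloor$ such that
\bd
|(g(u)-P(u))-(g(v)-P(v))| \leq C F(u,v)(|u-t_0|+|v-t_0|)^{l+\eta},\ \forall u,v \in B(t_0,h).
\ed
On the other hand, the liminf condition (\ref{reg11}) guarantees that, for each $\eps' \in (0, \eta)$, there are infinitely many $n$ with $s_n \geq R_n^{\, l+\eps'}$, where $R_n := |r_n - t_0| \downarrow 0$; I pass to this subsequence.

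The plan is to specialize the above $C_F^{d,s'}$ inequality at $u = r_n + \delta$, $v = r_n$ for a cleverly chosen $\delta = \delta(n) > 0$ (with $\delta < h_{\ref{lemma-reg}}$ so (\ref{reg1}) applies), and to combine it with (\ref{reg1}) via the triangle inequality, yielding
\bd
s_n F(r_n+\delta, r_n) \leq |P(r_n+\delta) - P(r_n)| + C F(r_n+\delta, r_n)(2R_n + \delta)^{l+\eta}.
\ed
Two auxiliary inputs drive the argument. First, since $P$ is a polynomial and $P'$ is bounded on $B(t_0,h)$, the Mean Value Theorem gives $|P(r_n+\delta) - P(r_n)| \leq C_P \delta$. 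Second, by the smooth variation of $F$ together with Lemma \ref{lemma-aux1}, for any auxiliary $\eps_1 > 0$ one has $c_1 \delta^{d+\eps_1} \leq F(r_n+\delta, r_n) \leq c_2 \delta^{d-\eps_1}$ for all sufficiently small $\delta$.

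The heart of the argument is the choice $\delta = R_n^K$: first pick $K$ large enough that $K(1-d) > l + \eps'$ (which exploits $d < 1$ and will eventually dominate the polynomial term), and then pick $\eps_1 \in (0, (\eta - \eps')/(2K))$ (which will dominate the second right-hand term). With these choices, the left side has order $R_n^{\, l+\eps'+K(d+\eps_1)}$, while the two right-hand terms are of orders $R_n^{\, K}$ and $R_n^{\, l+\eta+K(d-\eps_1)}$, respectively. By construction both right-hand exponents strictly exceed the left-hand one, so for large $n$ (small $R_n$) the combined inequality fails, producing the contradiction.

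The main obstacle is the presence of the polynomial correction $P$, which cannot be eliminated by taking higher-order finite differences of $g$, since the hypothesis (\ref{reg1}) only provides a two-point lower bound. The resolution exploits the asymmetry between the Lipschitz-type estimate $|P(r_n+\delta) - P(r_n)| \leq C_P \delta$ and the much larger behavior $F(r_n+\delta, r_n) \sim \delta^d$ with $d < 1$: taking $\delta$ to be a sufficiently high power of $R_n$ renders the polynomial contribution negligible compared to $s_n F$, while the factor $R_n^{\, l+\eta}$ in the $C_F^{d,s'}$ upper bound stays strictly smaller than the driving lower bound $s_n \geq R_n^{\, l+\eps'}$.
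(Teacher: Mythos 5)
Your overall strategy --- contradiction from membership in $C_F^{d,s'}(t_0)$ with $s'<-l$, a two-point increment at $u=r_n+\dl$, $v=r_n$, a Lipschitz bound on the polynomial, and the smooth-variation bounds on $F$ --- is the same mechanism the paper uses (the paper runs it directly rather than by contradiction: for an arbitrary polynomial of the admissible degree it exhibits pairs $r_n,r_n'$ that violate the $C_F^{d,s'}$ inequality for any $s'<-(l+\eps)$). However, your specific choice $\dl=R_n^K$ has a genuine gap: hypothesis (\ref{reg1}) is only available for $\dl\in(0,h_{\ref{lemma-reg}}(r_n))$, and $h_{\ref{lemma-reg}}(r_n)$ may depend on $r_n$ in a completely uncontrolled way --- it can be far smaller than $R_n^{K}$ for every fixed $K$. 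You flag the constraint $\dl<h_{\ref{lemma-reg}}$ in passing, but the choice $\dl=R_n^K$ does not enforce it, and you cannot simply shrink $\dl$ further because your exponent bookkeeping also needs $\dl$ bounded \emph{below} by a power of $R_n$: comparing $c_1R_n^{l+\eps'}\dl^{d+\eps_1}$ against $C'\dl^{d-\eps_1}R_n^{l+\eta}$ requires $\dl^{2\eps_1}\gtrsim R_n^{\eta-\eps'}$, which fails if $\dl$ is forced to be very small. As written, the argument therefore applies (\ref{reg1}) outside its range of validity.

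The lower-bound constraint on $\dl$ is an artifact of replacing the \emph{same} quantity $F(r_n+\dl,r_n)$ by $c_1\dl^{d+\eps_1}$ on the left and by $c_2\dl^{d-\eps_1}$ on the right. If you instead keep $F(r_n+\dl,r_n)$ intact on both sides, the second right-hand term is $CF(r_n+\dl,r_n)(2R_n+\dl)^{l+\eta}\leq CF(r_n+\dl,r_n)(3R_n)^{l+\eta}$, which for large $n$ is at most $\tfrac{1}{2}s_nF(r_n+\dl,r_n)$ because $s_n\geq R_n^{l+\eps'}$ and $\eta>\eps'$; no lower bound on $\dl$ is needed. You are then left with $\tfrac{1}{2}s_nF(r_n+\dl,r_n)\leq C_P\dl$, and since $F(r_n+\dl,r_n)\geq c_1\dl^{d+\eps_1}$ with $d+\eps_1<1$, letting $\dl\downarrow 0$ for a fixed large $n$ yields $0<\tfrac{c_1}{2}s_n\leq C_P\dl^{1-d-\eps_1}\rightarrow 0$, a contradiction. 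With this repair every requirement on $\dl$ is an upper bound, so $\dl$ may be taken smaller than $h_{\ref{lemma-reg}}(r_n)$, and your proof becomes essentially the contrapositive of the paper's argument (whose choice of $r_n'$ likewise only requires $r_n'-r_n$ to be sufficiently small, with the threshold depending on $s_n$ through the inequality $C|r_n-z|\leq\tfrac{s_n}{2}|r_n-z|^{d+\eps}$).
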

\begin{proof}
The proof of Lemma \ref{lemma-reg} follows the same lines as the proof of Lemma 1 in \cite{jaffard2}. \\\\
Let $P_{t_0}$ be an arbitrary polynomial of degree at most $\lfloor l+d\rfloor$ and let $\eps>0$ be arbitrary small.
Recall $d\in(0,1)$, then from Lemma \ref{lemma-aux1}(b), for every $n\in \mathds{N}$ there exists $h_n>0$ such that
\bq \label{epl1}
|P_{t_0}(r_n)-P_{t_0}(z)|&\leq& C(t_0)|r_n-z| \nonumber \\
&\leq&\frac{s_n}{2}|r_n-z|^{d+\eps} \nonumber \\
&\leq&\frac{s_n}{2}F(z,r_n)
, \ \forall z\in (r_n,r_n+h_n).
\eq
From (\ref{epl1}) and (\ref{reg1}) we get that there exists $N_1\in\mathds{N}$, such that for each $n>N_1$ we can choose $r'_n >r_n$ which satisfies
\bq \label{reg02}
|P_{t_0}(r_n)-P_{t_0}(r'_n)|
&\leq&\frac{s_n}{2}F(r'_n,r_n),
\eq
\bq \label{reg01}
|g(r_n)-g(r'_n)|\geq s_nF(r'_n,r_n),
\eq
and
\bq \label{reg00}
|r_n-r'_n|\leq \frac{1}{2}|r_n-t_0|.
\eq
From (\ref{reg02}) and (\ref{reg01}) we have
\bq \label{reg03}
|g(r_n)-P_{t_0}(r_n)-(g(r'_n)-P_{t_0}(r'_n))|&\geq& \frac{s_n}{2}F(r_n',r'_n).
\eq
Finally from (\ref{reg03}), (\ref{reg11}) and (\ref{reg00}), we get that there exists $N_2\geq N_1$, such that for every $n\geq N_2$
\bq \label{reg04}
|g(r_n)-P_{t_0}(r_n)-(g(r'_n)-P_{t_0}(r'_n))| &\geq& \frac{s_n}{2}F(r'_n,r_n) \nonumber \\
&\geq& \frac{1}{4}F(r'_n,r_n)|r_n-t_0|^{l+\eps} \nonumber \\
&\geq& C(t_0)F(r'_n,r_n)(|r_n-t_0|^{l+\eps}+|r'_n-t_0|^{l+\eps}).
\eq
Since $\eps$ and $h=|r_n-t_0|$ are arbitrarily small and $|r_n-r'_n|<h/2$, (\ref{ddd}) follows immediately from (\ref{reg04}) and (\ref{Def-F-Holderexponent}).
\end{proof}
\\\\
Before we start our proof of the lower bound on the Hausdorff dimension in Theorem \ref{Theorem-Spect}, we need to introduce some additional notations and make some assumptions, which are taken from \cite{jaffard}.
Let $\{X(t)\}_{t \geq 0 }$ be a L\'{e}vy process considered in Theorem \ref{Theorem-Spect},  without a Brownian component, with a L\'{e}vy measure $\pi(dx)$. We assume without a loss of generality that $\pi(\mathds{R}\setminus [-1/2,1/2])=0$.
We can do so since if $X$ has a finite number of jumps of absolute value greater than $1/2$, it has no effect on the spectrum of singularities. Up to a linear term $X(t)$ can be constructed as a superposition of independent compensated compound Poisson processes $X^j(t)$ with jump sizes
 \bd
 \Gamma_j=\{x:2^{-j-1}<|x|\leq 2^{-j}\}.
 \ed
Let $Y^j(t)$ be a compound Poisson with a Levy measure
 \bd
 \pi_j(dx)=1_{\Gamma_j}(x)\pi(dx).
 \ed
Suppose $Y^1(t),Y^2(t),\ldots$ are independent processes and let
\bd
X^j(t) = Y^j(t) - t \int_{\mathds{R}} x \pi_j(dx).
\ed
Then $X^j(t)$ are independent processes and we can define $X$ as
\bd
X(t) = \sum_{j=1}^{\infty} X^j(t) + at,
\ed
for some $a\in \mathds{R}$. We assume that $a=0$, since this again does not affect the spectrum of singularities.
The intensity of $X^j(t)$ equals 
\be \label{cj}
C_j=\int_{2^{-j-1}< |x|\leq 2^{-j}}\pi(dx).
\ee
Denote by $S$ the set of jump times of $X$ on $t\in[0,1]$.
Let $F_j$ be a set of times of the jumps of $X^j(t)$ on $t\in[0,1]$ and let $\dl>0$.
Denote by $A_\dl^j$ the union of closed intervals of length $2\cdot 2^{-\dl\cdot j}$ centered at the points of $F_j$. \\\\
Denote by $E_\dl$ the random set
\be \label{edl}
E_\dl=\lim_{j \rr \infty} \sup A_\dl^j.
\ee
Recall that $S^{X}_{1/\dl}$ is the set of points $t\in[0,1]$ such that the H\"{o}lder exponent $h_X(t)$ of $X_t$ equals to $1/\dl$.
\paragraph{Convention:} Throughout this Section the results are stated in $P$-a.s. sense but without explicit using symbols "$P$-a.s.". This is done in order to improve the readability of the section.  \\\\
Now we prove an essential lemma that is needed for the proof of the lower bound on the spectrum of singularities in Theorem \ref{Theorem-Spect}.
\begin{lemma} \label{lemma-dimE}
Let $t\in E_\dl$, then
\bq \label{dimE}
   \bar{h}^d_M(t)\leq l+d.
\eq
\end{lemma}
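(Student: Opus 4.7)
The strategy is to apply Lemma \ref{lemma-reg} to $g = M$ at the point $t_0 = t$. I need to produce a sequence $r_n \rr t$ and scalars $s_n\in (0,1)$ such that $|M(r_n+h) - M(r_n)| \geq s_n F(r_n+h, r_n)$ for all small $h>0$, and then verify $\liminf_{n\rr\infty} \frac{\log s_n}{\log|r_n-t|} \leq 1/\dl$. Setting $l := 1/\dl$, Lemma \ref{lemma-reg} will then yield $\bar h^d_M(t) \leq l + d$.

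The source of the sequence $\{r_n\}$ is the definition of $E_\dl$. Since $t \in E_\dl = \limsup_{j\rr\infty} A_\dl^j$, there is a strictly increasing sequence $j_n \rr \infty$ with $t \in A_\dl^{j_n}$ for every $n$. By construction of $A_\dl^{j_n}$, there is a jump time $t_{j_n} \in F_{j_n}$ of $X^{j_n}$ with $|t - t_{j_n}| \leq 2^{-\dl j_n}$. Because the independent Poisson processes $Y^j$ almost surely have mutually disjoint jump times, $\Delta_X(t_{j_n}) = \Delta_{X^{j_n}}(t_{j_n})$, so $|\Delta_X(t_{j_n})| > 2^{-j_n - 1}$. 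Now apply Theorem \ref{thm1}(a) at $s = t_{j_n}$:
\bd
\lim_{h\dr 0}\frac{M(t_{j_n}+h) - M(t_{j_n})}{F(t_{j_n}+h, t_{j_n})} = \Delta_X(t_{j_n}).
\ed
Hence there exists $h_n > 0$ such that
\bd
|M(t_{j_n}+h) - M(t_{j_n})| \geq 2^{-j_n - 2}\, F(t_{j_n}+h, t_{j_n}),\ \ \forall h\in (0,h_n).
\ed
This verifies the hypothesis (\ref{reg1}) of Lemma \ref{lemma-reg} with $r_n := t_{j_n}$ and $s_n := 2^{-j_n - 2}$, and clearly $r_n \rr t$.

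It remains to evaluate the $\liminf$ in (\ref{reg11}). Both $\log s_n$ and $\log|r_n-t|$ are negative and tend to $-\infty$. Since $\log s_n = -(j_n+2)\log 2$ and $\log|r_n-t| \leq -\dl j_n \log 2$, we obtain
\bd
\frac{\log s_n}{\log|r_n-t|} \leq \frac{(j_n+2)\log 2}{\dl j_n \log 2}\xrightarrow[n\rr\infty]{}\frac{1}{\dl},
\ed
so $\liminf_n \frac{\log s_n}{\log|r_n-t|} \leq 1/\dl$. Lemma \ref{lemma-reg} then gives $\bar h^d_M(t) \leq 1/\dl + d = l+d$, as desired.

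The hard part has already been done: Theorem \ref{thm1}(a) captures the crucial fact that, near a jump time of $X$, the increment of $M$ is controlled from below by $|\Delta_X| \cdot F$. Once this is available, the proof reduces to locating jumps of the correct size near $t$ (which is exactly the content of membership in $E_\dl$) and a routine logarithmic comparison. The only place where one must be careful is matching the jump size $\sim 2^{-j_n}$ with the spatial scale $\sim 2^{-\dl j_n}$ so that the $\liminf$ yields precisely $1/\dl$; this is built into the definition of $E_\dl$.
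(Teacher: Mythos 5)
Your proof is correct and follows essentially the same route as the paper: extract from the definition of $E_\dl$ a sequence of jump times $r_n\to t$ with jump sizes $s_n\asymp 2^{-j_n}$ and $|r_n-t|\leq 2^{-\dl j_n}$, invoke Theorem \ref{thm1}(a) to get the lower bound $|M(r_n+h)-M(r_n)|\geq \tfrac12|s_n|F(r_n+h,r_n)$, check that the $\liminf$ in (\ref{reg11}) is at most $1/\dl$, and conclude via Lemma \ref{lemma-reg}. If anything, you are more explicit than the paper about the $\limsup$ subsequence and the a.s. disjointness of the jump times of the $X^j$, which the paper leaves implicit.
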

\begin{proof}
Let $\eps>0$ be arbitrary small. By the construction of $E_\dl$ we can extract a sequence of jump points $\{r_n\}_{n\geq 0}$ of the process $X$, converging to $t$, such that jump size $s_n$ of the process $X$ at $r_n$ satisfies $s_n\in[ 2^{-n-1},2^{-n}]$, and $|r_n-t|\leq 2^{-\dl n}$. From Theorem \ref{thm1}(a) we get that for each $r_n$, there exists $h_n>0$, such that
\begin{displaymath}
 |M(r_n+u)-M(r_n)|\geq \frac{1}{2}|s_n|F(r_n+u,r_n) , \ \ \forall u\in (0,h_n), \ n \in \mathds{N}.
\end{displaymath}
Note that
\bq \label{t12}
\liminf_{n\rr\infty} \frac{\mathrm{log}s_n}{\log|r_n-t_0|}\leq 1/\dl.
\eq
From (\ref{t12}) and Lemma \ref{lemma-reg}, (\ref{dimE}) follows immediately.
\end{proof}
\\\\
In the following proposition we derive the lower bound of the Hausdorff dimension of the sets $Q_{1/\dl,d}^M$.
Recall that $S$ is the set of jump times of $X$.
\begin{proposition} \label{Prop-Q-dim}
For all $\dl> \beta $ we have
\begin{itemize}
\item[{\bf (a)}]
\bn
S^{X}_{1/\dl} \setminus S \subset Q_{1/\dl,d}^M,
\en
\item[{\bf (b)}]
\bq \label{Res-Lemma-E}
\frac{\beta}{\dl}\leq\dim(Q_{1/\dl,d}^M). \nonumber
\eq
\end{itemize}
\end{proposition}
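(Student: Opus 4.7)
The plan is to prove parts (a) and (b) in sequence, leveraging Theorem \ref{Corollary- point1}, Lemma \ref{lemma-dimE}, and the Jaffard spectrum result (Theorem \ref{Thm-Jaffard}).

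For part (a), fix $t \in S^X_{1/\delta}\setminus S$; I would establish $\bar{h}^d_M(t) = 1/\delta + d$ via two matching bounds. The lower bound $\bar{h}^d_M(t) \geq 1/\delta + d$ is immediate from Theorem \ref{Corollary- point1} with $l = 1/\delta$, since $\delta > \beta$ gives $\lfloor d+1/\delta\rfloor + 2 \leq \lfloor d + 1/\beta\rfloor + 2$, so the smoothness hypothesis on $F$ inherited from Theorem \ref{Theorem-Spect} suffices. For the matching upper bound, the natural tool is Lemma \ref{lemma-dimE}, which gives $\bar{h}^d_M(t) \leq 1/\delta + d$ as soon as $t \in E_\delta$ (the value $l = 1/\delta$ coming from the sequence of jumps extracted in its proof). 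Thus the task reduces to verifying the inclusion
\[
S^X_{1/\delta} \setminus S \;\subset\; E_\delta.
\]

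This inclusion is the contentful step. The strategy is the standard Jaffard-type argument (cf.\ Proposition 1 in \cite{jaffard}, combined with the Balan\c{c}a strengthening that removes condition (\ref{sumcond})): decompose $X = \sum_j X^j$ into independent compensated compound-Poisson pieces with jump sizes in $\Gamma_j$, and argue by contrapositive. If $t \notin E_\delta$, then for every $\delta' < \delta$ sufficiently close to $\delta$ the point $t$ avoids $A^j_{\delta'}$ for all but finitely many $j$; combining the standard bounds on each $X^j$ near $t$ (in terms of the number and location of nearby jumps plus compensators), one deduces that $X$ would be pointwise H\"older at $t$ with exponent strictly greater than $1/\delta$, forcing $t \notin S^X_{1/\delta}$. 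This contradiction yields the desired inclusion.

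For part (b), observe first that a jump time of $X$ has $h_X = 0$ (no polynomial approximation can absorb a genuine discontinuity), so $S \cap S^X_{1/\delta} = \emptyset$ when $1/\delta > 0$, which gives $S^X_{1/\delta}\setminus S = S^X_{1/\delta}$. Combining with part (a),
\[
Q^M_{1/\delta,d} \;\supseteq\; S^X_{1/\delta}\setminus S \;=\; S^X_{1/\delta}.
\]
Since $\delta > \beta$ places $1/\delta \in (0, 1/\beta)$, the Jaffard spectrum (Theorem \ref{Thm-Jaffard} with the remark after it) yields $\dim S^X_{1/\delta} = d_\beta(1/\delta) = \beta/\delta$, and monotonicity of Hausdorff dimension concludes $\beta/\delta \leq \dim Q^M_{1/\delta,d}$.

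The main obstacle is the inclusion $S^X_{1/\delta} \setminus S \subset E_\delta$: conceptually it is the ``easy half'' of the Jaffard characterisation of pointwise H\"older exponents of L\'evy sample paths, but one must adapt the compound-Poisson bookkeeping to the precise definitions of $A^j_\delta$ and $E_\delta$ used in this paper. Everything else amounts to assembling Theorem \ref{Corollary- point1}, Lemma \ref{lemma-dimE}, and Theorem \ref{Thm-Jaffard}.
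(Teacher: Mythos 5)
Your overall architecture matches the paper's: part (a) by combining the lower bound from Theorem \ref{Corollary- point1} with the upper bound coming from Lemma \ref{lemma-dimE}, and part (b) by pushing the known dimension of $S^X_{1/\dl}$ through the inclusion of part (a). The one genuine problem is the inclusion you isolate as ``the contentful step'', namely $S^X_{1/\dl}\setminus S\subset E_{\dl}$. This is stronger than what the Jaffard/Balan\c{c}a characterisation provides, and it is false in general: for a non-jump point $t$, Proposition 1 of \cite{jaffard} yields $h_X(t)=1/\sup\{\alpha: t\in E_\alpha\}$, and that supremum need not be attained. Concretely, if for every large $j$ the nearest jump of $X^j$ lies at distance of order $j\cdot 2^{-\dl j}$ from $t$, then $t\in E_{\dl-\eps}$ for every $\eps>0$ (hence $h_X(t)=1/\dl$) while $t\notin E_\dl$. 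Your contrapositive argument breaks at exactly this point: $t\notin E_\dl$ only forces $h_X(t)\ge 1/\dl$, not $h_X(t)>1/\dl$, so no contradiction with $t\in S^X_{1/\dl}$ arises.

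The paper avoids this by using only $S^X_{1/\dl}\setminus S\subset\bigcap_{0<\alpha<\dl}E_\alpha\subset E_{\dl-\eps}$ for every $\eps\in(0,\dl)$, applying Lemma \ref{lemma-dimE} at level $\dl-\eps$ to obtain $\bar h^d_M(t)\le \frac{1}{\dl-\eps}+d$, and then letting $\eps\downarrow 0$; combined with the lower bound from Theorem \ref{Corollary- point1} this still pins down $\bar h^d_M(t)=1/\dl+d$, so your proof is repaired by the same substitution. The remaining differences are cosmetic: for part (b) you invoke Jaffard's spectrum $\dim S^X_{1/\dl}=\beta/\dl$ after noting $S\cap S^X_{1/\dl}=\emptyset$, whereas the paper cites Balan\c{c}a's statement $\dim(S^X_{1/\dl}\setminus S)=\beta/\dl$ directly; both are legitimate, provided you use the version of the spectrum result that does not require the summability hypothesis (\ref{sumcond}), which is not assumed in Theorem \ref{Theorem-Spect}.
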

\begin{proof}
In Proposition 1 in \cite{jaffard} it was proved that
\begin{eqnarray}  \label{funix}
S^X_{1/\dl} &\subset& \bigcap_{0<\alpha < \dl } E_{\alpha}, \ \ \forall \dl \in (0,\infty),
\end{eqnarray}
\be \label{funix1}
S^X_{1/\dl} = \bigg(\bigcap_{\alpha>0 } E_{\alpha}\bigg)\cup S , \ \  \dl =\infty,
\ee
without the use of assumption (\ref{sumcond}).
From (\ref{funix}) and (\ref{funix1}) we get
\be \label{funi221}
S^X_{1/\dl} \setminus S  \subset \bigcap_{0<\alpha < \dl } E_{\alpha} ,  \ \ \forall \dl \in (0,\infty].
\ee
Notice that $E_{\dl}$ is decreasing in $\dl$. From (\ref{funi221}) we have
\be \label{n33331}
S^X_{1/\dl} \setminus S  \subset E_{\dl-\eps}, \ \ \forall \eps \in(0,\dl).
\ee
Let $t_0\in S^X_{1/\dl}\setminus S$. From (\ref{n33331}) and Lemma \ref{lemma-dimE} we have
\bq \label{n41}
   \bar{h}^d_M(t_0)\leq \frac{1}{\dl-\eps}+d, \ \  \forall \eps\in(0,\dl).
\eq
From Theorem \ref{Corollary- point1} we have
\bq \label{n42}
\bar{h}^d_M(t_0) \geq \frac{1}{\dl}+d.
\eq
Combine (\ref{n41}) and (\ref{n42}) to get that $t_0\in Q_{1/\dl,d}^M$ and therefore
\bq \label{n43}
S^{X}_{1/\dl} \setminus S \subset Q_{1/\dl,d}^M.
\eq
Balan\c{c}a in Section 2.1 of \cite{Balanca} proved that
\bq \label{funi222}
\dim (S^{X}_{1/\dl} \setminus S) =\beta/\dl.
\eq
From (\ref{n43}) and (\ref{funi222}), and basic properties of the Hausdorff dimension, (b) follows.
\end{proof}
\paragraph{Proof of Theorem \ref{point1}}
Let $l\in (0,\infty)$ and $t\in S_l^X$. Note that for every $\eps,r>0$ there exists $C(t)>0$ such that
\bq \label{n124}
|u-t|^{r+\eps}+|v-t|^{r+\eps}&\geq& \frac{1}{4}(|u-t|^{\eps}+|v-t|^{\eps})(|u-t|^{r}+|v-t|^{r}) \nonumber \\
&\geq& C(t)|u-v|^{\eps}(|u-t|^{r}+|v-t|^{r}),\ \forall u,v\in [0,1].
\eq
Since $l\in (0,\infty)$ we get from (\ref{funix1}) that $t\not\in S$. Fix an arbitrarily small $\eps>0$. From Proposition \ref{Prop-Q-dim}(a), we get that for every $h>0$ and a polynomial $P_t$ of degree at most $\lfloor l+d \rfloor$, there exist $u,v\in B(t,h)$ such that
\bq \label{n123}
|M(u)-P_t(u)-(M(v)-P_t(v))|\geq F(u,v)(|u-t|^{l+\eps}+|v-t|^{l+\eps}).
\eq
Apply Lemma \ref{lemma-aux1}(b) and then (\ref{n124}) to get from (\ref{n123})
\bq \label{n125}
|M(u)-P_t(u)-(M(v)-P_t(v))|&\geq& C(t)|u-v|^{d+\eps}(|u-t|^{l+\eps}+|v-t|^{l+\eps}) \nonumber \\
&\geq & C(t)|u-v|^{d+2\eps}(|u-t|^{l}+|v-t|^{l}).
\eq
Since $\eps$ and  $h$ are arbitrarily small, we get from (\ref{def-frontier}) and (\ref{n125}) that
\bq
\sigma_{M,t}(-l) \leq d.
\eq
From Proposition \ref{propoint1}, Lemma \ref{lemma-aux1}(a) and (\ref{n124}) we get that for any $\eps\in(0,l)$, there exist $C_{\ref{propl}}=C_{\ref{propl}}(\omega, t)$, $h_{\ref{propl}}=h_{\ref{propl}}(\omega, t)$ and a polynomial $\tilde P_t$ of degree at most $\lfloor l+d \rfloor$ such that
\bq \label{propl}
&&|M(u)-\tilde P_t(u)-(M(v)- \tilde  P_t(v))| \nonumber \\
&&\leq C_{\ref{propoint11}}(t)F(u,v)(|u-t|^{l-\eps}+|v-t|^{l-\eps}) \nonumber \\
&&\leq C_{\ref{propoint11}}(t)(u-v)^{d-\eps}(|u-t|^{l-\eps}+|v-t|^{l-\eps}) \nonumber \\
&&\leq C(t)(u-v)^{d-2\eps}(|u-t|^{l}+|v-t|^{l})
, \ \  \forall u,v\in B(t,h_{\ref{propoint11}}), \ u>v>0 , \ \ t\in S_l^X, \ \ P-\rm{a.s.}
\eq
From (\ref{propl}) and (\ref{def-frontier}) we have
\bq
\sigma_{M,t}(-l) \geq d,
\eq
and we are done. \qed
\section{Upper Bound for the Spectrum of Singularities and Proof of \\ Theorem \ref{Theorem-Spect}} \label{Aplications2}
In this section we obtain an upper bound for the spectrum of singularities of the process $M$ defined in Theorem \ref{Theorem-Spect}. The proof is based on the results of Theorem \ref{Corollary- point1} and of Theorem 1 in \cite{jaffard}. Later in this section we prove Theorem \ref{Theorem-Spect}. We use here the same notation as in Section \ref{Multi}.
We also set
\be \label{Rdldef}
\tilde{S}^X_{\dl^{'}}=
\left \{ \begin{array}{ll}
\bigcup_{\dl^{'} \leq \dl} S^{X}_{1/\dl}, \ \ \textrm{    if   } \dl^{'} \in (0,\infty), & \\ \\
S^{X}_{\infty}, \  \  \textrm{  if    } \dl^{'}=\infty.  \\
\end{array} \right.
\ee
The following lemmas are crucial for the derivation of the upper bound of the Hausdorff dimension of the sets $Q_{1/\dl,d}^M $ in (\ref{Sets-QM}).
Recall that $S$ is the set of jump times of $X$.
\begin{lemma} \label{rdltag}
Let $\dl^{'} \in (0,\infty]$.
Then
\bd
\tilde{S}^X_{\dl^{'}} \setminus S \subset \bigcap_{0<\alpha < \dl^{'}} E_{\alpha}.
\ed
\end{lemma}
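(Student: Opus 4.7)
The plan is to reduce the statement directly to the two inclusions (\ref{funix}) and (\ref{funix1}) taken from Proposition~1 of \cite{jaffard}, using monotonicity of the family $\{E_\alpha\}$ in $\alpha$. First I would observe that since $A_\alpha^j$ is a union of intervals of length $2\cdot 2^{-\alpha j}$ around the points of $F_j$, the set $A_\alpha^j$ is nonincreasing in $\alpha$, and hence so is $E_\alpha=\limsup_j A_\alpha^j$. Consequently, for any $0<\alpha_1\leq \alpha_2$ we have $\bigcap_{0<\alpha<\alpha_2}E_\alpha\subset\bigcap_{0<\alpha<\alpha_1}E_\alpha$, and similarly for any $\dl\geq\dl'$ we have $\bigcap_{0<\alpha<\dl}E_\alpha\subset\bigcap_{0<\alpha<\dl'}E_\alpha$.

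Next I would split the argument into two cases according to whether $\dl'\in(0,\infty)$ or $\dl'=\infty$. Fix $t\in \tilde S^X_{\dl'}\setminus S$. In the first case, by the definition (\ref{Rdldef}) there exists some $\dl\geq \dl'$ with $t\in S^X_{1/\dl}$. If $\dl<\infty$, then (\ref{funix}) gives $t\in\bigcap_{0<\alpha<\dl}E_\alpha$, and by the monotonicity observation above this is contained in $\bigcap_{0<\alpha<\dl'}E_\alpha$. If $\dl=\infty$, then $t\in S^X_\infty$, so by (\ref{funix1}) and the assumption $t\notin S$ we obtain $t\in\bigcap_{\alpha>0}E_\alpha\subset\bigcap_{0<\alpha<\dl'}E_\alpha$. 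In the second case, $\tilde S^X_{\dl'}=S^X_\infty$, and (\ref{funix1}) together with $t\notin S$ gives $t\in\bigcap_{\alpha>0}E_\alpha$, which is exactly $\bigcap_{0<\alpha<\dl'}E_\alpha$ when $\dl'=\infty$.

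Since in every case $t\in\bigcap_{0<\alpha<\dl'}E_\alpha$, the desired inclusion follows. I do not expect any serious obstacle here: the lemma is essentially a bookkeeping rewriting of Jaffard's inclusions in a form adapted to the nonstandard set $\tilde S^X_{\dl'}$, and the only nontrivial input (monotonicity of $E_\alpha$ in $\alpha$) is immediate from the construction of $A_\alpha^j$. The only point that deserves a brief verification in the write-up is the case $\dl=\infty$ inside the union defining $\tilde S^X_{\dl'}$, which must be handled separately through (\ref{funix1}) rather than (\ref{funix}) because (\ref{funix}) is stated only for finite $\dl$.
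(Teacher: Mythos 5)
Your proposal is correct and follows essentially the same route as the paper: the paper simply cites its already-merged inclusion (\ref{funi221}), namely $S^X_{1/\dl}\setminus S\subset\bigcap_{0<\alpha<\dl}E_\alpha$ for all $\dl\in(0,\infty]$, and then takes the union over $\dl\geq\dl'$ using the monotonicity of the intersections, which is exactly what your case analysis on (\ref{funix}) and (\ref{funix1}) reconstructs. No gaps.
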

\begin{proof}
Let $\dl'\in(0,\infty]$. Note that $E_{\dl}$ is decreasing in $\dl$, then by (\ref{funi221}) we get
\be \label{funi2}
\bigg(\bigcup_{\dl^{'} \leq \dl} S^{X}_{1/\dl}\bigg) \setminus S  \subset \bigcap_{0<\alpha < \dl^{'} } E_{\alpha} ,  \ \ \forall \dl^{'} \in (0,\infty].
\ee
By (\ref{Rdldef}) and (\ref{funi2}) the result follows.
\end{proof}
\begin{lemma} \label{n8}
Let $M$ be defined as in Theorem \ref{Theorem-Spect}. Define
\be \label{tqdlm}
\tilde{Q}_{1/\dl,d}^M = \{t \in [0,1]: \bar{h}^d_M(t) \leq 1/\dl+ d\}\setminus S.
\ee
 Then
\bd
\tilde{Q}_{1/\dl,d}^M \subset \bigcap_{0<\alpha < \dl} E_{\alpha}, \ \ \forall 1/\dl \in [0,1/\beta).
\ed
\end{lemma}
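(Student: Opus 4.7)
\textbf{Proof plan for Lemma \ref{n8}.} The strategy is to argue by contrapositive: fix any $t \in \tilde{Q}_{1/\delta,d}^M$ (so $t \notin S$ and $\bar{h}^d_M(t) \leq 1/\delta + d$) and any $\alpha_0 \in (0,\delta)$; I will show $t \in E_{\alpha_0}$. Suppose, toward a contradiction, that $t \notin E_{\alpha_0}$. The key ingredients are Lemma \ref{rdltag} (which provides information on $h_X(t)$ from the complement of the $E_\alpha$'s) and Theorem \ref{Corollary- point1} (which transfers the regularity of $X$ at $t$ into a lower bound on $\bar{h}^d_M(t)$).

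\medskip

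First, for any $\delta' > \alpha_0$, the hypothesis $t \notin E_{\alpha_0}$ immediately gives $t \notin \bigcap_{0<\alpha<\delta'} E_\alpha$, since $\alpha_0$ itself witnesses the failure. Combined with $t \notin S$, the contrapositive of Lemma \ref{rdltag} yields $t \notin \tilde{S}^X_{\delta'}$; unpacking the definition (\ref{Rdldef}), this reads $h_X(t) > 1/\delta'$. Letting $\delta'$ decrease to $\alpha_0$ produces $h_X(t) \geq 1/\alpha_0 > 1/\delta$. Set $l := h_X(t)$, so $t \in S^X_l$ with $l > 1/\delta$. For $t$ a non-jump point of a L\'evy process without Brownian component, Jaffard's spectrum forces $l \leq 1/\beta$, so $l$ is finite and the smoothness assumption $F \in SR^{\lfloor d+1/\beta\rfloor+2}_d(0+)$ of Theorem \ref{Theorem-Spect} is enough to invoke Theorem \ref{Corollary- point1} at this $l$. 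That theorem then gives $\bar{h}^d_M(t) \geq l + d > 1/\delta + d$, contradicting the defining property of $\tilde{Q}_{1/\delta,d}^M$.

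\medskip

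The main delicate point I expect is step where I pass from ``$h_X(t) > 1/\delta'$ for every $\delta' > \alpha_0$'' to ``$l := h_X(t)$ is a well-defined finite number strictly greater than $1/\delta$, belonging to the range in which Theorem \ref{Corollary- point1} applies.'' Since the constraint $1/\delta \in [0, 1/\beta)$ only guarantees $l > 1/\delta$, I need to also use (implicitly) the a.s. upper bound $l \leq 1/\beta$ that comes from the L\'evy-process structure to ensure $\lfloor d + l\rfloor + 2 \leq \lfloor d + 1/\beta \rfloor + 2$, so that the kernel regularity hypothesis in the statement of Theorem \ref{Theorem-Spect} suffices to apply Theorem \ref{Corollary- point1}. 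Modulo this routine bookkeeping, the argument is a short contrapositive chain. No new estimates on $M$ are needed: the real work was already done in Theorem \ref{Corollary- point1} and Lemma \ref{rdltag}.
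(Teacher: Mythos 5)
Your overall strategy is the same as the paper's: argue by contraposition, use Lemma \ref{rdltag} to convert non-membership in $\bigcap_{0<\alpha<\delta}E_\alpha$ into information about $h_X(t)$, and then apply Theorem \ref{Corollary- point1} to lift this into a lower bound on $\bar h^d_M(t)$ contradicting $t\in\tilde Q^M_{1/\delta,d}$. There is, however, a genuine gap at the step where you ``unpack'' $t\notin\tilde S^X_{\delta'}$. By (\ref{Rdldef}), $\tilde S^X_{\delta'}=\bigcup_{\delta''\geq\delta'}S^X_{1/\delta''}$ collects exactly the points with $h_X(t)\in(0,1/\delta']$; its complement therefore only yields $h_X(t)\notin(0,1/\delta']$, i.e.\ either $h_X(t)>1/\delta'$ or $h_X(t)=0$. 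The second alternative is not vacuous: for a L\'evy process with $\beta>0$ the set $S^X_0\setminus S$ is in general nonempty (by (\ref{funix1}) it equals $\bigl(\bigcap_{\alpha>0}E_\alpha\bigr)\setminus S$), and if $h_X(t)=0$ then your application of Theorem \ref{Corollary- point1} with $l=h_X(t)$ gives only $\bar h^d_M(t)\geq d$, which produces no contradiction, so the whole chain collapses in that case.

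The paper closes exactly this hole with an input you omit: it invokes Proposition 1 of \cite{Balanca} (equivalently (\ref{funix})--(\ref{funix1})) to deduce that $t\notin E_\delta$, $t\notin S$ and $\delta>\beta$ together force $t\in\bigcup_{\delta'\leq\delta}S^X_{1/\delta'}$, hence $h_X(t)\geq1/\delta>0$; combined with $t\notin\tilde S^X_{\delta}$ this gives $h_X(t)>1/\delta$. In your setup the quickest repair is to observe that $h_X(t)=0$ together with $t\notin S$ would place $t$ in $\bigcap_{\alpha>0}E_\alpha\subset E_{\alpha_0}$ by (\ref{funix1}), contradicting your standing assumption $t\notin E_{\alpha_0}$. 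With that one line added, the remainder of your argument --- including the bookkeeping that $l\leq1/\beta$ so that the hypothesis $F\in SR^{\lfloor d+1/\beta\rfloor+2}_d(0+)$ suffices to invoke Theorem \ref{Corollary- point1} --- goes through and coincides with the paper's proof.
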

\begin{proof}
Let $1/\dl\in(0,1 / \beta)$ and suppose that $t_0\in [0,1]$ and $t_0 \not \in  \bigcap_{0<\alpha < \dl} E_{\alpha}$.
Our goal is to show that
\begin{eqnarray*}
t_0 &\in&  \big(\tilde{Q}_{1/\dl,d}^M\big)^c  \\ \\
&=&  \{t \in [0,1]:\bar{h}^d_M(t) \leq 1/\dl+ d\}^c \cup S.
\end{eqnarray*}
This will prove the lemma.
\medskip
If $t_0\in S$ the proof is finished. Hence from now on we assume that $t_0\not \in S$ and our aim is to show that
$t_0 \not \in   \{t\in[0,1] :\bar{h}^d_M(t) \leq 1/\dl+ d\}$. \\\\
First we show that $t_0\in\bigcup_{\dl'\leq \dl}S^X_{1/\dl'}$.
Recall that $E_{\dl}$ is decreasing in $\dl$ and therefore if $t_0 \not \in  \bigcap_{0<\alpha < \dl} E_{\alpha}$ then necessarily $t_0 \not \in  E_{\dl}$. By Proposition 1 in \cite{Balanca}, since $t_0 \not \in  E_{\dl}$, $t_0$ is not a jump point, and $\dl>\beta$, then we have
\be \label{dlbig}
t_0\in\bigcup_{\dl'\leq \dl}S^X_{1/\dl'}.
\ee
Now we show that $t_0 \not \in \{t \in [0,1]:\bar{h}^d_M(t) \leq 1/\dl+ d\}$.
By Lemma \ref{rdltag}, we have
\be \label{n1}
\tilde{S}^X_{1/\dl} \setminus S \subset \bigcap_{0<\alpha < \dl} E_{\alpha}.
\ee
Therefore, for $t_0 \not \in  \bigcap_{0<\alpha < \dl} E_{\alpha}$, we have $t_0 \in  (\tilde{S}^X
_{1/\dl})^c \cup S$.
Since we consider now only $t_0\not \in S$ and we get from (\ref{dlbig}) that $t_0\in\bigcup_{\dl'\leq \dl}S^X_{1/\dl'}$,
this implies that
\be \label{n9}
h_X(t_0) > 1/\dl.
\ee
From Theorem \ref{Corollary- point1}, and (\ref{n9}) we get
\bd
\bar{h}^d_M(t) > 1/\dl+ d.
\ed
Therefore $t_0 \not \in \tilde{Q}_{1/\dl+d}^M$, and the result follows.
\end{proof}
\paragraph{Proof of Theorem \ref{Theorem-Spect}:}
In the end of Section 1 of \cite{jaffard} Jaffard showed, without using assumption (\ref{sumcond}), that for every $\dl>\beta$.
\be \label{n121}
\dim (E_{\dl}) \leq \beta/\dl.
\ee
Since $E_{\dl}$ is decreasing in $\dl$, we have
\be \label{n331}
\bigcap_{0<\alpha<\dl}E_{\alpha} \subset E_{\dl-\eps}, \ \ \forall \eps \in (0,\dl).
\ee
From (\ref{n121}) and (\ref{n331}) we get
\be \label{n332}
\dim \big(\bigcap_{0<\alpha<\dl}E_{\alpha}\big) \leq \frac{\beta}{\dl-\eps} , \ \ \forall \eps \in (0,\dl-\beta).
\ee
By (\ref{n332}) for every $\dl>\beta$ we have
\be \label{n333}
\dim \big(\bigcap_{0<\alpha<\dl}E_{\alpha}\big) \leq \frac{\beta}{\dl}.
\ee
By Lemma \ref{n8}
\be \label{n3}
\tilde{Q}_{1/\dl,d}^M \subset \bigcap_{0<\alpha < \dl} E_{\alpha}, \ \ \forall 1/\dl \in [0,1/\beta).
\ee
Since the set of jumps of the L\'{e}vy process $X$ is a countable set, it has no effect on the Hausdorff dimension.
Then by (\ref{n333}) and (\ref{n3}) we get
\begin{eqnarray}  \label{n5}
\dim \{t\in[0,1]:\bar h^d_M(t) \leq 1/\dl+d\}  &=&  \dim (\tilde{Q}_{1/\dl,d}^X)  \\  \nonumber
&\leq&  \beta/\dl ,\ \ \forall 1/\dl \in [0,1/\beta). \nonumber
\end{eqnarray}
From (\ref{n5}) and Proposition \ref{Prop-Q-dim}(b), Theorem~\ref{Theorem-Spect} follows.
\qed

\section{ Proof of Lemma \ref{BR4}}
This section is devoted to the proof of Lemma \ref{BR4}. The proof of Lemma \ref{BR4} follows immediately from the following sequence of lemmas.
Recall that our goal is to show that there exists a polynomial $P^2_t$ of degree of most $\lfloor l+d \rfloor$ and constants $C(\omega,t)>0$, $h(\omega,t)>0$ such that for every $\eps\in(0,l)$,
\bq \label{dff1}
\bigg|\int_{0}^{v}(f(u,r)-f(v,r))X(r)dr-P^2_t(u)+P^2_t(v)\bigg|  &\leq& C(t)F(u,v)(|u-t|^{l-\eps}+|v-t|^{l-\eps}), \nonumber \\
&&  \  \forall u,v\in B(t,h), u>v,  t\in S_l^X.
\eq
We prove (\ref{dff1}) for the case where $t<v$. The proof for the case where $t\geq v$ follows the same lines.
\begin{lemma} \label{BR28}
Under the same assumption as in Lemma \ref{BR4},
for any $\eps \in (0,l)$ there exists a polynomial $P^3_t$ of degree at most $\lfloor l+d \rfloor$, and constants $C_{\ref{CR28}}=C_{\ref{CR28}}(\omega,t)>0$ and $h_{\ref{CR28}}=h_{\ref{CR28}}(\omega,t)>0$ such that for every $h\in(0,t)$,
\bq \label{CR28}
&&\bigg|\int_{0}^{t-h}(f(u,r)-f(v,r))[X(r)-P_t(r)dr-P^3_t(u)+P^3_t(v)]\bigg| \nonumber \\ &&\leq C_{\ref{CR28}}F(u,v)(|u-t|^{l-\eps}+|v-t|^{l-\eps}), \ \  \forall u,v\in B(t,h_{\ref{CR28}}),\ u>v, \ \ t\in S_l^X.
\eq
\end{lemma}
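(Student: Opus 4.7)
The underlying idea is to extract the factor $u - v$ from the integrand by Taylor expanding $f(\cdot, r)$ in its first variable, exploiting the fact that for $h_{\ref{CR28}} < h/2$ the integration range $[0, t-h]$ is bounded away from $u, v \in B(t, h_{\ref{CR28}})$. Writing
\bd
f(u,r) - f(v,r) = \int_v^u f^{(1,0)}(\xi, r)\, d\xi,
\ed
and applying Fubini (justified by uniform boundedness of the integrand on $B(t, h/2) \times [0, t-h] \subset \tilde{E}$), one obtains
\bd
\int_0^{t-h}(f(u,r)-f(v,r))[X(r)-P_t(r)]\, dr \;=\; \int_v^u G(\xi)\, d\xi,
\ed
where $G(\xi) := \int_0^{t-h} f^{(1,0)}(\xi, r)[X(r)-P_t(r)]\, dr$. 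Since $f^{(k,0)} = F^{(k,1)}$ is continuous on $\tilde{E}$ for $k \leq \lfloor l+d\rfloor + 1$ (by $F \in SR_d^{\lfloor l+d\rfloor+2}(0+)$) and $X-P_t$ is locally bounded on $[0,t-h]$ (by the c\`{a}dl\`{a}g assumption on $X$), differentiation under the integral yields $G \in C^{\lfloor l+d \rfloor}(B(t, h/2))$ with uniformly bounded derivatives $G^{(k)}(\xi) = \int_0^{t-h} f^{(k+1,0)}(\xi,r)[X(r)-P_t(r)]\, dr$.

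Second, I Taylor expand $G$ around $\xi = t$ to order $\lfloor l+d \rfloor - 1$,
\bd
G(\xi) = \sum_{k=0}^{\lfloor l+d\rfloor - 1} \frac{G^{(k)}(t)}{k!}(\xi - t)^k + R(\xi), \qquad |R(\xi)| \leq C(\omega, t, h)\, |\xi - t|^{\lfloor l+d \rfloor}.
\ed
Integrating from $v$ to $u$ collapses the polynomial part into $P^3_t(u) - P^3_t(v)$ with
\bd
P^3_t(s) := \sum_{k=0}^{\lfloor l+d\rfloor - 1} \frac{G^{(k)}(t)}{(k+1)!}(s - t)^{k+1},
\ed
a polynomial in $s$ of degree at most $\lfloor l+d \rfloor$ whose coefficients depend on $t, h, \omega$ but not on $u, v$. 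The left hand side of \eqref{CR28} thus equals $\bigl|\int_v^u R(\xi)\, d\xi\bigr|$, which is bounded by $C|u-v|\bigl(|u-t|^{\lfloor l+d\rfloor} + |v-t|^{\lfloor l+d \rfloor}\bigr)$.

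It remains to convert this into the required form. By Lemma \ref{lemma-aux1}(b), for any $\eps' > 0$ and $|u-v|$ sufficiently small one has $|u-v|^{d+\eps'} \leq F(u,v)$. Splitting $|u-v| = |u-v|^{d+\eps'} \cdot |u-v|^{1-d-\eps'}$, using subadditivity $|u-v|^{1-d-\eps'} \leq |u-t|^{1-d-\eps'} + |v-t|^{1-d-\eps'}$ (valid since $1-d-\eps' \in (0,1)$), and the elementary bound $a^p b^q \leq a^{p+q} + b^{p+q}$ for $a, b \in [0,1]$ and $p, q \geq 0$, I obtain
\bd
|u-v|\bigl(|u-t|^{\lfloor l+d\rfloor} + |v-t|^{\lfloor l+d\rfloor}\bigr) \leq C\, F(u,v)\bigl(|u-t|^{1-d-\eps'+\lfloor l+d\rfloor} + |v-t|^{1-d-\eps'+\lfloor l+d\rfloor}\bigr).
\ed
Since $\lfloor l+d \rfloor \geq l+d-1$, the exponent satisfies $1-d-\eps'+\lfloor l+d\rfloor \geq l - \eps'$; choosing $\eps' \leq \eps$ and $h_{\ref{CR28}} < 1$ then delivers \eqref{CR28}. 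The main technical point is verifying that $G$ has enough classical derivatives, which is exactly why the regularity index $\lfloor l+d\rfloor + 2$ of $F$ in the hypothesis is essential; the remainder of the argument is routine bookkeeping built on Taylor's theorem and the asymptotic estimates of Lemma \ref{lemma-aux1}.
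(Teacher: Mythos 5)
Your argument is correct and follows essentially the same route as the paper: you take $P^3_t$ to be the Taylor polynomial at $t$ of $z\mapsto\int_0^{t-h}f(z,r)[X(r)-P_t(r)]\,dr$, bound the remainder via the $(\lfloor l+d\rfloor+1)$-st derivative of $f$ in its first variable (which is continuous and bounded on the compact region since $[0,t-h]$ is separated from $B(t,h_{\ref{CR28}})$), and then trade the factor $|u-v|$ for $F(u,v)$ via Lemma \ref{lemma-aux1}(b). Writing the increment as $\int_v^u G(\xi)\,d\xi$ and expanding $G$ to order $\lfloor l+d\rfloor-1$ is only a cosmetic repackaging of the paper's direct Taylor expansion to order $\lfloor l+d\rfloor$; the regularity budget used and the final exponent arithmetic are identical.
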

\begin{proof}
Assume that $t\in S_l^X$ is fixed.
By Definition \ref{smt.var2.0}, for every $h\in (0,t)$ we have
\bq \label{rf1}
\sup_{ 0\leq t\leq 1 }\sup_{ 0\leq r\leq t-h }|f^{(k,0)}(t,r)|<\infty, \ \forall k=0,1,..,\lfloor l+d \rfloor+1.
\eq
Since $[0,t-h]\subset [0,1]$ and $r\in[t-h,t]$, we can apply Lemma \ref{bound} and the definition of $S_l^X$ to see that for every $\varepsilon \in (0,l)$, there exists $C_{\ref{BoundX12}}(t,\omega)$ such that
\bq \label{BoundX12}
|X(r)-P_t(r)|&\leq& C_{\ref{BoundX12}}(t,\omega)|r-t|^{l-\varepsilon},
\ \forall r\in[0,t-h], \ \ t\in S_l^X.
\eq
Let
\bq \label{rf0}
P^3_t(z):=\sum_{k=0}^{\lfloor l +d \rfloor}\frac{(z-t)^k}{k!} \int_{0}^{t-h}f^{(k,0)}(t,r)[X(r)-P_t(r)]dr.
\eq
From (\ref{rf0}), the Taylor Theorem, (\ref{BoundX12}), (\ref{rf1}) and Lemma \ref{lemma-aux1}(b) we get
\bq \label{rf122}
&&\bigg|\int_{0}^{t-h}(f(u,r)-f(v,r))[X(r)-P_t(r)]dr-P^3_t(u)+P^3_t(v)\bigg|  \nonumber \\
&& =\frac{1}{(\lfloor l +d \rfloor+1)!}\bigg|\int_{0}^{t-h}\bigg[\int_{t}^{u}f^{(\lfloor l +d \rfloor+1,0)}(z,r)(u-z)^{\lfloor l +d \rfloor+1}dz \nonumber \\
&&-\int_{t}^{v}f^{(\lfloor l +d \rfloor+1,0)}(z,r)(v-z)^{\lfloor l +d \rfloor+1}dz\bigg][X(r)-P_t(r)]dr\bigg| \nonumber \\
&& =C\bigg|\int_{0}^{t-h}\bigg[\int_{t}^{v}f^{(\lfloor l +d \rfloor+1,0)}(z,r)[(u-z)^{\lfloor l +d \rfloor+1}-(v-z)^{\lfloor l +d \rfloor+1}]dz \nonumber \\
&&+\int_{v}^{u}f^{(\lfloor l +d \rfloor+1,0)}(z,r)(u-z)^{\lfloor l +d \rfloor+1}dz\bigg][X(r)-P_t(r)]dr\bigg| \nonumber \\
&& \leq C(t)\bigg|\int_{0}^{t-h}\bigg|\int_{t}^{v}f^{\lfloor l +d \rfloor+1}(z,r)[(u-z)^{\lfloor l +d \rfloor+1}-(v-z)^{\lfloor l +d \rfloor+1}]dz \nonumber \\
&&+\int_{v}^{u}f^{(\lfloor l +d \rfloor+1,0)}(z,r)(u-z)^{\lfloor l +d \rfloor+1}dz\bigg|(t-r)^l dr \nonumber \\
&& \leq C(t)\int_{0}^{t-h}\bigg[\int_{t}^{v}[(u-z)^{\lfloor l +d \rfloor+1}-(v-z)^{\lfloor l +d \rfloor+1}]dz
+\int_{v}^{u}(u-z)^{\lfloor l +d \rfloor+1}dz\bigg]dr \nonumber \\
&& \leq C(t)(u-v)(|u-t|^{\lfloor l +d \rfloor+1-\eps}+|v-t|^{\lfloor l +d \rfloor+1-\eps})  \nonumber \\
&& \leq C(t)F(u-v)(|u-t|^{l-2\eps}+|v-t|^{l-2\eps}),  \nonumber
\eq
where $C(t)=C(t,\omega)$.
\end{proof}
\begin{lemma} \label{lem-conv}
Let $l>0$, $d\in(0,1)$, $F\in SR^{\lfloor d+l \rfloor +2}_d(0+)$ and fix $t\in (0,1]$. There exists $h\in(0,t)$ such that for every $\eps>0$,
\bq \label{conv22}
\lim_{\dl\dr 0}\dl^{2\eps}\int_{t-h}^{t-\dl} (t-r)^{l-\eps}\int_{t}^{t+h}|f^{(\lfloor l +d \rfloor+1,0)}(z,r)|dzdr <\infty, \ \forall t\in[0,1].
\eq
\end{lemma}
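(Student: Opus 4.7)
The plan is to bound the mixed partial $f^{(\lfloor l+d\rfloor+1,0)}(z,r) = F^{(\lfloor l+d\rfloor+1,1)}(z,r)$ using the smooth variation of $F$ near the diagonal, compute the inner $z$-integral explicitly to obtain a power of $t-r$, then evaluate the remaining $r$-integral and verify that the singularity as $\dl\dr 0$ is killed by the prefactor $\dl^{2\eps}$.

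First I would fix $h\in(0,t)$ small (e.g.\ $h=t/2$) and observe that by Lemma \ref{lemma-aux1}(a) applied with $k=\lfloor l+d\rfloor+1$ and $j=1$, the quantity $(z-r)^{\lfloor l+d\rfloor+2-d+\eps'}|F^{(\lfloor l+d\rfloor+1,1)}(z,r)|$ tends to $0$ as $z-r\dr 0$; combined with continuity of $F^{(\lfloor l+d\rfloor+1,1)}$ on the compact subset of $\tilde E$ cut out by $\{(z,r):r\in[t-h,t],\ z\in[t,t+h],\ z-r\geq\dl_0\}$ for any $\dl_0>0$, this yields a constant $K(\eps')>0$ (possibly depending on $h$ and $\eps'$, but not on $\dl$) with
\begin{equation*}
|f^{(\lfloor l+d\rfloor+1,0)}(z,r)| \leq K(\eps')\,(z-r)^{d-\lfloor l+d\rfloor-2-\eps'}, \qquad 0<z-r\leq 2h.
\end{equation*}

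Next I would compute the inner integral. For $r\in[t-h,t-\dl]$, substituting $s=z-r$ and using that the exponent $d-\lfloor l+d\rfloor-2-\eps'<-1$, the dominant contribution comes from the lower endpoint:
\begin{equation*}
\int_t^{t+h} (z-r)^{d-\lfloor l+d\rfloor-2-\eps'}\,dz \leq C(\eps')\,(t-r)^{d-\lfloor l+d\rfloor-1-\eps'}.
\end{equation*}
Substituting this and changing variables $u=t-r$ in the outer integral reduces the double integral to
\begin{equation*}
C(\eps')\int_\dl^h u^{\theta-1-\eps-\eps'}\,du, \qquad \theta:=l+d-\lfloor l+d\rfloor\in[0,1).
\end{equation*}

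The final step is a case analysis, choosing $\eps'=\eps/2$. The one-variable integral is $O(1)$ when $\theta>3\eps/2$, of order $\log(1/\dl)$ when $\theta=3\eps/2$, and of order $\dl^{\theta-3\eps/2}$ when $\theta<3\eps/2$. Multiplying by $\dl^{2\eps}$ produces $O(\dl^{2\eps})$, $O(\dl^{2\eps}\log(1/\dl))$, or $O(\dl^{\theta+\eps/2})$ respectively, each of which tends to $0$ (hence in particular stays finite) as $\dl\dr 0$. The main subtlety is the third case — where the loss in the derivative bound would otherwise produce a genuine singularity — and the key observation is that the free parameter $\eps'$ can be chosen strictly smaller than $\eps$, so the residual exponent $\theta+\eps-\eps'$ produced after multiplication by $\dl^{2\eps}$ is strictly positive; the prefactor $\dl^{2\eps}$ is precisely what makes the whole argument go through regardless of whether $l+d$ is an integer.
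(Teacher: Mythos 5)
Your argument is correct, and it reaches the same one\-/dimensional endgame as the paper --- reducing the double integral to $\int_\dl^h u^{\theta-1-O(\eps)}\,du$ with $\theta=l+d-\lfloor l+d\rfloor$ and letting the prefactor $\dl^{2\eps}$ absorb the possible singularity --- but it handles the inner $z$-integral by a genuinely different mechanism. The paper first establishes sign and monotonicity properties of the successive $z$-derivatives of $f$ near the diagonal (the alternating signs in (\ref{mono0})--(\ref{mono2}), which follow from Definition \ref{smt.var2.0} because $f\in SD^{\cdot}_{d-1}(0+)$ with $d-1\in(-1,0)$ forces $(d-1)(d-2)\cdots(d-j)$ to alternate in sign), so that $\int_t^{t+h}|f^{(\lfloor l+d\rfloor+1,0)}(z,r)|\,dz$ equals the absolute value of the integral and telescopes to $2|f^{(\lfloor l+d\rfloor,0)}(t,r)|$ as in (\ref{dd1}); only then is Lemma \ref{lemma-aux1}(a) invoked, at order $\lfloor l+d\rfloor$, losing a single $\eps$. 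You instead apply Lemma \ref{lemma-aux1}(a) one order higher, directly to $F^{(\lfloor l+d\rfloor+1,1)}$, and integrate the resulting power $(z-r)^{d-\lfloor l+d\rfloor-2-\eps'}$ explicitly in $z$; since that exponent is strictly less than $-1$, the lower endpoint dominates and you recover the same bound $C(t-r)^{d-\lfloor l+d\rfloor-1-\eps'}$ at the cost of the extra loss $\eps'$. What your route buys is that it bypasses the sign analysis entirely (it needs only the magnitude bound on the derivative plus continuity on compact subsets of $\tilde E$ away from the diagonal), making it more elementary and robust; the price is the additional $\eps'$ in the exponent, which you correctly neutralize by choosing $\eps'=\eps/2$ so that the total loss $3\eps/2$ stays strictly below the available $2\eps$ --- indeed your three-case analysis shows the limit is $0$ in every case, slightly stronger than the finiteness required. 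The only point worth stating explicitly is that the constant $K(\eps')$ on the whole strip $0<z-r\leq 2h$ is obtained by combining the near-diagonal bound from Lemma \ref{lemma-aux1}(a) with boundedness of the continuous derivative $F^{(\lfloor l+d\rfloor+1,1)}$ on the compact set where $z-r$ is bounded away from $0$; you indicate this, and it is legitimate since $F\in\mathbb{C}_{+}^{(\lfloor l+d\rfloor+2)}(E)$ guarantees continuity of that derivative on $\tilde E$.
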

\begin{proof}
By Definition \ref{smt.var2.0}, there exists $h\in(0,1)$ such that,
\be \label{mono0}
|f^{(k,0)}(t,r)|>|f^{(k,0)}(t+h,r)|, \ \ \forall r \in [t-h,t),\  k=0,1,...,\lfloor l +d \rfloor,
\ee
for $k\in \mathds{N}$ which satisfies $k\leq \frac{1}{2}(\lfloor l +d \rfloor +1)$ we have
\be \label{mono1}
f^{(2k,0)}(z,r)<0, \ \ \forall z-r \in (0,2h],
\ee
and for $k\in \mathds{N}$ which satisfies $k\leq \frac{1}{2}\lfloor l +d \rfloor $ we have
\be \label{mono2}
f^{(2k+1,0)}(z,r)>0, \ \ \forall z-r \in (0,2h].
\ee
From (\ref{mono0}--\ref{mono2}) we have
\bq \label{dd1}
\int_{t}^{t+h}|f^{(\lfloor l +d \rfloor+1,0)}(z,r)|dz
&\leq& |f^{(\lfloor l +d \rfloor,0)}(t+h,r)-f^{(\lfloor l +d \rfloor,0)}(t,r)|\nonumber \\
&\leq& 2|f^{(\lfloor l +d\rfloor,0)}(t,r)|.
\eq
From (\ref{dd1}), (\ref{mono1}) and  (\ref{mono2}) we have
\bq \label{dd2}
\int_{t-h}^{t-\dl}(t-r)^{l-\eps}\int_{t}^{t+h}|f^{(\lfloor l +d \rfloor+1,0)}(z,r)|dzdr &\leq& 2\int_{t-h}^{t-\dl}(t-r)^{l-\eps}|f^{(\lfloor l +d \rfloor,0)}(t,r)|dr
\nonumber \\
&= & 2\bigg|\int_{t-h}^{t-\dl}(t-r)^{l-\eps}f^{(\lfloor l +d \rfloor,0)}(t,r)dr \bigg|.
\eq
Then (\ref{conv22}) follows from (\ref{dd2}) and Lemma \ref{lemma-aux1}(a).
\end{proof}
\begin{lemma} \label{BR25}
Under the same assumption as in Lemma \ref{BR4},
for any $\eps \in (0,l)$, there exists a polynomial $P^4_t$ of degree at most $\lfloor l+d \rfloor$ and constants $C_{\ref{CR25}}=C_{\ref{CR25}}(\omega,t)>0$, $h_{\ref{CR25}}=h_{\ref{CR25}}(\omega,t)>0$ and $h\in(0,t)$, such that,
\bq \label{CR25}
&&\bigg|\int_{t-h}^{t-\dl}(f(u,r)-f(v,r))[X(r)-P_t(r)]dr-P^4_t(u)+P^4_t(v)\bigg| \nonumber \\ &&\leq C_{\ref{CR25}}F(u,v)(|u-t|^{l-\eps}+|v-t|^{l-\eps}), \ \  \forall u,v\in (t,t+h_{\ref{CR25}}),\ u-v=\dl>0, \ \ t\in S_l^X.
\eq
\end{lemma}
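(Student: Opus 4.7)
Proof plan. Let $N=\lfloor l+d\rfloor$ and, throughout, let $\eps>0$ be a small internal parameter; we derive an estimate of the form $CF(u,v)(u-t)^{l-c\eps}$ for a fixed constant $c$, and a final reparametrization $\eps\mapsto\eps/c$ recovers the statement of the lemma (the boundary case $l+d\in\mathbb{N}$ requires truncating the polynomial to degree $N-1$ and is handled analogously; assume $\eps<(l+d-N)/3$ otherwise). Define
\[
P^4_t(z)=\sum_{k=1}^{N}\frac{(z-t)^k}{k!}\int_{t-h}^{t}f^{(k,0)}(t,r)[X(r)-P_t(r)]dr.
\]
By smooth variation (Lemma \ref{lemma-aux1}(a)) one has $|f^{(k,0)}(t,r)|\leq C(t-r)^{d-1-k-\eps}$ for $r$ near $t$, and by Lemma \ref{bound} $|X(r)-P_t(r)|\leq C(t-r)^{l-\eps}$; the integrand decays like $(t-r)^{d-1-k+l-2\eps}$ and the integral converges for each $k\leq N$. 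The polynomial $P^4_t$ encodes the $\dl$-independent part of the Taylor expansion at $z=t$ of $G(z):=\int_{t-h}^{t-\dl}f(z,r)[X(r)-P_t(r)]dr$.

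Taylor-expanding $f(z,r)$ in $z$ around $z=t$ to order $N$ with integral remainder and subtracting $P^4_t(u)-P^4_t(v)$ yields the decomposition
\begin{eqnarray*}
&& \int_{t-h}^{t-\dl}(f(u,r)-f(v,r))[X(r)-P_t(r)]dr-P^4_t(u)+P^4_t(v) \\
&& \quad = -\sum_{k=1}^{N}\frac{(u-t)^k-(v-t)^k}{k!}\int_{t-\dl}^{t}f^{(k,0)}(t,r)[X(r)-P_t(r)]dr \\
&& \qquad +\int_{t-h}^{t-\dl}\Delta(u,v,r)[X(r)-P_t(r)]dr,
\end{eqnarray*}
where $\Delta(u,v,r)=\frac{1}{N!}\bigl[\int_t^u(u-z)^N f^{(N+1,0)}(z,r)dz-\int_t^v(v-z)^N f^{(N+1,0)}(z,r)dz\bigr]$. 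The inner integral in the first line is bounded by $C\dl^{d+l-k-2\eps}$ and $|(u-t)^k-(v-t)^k|\leq k\dl(u-t)^{k-1}$; since $\dl\leq u-t$, the sum is dominated by its $k=N$ term, of size at most $C\dl^{d+l-N-2\eps+1}(u-t)^{N-1}$.

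For the Taylor-remainder integral, write $\Delta(u,v,r)=\int_v^u R^{(1,0)}(z,r)dz$ with $R^{(1,0)}(z,r)=\frac{1}{(N-1)!}\int_t^z(z-w)^{N-1}f^{(N+1,0)}(w,r)dw$, and combine two complementary bounds
\[
|R^{(1,0)}(z,r)|\leq C(z-t)^{N}(t-r)^{d-N-2-\eps}\quad\mbox{and}\quad |R^{(1,0)}(z,r)|\leq C(z-t)^{N-1}(t-r)^{d-N-1-\eps},
\]
the second being a consequence of $\int_t^z|f^{(N+1,0)}(w,r)|dw\leq C|f^{(N,0)}(t,r)|\leq C(t-r)^{d-N-1-\eps}$, which is exactly the content of Lemma \ref{lem-conv}. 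Taking the minimum, splitting the $r$-integral at the threshold $t-r=u-t$ and substituting $s=t-r$ on each piece, both pieces contribute at most $C(u-t)^{l+d-N-2\eps}$ and hence
\[
\Bigl|\int_{t-h}^{t-\dl}\Delta(u,v,r)[X(r)-P_t(r)]dr\Bigr|\leq C\dl(u-t)^{l+d-1-2\eps}.
\]
This two-regime splitting is the main technical obstacle: either bound used alone introduces a factor $(u-t)/\dl$ that is unbounded as $\dl\dr 0$.

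Both contributions (polynomial tail and Taylor remainder) are now of size $C\dl(u-t)^{l+d-1-2\eps}$, once $\eps<(l-N+1)/3$, which is positive since $d<1$ forces $l>N-1$. To convert the prefactor $\dl$ into the gauge $F(u,v)$, invoke Lemma \ref{lemma-aux1}(b), which gives $\dl^{d+\eps}\leq CF(u,v)$ for $u,v$ close to $t$, and factor
\[
\dl(u-t)^{l+d-1-2\eps}=\dl^{d+\eps}\cdot\dl^{1-d-\eps}(u-t)^{l+d-1-2\eps}\leq CF(u,v)(u-t)^{l-3\eps},
\]
using $\dl\leq u-t$ and $1-d-\eps>0$. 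Replacing the internal $\eps$ by $\eps/3$ from the outset produces the bound $CF(u,v)(u-t)^{l-\eps}\leq CF(u,v)(|u-t|^{l-\eps}+|v-t|^{l-\eps})$, which is (\ref{CR25}).
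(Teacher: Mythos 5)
Your proof is correct and follows the same skeleton as the paper's: an order-$\lfloor l+d\rfloor$ Taylor expansion of $f(\cdot,r)$ at $z=t$, a polynomial built from the Taylor coefficients weighted against $X-P_t$, the pointwise bound of Lemma \ref{bound}, the smooth-variation decay of Lemma \ref{lemma-aux1}, the integrated derivative bound $\int_t^{t+h}|f^{(\lfloor l+d\rfloor+1,0)}(z,r)|dz\leq C|f^{(\lfloor l+d\rfloor,0)}(t,r)|$ which is the real content of Lemma \ref{lem-conv}, and the final conversion $\dl^{d+\eps}\leq CF(u,v)$ via Lemma \ref{lemma-aux1}(b). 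Two organizational choices differ, and both are worth noting. First, your $P^4_t$ integrates over $[t-h,t]$ and is therefore independent of $\dl=u-v$, whereas the paper's $P^4_t$ in (\ref{rf00}) integrates over $[t-h,t-\dl]$ and hence varies with the pair $(u,v)$ — your version matches the quantifier structure of the lemma (one polynomial for all $u,v$) more faithfully, at the price of the extra correction term $\sum_k\frac{(u-t)^k-(v-t)^k}{k!}\int_{t-\dl}^t f^{(k,0)}(t,r)[X(r)-P_t(r)]dr$, which you bound correctly. Second, for the Taylor remainder the paper keeps the factor $[(u-z)^{N+1}-(v-z)^{N+1}]$ intact, bounds it uniformly by $C\dl(u-t)^{N}$, and then applies the integrated bound in $z$; this yields the target estimate in one step, so the two-regime splitting of the $r$-integral at $t-r=u-t$ that you describe as the main obstacle is an artifact of your choice to differentiate the remainder in $z$ first (which costs one power of $z-t$ in the bound using $|f^{(N,0)}(t,r)|$). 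Your route is more laborious but sound; the paper's is shorter. Neither proof treats the boundary case $l+d\in\mathbb{N}$ carefully, which you at least flag explicitly.
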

\begin{proof}
Assume that $t\in S_l^X$ is fixed. Let $h$ be as in Lemma \ref{lem-conv} and $u,v\in (t,t+h)$. Since $[t-h,t]\subset [0,1]$ and $r\in[t-h,t]$, we can apply Lemma \ref{bound} and the definition of $S_l^X$ to see that for every $\varepsilon \in (0,l)$, there exists $C_{\ref{BoundX11}}(t,\omega)$ such that
\bq \label{BoundX11}
|X(r)-P_t(r)|&\leq& C_{\ref{BoundX11}}(t,\omega)|r-t|^{l-\varepsilon},
\ \forall r\in[t-h,t], \ \ t\in S_l^X.
\eq
Let
\bq \label{rf00}
P^4_t(z):=\sum_{k=0}^{\lfloor l +d \rfloor}\frac{(z-t)^k}{k!} \int_{t-h}^{t-\dl}f^{(k,0)}(t,r)[X(r)-P_t(r)]dr.
\eq
From (\ref{rf00}), (\ref{BoundX11}) and the Taylor Theorem we get
\bq \label{rf01}
&&\bigg|\int_{t-h}^{t-\dl}(f(u,r)-f(v,r))[X(r)-P_t(r)]dr-P^4_t(u)+P^4_t(v)\bigg|  \nonumber \\
&& =\frac{1}{(\lfloor l +d \rfloor+1)!}\bigg|\int_{t-h}^{t-\dl}\bigg[\int_{t}^{u}f^{(\lfloor l +d \rfloor+1,0)}(z,r)(u-z)^{\lfloor l +d \rfloor+1}dz \nonumber \\
&&-\int_{t}^{v}f^{(\lfloor l +d \rfloor+1,0)}(z,r)(v-z)^{\lfloor l +d \rfloor+1}dz\bigg][X(r)-P_t(r)]dr\bigg| \nonumber \\
&& =C\bigg|\int_{t-h}^{t-\dl}\bigg[\int_{t}^{v}f^{(\lfloor l +d \rfloor+1,0)}(z,r)[(u-z)^{\lfloor l +d \rfloor+1}-(v-z)^{\lfloor l +d \rfloor+1}]dz \nonumber \\
&&+\int_{v}^{u}f^{(\lfloor l +d \rfloor+1,0)}(z,r)(u-z)^{\lfloor l +d \rfloor+1}dz\bigg][X(r)-P_t(r)]dr\bigg| \nonumber \\
&& \leq C\bigg|\int_{t-h}^{t-\dl}\bigg|\int_{t}^{v}f^{(\lfloor l +d \rfloor+1,0)}(z,r)[(u-z)^{\lfloor l +d \rfloor+1}-(v-z)^{\lfloor l +d \rfloor+1}]dz \nonumber \\
&&+\int_{v}^{u}f^{(\lfloor l +d \rfloor+1,0)}(z,r)(u-z)^{\lfloor l +d \rfloor+1}dz\bigg|(t-r)^{l-\eps} dr,
\eq
where $C=C(t,\omega)$.
Recall that $v\in (t,t+h)$ then from Lemma \ref{lem-conv} we get
\bq \label{h-1}
&&\int_{t-h}^{t-\dl}\bigg|\int_{t}^{v}f^{(\lfloor l +d \rfloor+1,0)}(z,r)[(u-z)^{\lfloor l +d \rfloor+1}-(v-z)^{\lfloor l +d \rfloor+1}]dz\bigg|(t-r)^{l-\eps} dr \nonumber \\
&&\leq C(t)(u-v)(u-t)^{\lfloor l +d \rfloor}\int_{t-h}^{t-\dl}\int_{t}^{v}|f^{(\lfloor l +d \rfloor+1,0)}(z,r)|dz(t-r)^{l-\eps} dr \nonumber \\
&&\leq C(t)(u-v)^{d+\eps}(u-t)^{\lfloor l +d \rfloor+1-d-\eps}\int_{t-h}^{t-\dl}\int_{t}^{v}|f^{(\lfloor l +d \rfloor+1,0)}(z,r)|dz(t-r)^{l-\eps} dr \nonumber \\
&&\leq C(t)F(u,v)(u-t)^{l-3\eps},
\eq
where the last inequality follows from Lemma \ref{lemma-aux1}(b) and our choice of $u-v=\dl$.
Recall that $u\in (t,t+h)$. Apply Lemma \ref{lem-conv} and Lemma \ref{lemma-aux1}(b) again to get
\bq \label{h-4}
&&\int_{t-h}^{t-\dl}\int_{v}^{u}|f^{(\lfloor l +d \rfloor+1,0)}(z,r)|(u-z)^{\lfloor l +d \rfloor+1}dz(t-r)^{l-\eps} dr  \nonumber \\
&&\leq
\dl^{\lfloor l +d \rfloor+1}\int_{t-h}^{t-\dl}\int_{t}^{v}|f^{(\lfloor l +d \rfloor+1,0)}(z,r)|(t-r)^{l-\eps} dr\nonumber \\
&&\leq C(t)(u-v)^{d+l-2\eps}\nonumber \\
&&\leq C(t)(u-v)^{d+\eps}(u-v)^{l-3\eps}\nonumber \\
&&\leq C(t)F(u,v)(u-v)^{l-3\eps}.
\eq
Apply (\ref{h-1}) and (\ref{h-4}) on (\ref{rf01}) to get (\ref{CR25}).
\end{proof}
\begin{lemma} \label{BR26}
Under the same assumption as in Lemma \ref{BR4}, for any $\eps \in (0,l)$, there exist constants $C_{\ref{CR26}}=C_{\ref{CR26}}(\omega,t)>0$ and $h_{\ref{CR26}}=h_{\ref{CR26}}(\omega,t)>0$ such that for every $\eps>0$,
\bq \label{CR26}
&&\bigg|\int_{t-\dl}^{v}(f(u,r)-f(v,r))[X(r)-P_t(r)]dz\bigg|,  \nonumber \\ &&\leq C_{\ref{CR26}}F(u,v)(|u-t|^{l-\eps}+|v-t|^{l-\eps}), \ \ \forall u,v\in (t,t+h_{\ref{CR25}}), \  u-v=\dl>0.
\eq
\end{lemma}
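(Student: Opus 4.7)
The plan is to use the mean-centered decomposition
\[
\int_{t-\dl}^v (f(u,r) - f(v,r))\phi(r)\,dr \;=\; \phi(v)\,G \;+\; \int_{t-\dl}^v (f(u,r) - f(v,r))(\phi(r) - \phi(v))\,dr,
\]
where $\phi := X - P_t$ and $G := \int_{t-\dl}^v (f(u,r) - f(v,r))\,dr$. The point is that any pointwise bound on $|f(u,r) - f(v,r)|$ coming from Lemma~\ref{lemma-aux1}(a) only integrates to $\dl^{d-\eps}$, which exceeds $F(u,v) \leq C\dl^{d-\eps}$ by an unabsorbable factor $\dl^{-\eps}$; by contrast, the \emph{integrated} quantity $G$ can be evaluated in closed form and satisfies the sharp inequality $0 < G < F(u,v)$.

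Since $f = F^{(0,1)}$ and $F(v,v) = 0$, the fundamental theorem of calculus gives
\[
G \;=\; F(u,v) \;-\; \bigl[F(u,t-\dl) - F(v,t-\dl)\bigr].
\]
That $G > 0$ follows from $f(u,r) - f(v,r) = \int_v^u F^{(1,1)}(\xi,r)\,d\xi$ together with $F^{(1,1)} > 0$ on a neighborhood of the diagonal, a consequence of Definition~\ref{smt.var2.0}(c) with $j=2$ (whose limit is $-\rho(\rho-1) = d(1-d) > 0$); for $h_{\ref{CR26}}$ small enough this covers the relevant rectangle. The upper bound $G < F(u,v)$ is similar: $F(u,t-\dl) - F(v,t-\dl) = \int_v^u F^{(1,0)}(\xi,t-\dl)\,d\xi > 0$ by Definition~\ref{smt.var2.0}(b). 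Lemma~\ref{bound}, applied with $P_t$ normalised so that $\phi(t) = 0$, yields $|\phi(v)| \leq C|v-t|^{l-\eps}$, and therefore $|\phi(v) G| \leq C F(u,v)|v-t|^{l-\eps}$.

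For the remaining term, positivity of $f(u,r) - f(v,r)$ gives the crucial identity $\int_{t-\dl}^v |f(u,r)-f(v,r)|\,dr = G$, so the elementary inequality $|\int g h\,dr| \leq \|h\|_\infty \int|g|\,dr$ applied with $g = f(u,\cdot) - f(v,\cdot)$ and $h = \phi - \phi(v)$ produces
\[
\Bigl|\int_{t-\dl}^v (f(u,r) - f(v,r))(\phi(r) - \phi(v))\,dr\Bigr| \;\leq\; G \cdot \sup_{r \in [t-\dl,v]}|\phi(r) - \phi(v)|.
\]
Since $|r-t| \leq \max(\dl, v-t) \leq u-t$ throughout the range, Lemma~\ref{bound} gives $\sup_{r}|\phi(r) - \phi(v)| \leq 2C(u-t)^{l-\eps}$, so this contribution is bounded by $2C F(u,v)(u-t)^{l-\eps}$. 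Summing the two pieces produces the claim.

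The main obstacle is recognising that every pointwise bound on $|f(u,r) - f(v,r)|$ derivable from the smooth-variation axioms -- whether via $|f| \leq C(\cdot)^{d-1-\eps}$, via the mean value theorem and $|F^{(1,1)}(\xi,r)| \leq C(\xi-r)^{d-2-\eps}$, or via the telescoping form $C[(v-r)^{d-1-\eps} - (u-r)^{d-1-\eps}]$ -- leaks a factor $\dl^{-\eps}$ on integration and so cannot reach $F(u,v)$. The structural device that saves the argument is the exact evaluation of $G$ as a difference of three $F$-values combined with the positivity of $F^{(1,0)}$ and $F^{(1,1)}$ near the diagonal, which simultaneously identifies $\int|f(u,\cdot)-f(v,\cdot)|\,dr$ with the signed integral $G$ and bounds $G$ by $F(u,v)$ without any slowly-varying correction.
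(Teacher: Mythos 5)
Your argument is correct and is essentially the paper's own proof: the paper likewise pulls out the sup of $|X-P_t|$ via Lemma \ref{bound}, uses the smooth-variation axioms to get $f(u,r)>f(v,r)$ near the diagonal, and integrates exactly to identify $\int_{t-\dl}^{v}|f(u,r)-f(v,r)|\,dr$ with $F(u,v)-F(u,t-\dl)+F(v,t-\dl)\leq F(u,v)$. Your preliminary mean-centering around $\phi(v)$ is harmless but superfluous, since the uniform bound $\sup_{r\in[t-\dl,v]}|\phi(r)|\leq C(u-t)^{l-\eps}$ already suffices once $G\leq F(u,v)$ is in hand.
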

\begin{proof}
Assume that $t\in S_l^X$ is fixed.
Since $r-t \in[0,1]$, we can apply Lemma \ref{bound} and the definition of $S_l^X$ to see that for every $\varepsilon \in (0,l)$, there exists $C_{\ref{BoundX1}}(t,\omega)$ such that
\bq \label{BoundX1}
|X(r)-P_t(r)|\leq C_{\ref{BoundX1}}(t,\omega)|r-t|^{l-\varepsilon},
\ \forall r\in[0,1],  \ t\in S_l^X.
\eq
From (\ref{BoundX1}) we have
\bq \label{derivativer1}
&&\bigg|\int_{t-\dl}^{v}(f(u,r)-f(v,r))[X(r)-P_t(r)]dr\bigg|    \leq |v-t+\dl|^{l-\varepsilon}\int_{t}^{v}|f(v+\dl,r)-f(v,r)|dr.  \\ \nonumber
\eq
By Definition \ref{smt.var2.0}, there exists $\bar h>0$ such that for every $v,r,\dl \in [0,1]$ satisfying $v+\dl-r \in (0,\bar h)$, we have
\be \label{mono}
f(v+\dl,r)>f(v,r).
\ee
Recall that $u=v+\dl$. From (\ref{derivativer1}) and (\ref{mono}) and direct integration we have
\bq \label{derivativer2}
&& \bigg|\int_{t-\dl}^{v}(f(u,r)-f(v,r))[X(r)-P_t(r)]dr\bigg| \nonumber  \\
&& \leq |v+\dl-t|^{l-\varepsilon}[F(v+\dl,v)-F(v,v)-F(v+\dl,t-\dl)+F(v,t-\dl)],\ \forall v+2\dl-t\in (0,\bar h).
\eq
Recall that $F(v,v)=0$. We get from (\ref{derivativer2}) and Definition \ref{smt.var2.0}, that there exists $h\in(0,\bar h)$ such that
\begin{eqnarray} \label{derivativer222}
\frac{1}{F(v+\dl,v)}\bigg|\int_{t}^{v}(f(u,r)-f(v,r))[X(r)-P_t(r)]dr\bigg|\leq C|v-t|^{l-\varepsilon}, \ \  \forall u,v\in B(t, h), \ u-v=\dl>0.
\end{eqnarray}
and we are done.

\end{proof}

\paragraph{Proof of Lemma \ref{BR4}} \label{Sec-BR4}
Take $P^2_t=P^3_t+P^4_t$ and the proof of Lemma follows immediately from Lemmas \ref{BR28}, \ref{BR25} and \ref{BR26}.

\section{Acknowledgments}
This paper was written during my Ph.D. studies under
the supervision of Professor L. Mytnik. I am grateful to him for his guidance
and numerous helpful conversations during the preparation of this work.
I also thank an anonymous referee for the careful reading of the manuscript,
and for a number of useful comments and suggestions that improved the exposition.

\bibliographystyle{plain}
\printindex


\end{document}